\numberwithin{equation}{section}
\numberwithin{figure}{section}
\theoremstyle{plain}
\newtheorem{thm}{\protect\theoremname}[section]
  \theoremstyle{plain}
  \newtheorem{prop}[thm]{\protect\propositionname}
  \theoremstyle{plain}
  \newtheorem{conjecture}[thm]{\protect\conjecturename}
  \theoremstyle{remark}
  \newtheorem{rem}[thm]{\protect\remarkname}
  \theoremstyle{plain}
  \newtheorem{lem}[thm]{\protect\lemmaname}
  \theoremstyle{plain}
  \newtheorem{cor}[thm]{\protect\corollaryname}
\DeclareMathOperator{\diag}{diag}
\DeclareMathOperator{\Tr}{Tr}
\DeclareMathOperator{\End}{End}
\DeclareMathOperator{\rank}{rank}
\providecommand{\abs}[1]{\lvert#1\rvert}
\providecommand{\Id}{\mathbbm{1}}
  \providecommand{\conjecturename}{Conjecture}
  \providecommand{\corollaryname}{Corollary}
  \providecommand{\lemmaname}{Lemma}
  \providecommand{\propositionname}{Proposition}
  \providecommand{\remarkname}{Remark}
\providecommand{\theoremname}{Theorem}
\begin{document}

\subjclass[2000]{53D17,13F60}

\keywords{Poisson--Lie group, cluster algebra, Belavin--Drinfeld triple}

\title[Cluster structures with minimal BD data, I]{Exotic Cluster Structures on $SL_{n}$ with Belavin--Drinfeld Data
of Minimal Size, I. The Structure}

\date{February, 2016}

\author{Idan Eisner}
\address{Department of Mathematics, 
         University of Haifa,\footnote{Current affiliation: Department of Mathematics, Technion - Israel Institute of Technology, Haifa, 32000, Israel}\\
         Haifa, 3498838, Israel}

\email{idaneisner@technion.ac.il}
\begin{abstract}
Abstract. Using the notion of compatibility between Poisson brackets and
cluster structures in the coordinate rings of simple Lie groups, 
Gekhtman Shapiro and Vainshtein conjectured the existence 
of a cluster structure for each Belavin--Drinfeld solution of the 
classical Yang-Baxter equation compatible with the corresponding 
Poisson-Lie bracket on the simple Lie group.
Poisson Lie groups are classified by the Belavin--Drinfeld classification
of solutions to the classical Yang Baxter equation. For any non trivial
Belavin--Drinfeld data of minimal size for $SL_{n}$, we give an algorithm
for constructing an initial seed $\Sigma$ in $\mathcal{O}\left(SL_{n}\right)$.
The cluster structure $\mathcal{C}=\mathcal{C}\left(\Sigma\right)$
is then proved to be compatible with the Poisson bracket associated
with that Belavin--Drinfeld data, and the seed $\Sigma$ is  locally regular.

This is the first of two papers, and the second one proves the rest
of the conjecture: the upper cluster algebra $\overline{\mathcal{A}}_{\mathbb{C}}(\mathcal{C})$
is naturally isomorphic to $\mathcal{O}\left(SL_{n}\right)$, and
the correspondence between Belavin--Drinfeld classes and cluster structures
is one to one. 
\end{abstract}

\maketitle
\maketitle

\section{Introduction}
Upon the introduction of cluster algebras in \cite{FZ1}, 
a natural question has arisen: do (multiple) cluster structures 
in the coordinate rings of a given algebraic variety $V$ exist?
Partial answers were given for Grassmannians
$V=Gr_{k}\left(n\right)$ \cite{Scott} and double Bruhat cells \cite{BFZ}.
If $V=\mathcal{G}$ is a simple Lie group, one can extend the cluster
structure found in the double Bruhat cell to one in $\mathcal{O}\left(\mathcal{G}\right)$.
The compatibility of cluster structures and Poisson brackets, as characterized
in \cite{GSV1} suggested a connection between the two: given a Poisson
bracket, does a compatible cluster structure exist? Is there a way
to find it?

In the case that $V=\mathcal{G}$ is a simple complex Lie group, R-matrix
Poisson brackets on $\mathcal{G}$ are classified by the Belavin--Drinfeld
classification of solutions to the classical Yang Baxter equation
\cite{BDsolCYBE}. Given a solution of that kind, a Poisson bracket
can be defined on $\mathcal{G}$, making it a Poisson -- Lie group.

The Belavin--Drinfeld (BD) classification is based on pairs of isometric
subsets of simple roots of the Lie algebra $\mathfrak{g}$ of $\mathcal{G}$.
The trivial case when the subsets are empty corresponds to the standard
Poisson bracket on $\mathcal{G}$ . It has been shown in \cite{gekhtman2012cluster}
that extending the cluster structure introduced in \cite{BFZ} from
the double Bruhat cell to the whole Lie group $V$ yields a cluster
structure that is compatible with the standard Poisson bracket. This
led to naming this cluster structure ``standard'', and trying to
find other cluster structures, compatible with brackets associated
with non trivial BD subsets. The term ``exotic'' was suggested for
these non standard structures \cite{gekhtman2013arxiv}.

Gekhtman, Shapiro and Vainshtein conjectured the existence of a corresponding
cluster structure for every BD class for a given simple Lie group
\cite{gekhtman2012cluster,gekhtman2013exotic}. According to the conjecture, 
for a given BD class for $\mathcal{G}$,
there exists a cluster structure on $\mathcal{G}$, with rank determined
by the BD data. This cluster structure is compatible with the associated
Poisson bracket. The conjecture also states that the structure is
regular, and that the upper cluster algebra coincides with the ring
of regular functions on $\mathcal{G}$. The conjecture was proved
for the standard case and for $\mathcal{G}=SL_{n}$ with $n<5$ in
\cite{gekhtman2012cluster}. The Cremmer -- Gervais case, which in
some sense is the ``furthest'' from the standard one, was proved
in \cite{gekhtman2013exotic}. It was also found to be true for all
possible BD classes for $SL_{5}$ \cite{eisner2014SL5}.

This paper proves parts of the conjecture for $SL_{n}$ when the BD
data is of minimal size, i.e., the two subsets contain only one simple
root. Starting with two such subsets $\left\{ \alpha\right\} $ and
$\left\{ \beta\right\} $, Section \ref{sub:Constructing-a-log} describes
an algorithm for construction of a set $\mathcal{B}_{\alpha\beta}$
of functions that will serve as the initial cluster. It is then proved
that this set is \emph{log canonical }with respect to the associated
Poisson bracket $\left\{ \cdot,\cdot\right\} _{\alpha\beta}$. Adding
a quiver $Q_{\alpha\beta}$ (or an exchange matrix $\tilde{B}_{\alpha\beta}$)
defines a cluster structure on $SL_{n}$. It is shown in Section \ref{sec:The-cluster-structure}
that this structure is indeed compatible with the Poisson bracket.
Then Section \ref{sec:Regularity} proves that the initial seed is
locally regular.

This proves that for minimal size BD data for $SL_{n}$ there exists
a cluster structure which is compatible with the associated Poisson
bracket. The companion paper \cite{eisner2014part2} will complete
the proof of the conjecture: the bijection between cluster structures
and BD classes of this type, the fact that the upper cluster algebra
is naturally isomorphic to the ring of regular functions on $SL_{n}$,
and the description of a global toric action.

\section{Background and main results}

\subsection{Cluster structures}

Let $\{z_{1},\ldots,z_{m}\}$ be a set of independent variables, and
let $S$ denote the ring of Laurent polynomials generated by $z_{1},\ldots,z_{m}$
- 
\[
S=\mathbb{Z}\left[z_{1}^{\pm1},\ldots,z_{m}^{\pm1}\right].
\]
 (Here and in what follows $z^{\pm1}$ stands for $z,z^{-1}$). The
\emph{ambient field} $\mathcal{F}$ is the field of rational functions
in $n$ independent variables (distinct from $z_{1},\ldots,z_{m}$),
with coefficients in the field of fractions of $S$.

A \emph{seed} (of geometric type) is a pair $(\textbf{x},\tilde{B})$,
where $\textbf{x}=(x_{1},\ldots,x_{n})$ is a transcendence basis
of $\mathcal{F}$ over the field of fractions of $S$, and $\tilde{B}$
is an $n\times(n+m)$ integer matrix whose principal part $B$ (that
is, the $n\times n$ matrix formed by columns $1\ldots n$) is skew-symmetric.
The set $\textbf{x}$ is called a \emph{cluster}, and its elements
$(x_{1},\ldots,x_{n})$ are called \emph{cluster variables}. Set $x_{n+i}=z_{i}$
for $i\in[1,m]$ (where we use the notation $[a,b]$ for the set of
integers $\left\{ a,a+1,\ldots,b\right\} .$ Sometimes we write just
$\left[m\right]$ for the set $\left[1,m\right]$). The elements $x_{n+1},\ldots,x_{n+m}$
are called \emph{stable variables} (or \emph{frozen variables}). The
set $\tilde{\textbf{x}}=(x_{1},\ldots,x_{n},x_{n+1},\ldots,x_{n+m})$
is called an \emph{extended cluster}. The square matrix $B$ is called
the \emph{exchange matrix}, and $\tilde{B}$ is called the \emph{extended
exchange matrix}. We sometimes denote the entries of $\tilde{B}$
by $b_{ij}$, or say that $\tilde{B}$ is skew-symmetric when the
matrix $B$ has this property.

Let $\Sigma=(\tilde{\mathbf{x}},\tilde{B})$ be a seed. The adjacent
cluster in direction $k\in\left[n\right]$ is $\tilde{\mathbf{x}}_{k}=(\tilde{\mathbf{x}}\setminus x_{k}\cup\left\{ x'_{k}\right\} $,
where $x'_{k}$ is defined by the \emph{exchange relation 
\begin{equation}
x_{k}\cdot x'_{k}=\prod_{b_{kj}>0}x_{j}^{b_{kj}}+\prod_{b_{kj}<0}x_{j}^{-b_{kj}}\label{eq:exrltn}
\end{equation}
 } \emph{A matrix mutation} $\mu_{k}(\tilde{B})$\emph{
}of $\tilde{B}$ in direction $k$ is defined by 
\[
b'_{ij}=\begin{cases}
-b_{ij} & \text{ if }i=k\text{ or }j=k\\
b_{ij}+\frac{1}{2}\left(\abs{b_{ik}}b_{kj}+b_{ik}\abs{b_{kj}}\right) & \text{ otherwise.}
\end{cases}
\]
 Seed mutation in direction $k$ is then defined $\mu_{k}\left(\Sigma\right)=(\tilde{\mathbf{x}}_{k},\mu_{k}(\tilde{B})).$

Two seeds are said to be mutation equivalent if they can be connected
by a sequence of seed mutations.

Given a seed $\Sigma=(\textbf{\ensuremath{\mathbf{x}}},\tilde{B})$,
the \emph{cluster structure} $\mathcal{C}(\Sigma)$ (sometimes denoted
$\mathcal{C}(\tilde{B})$, if $\mathbf{\mathbf{x}}$ is understood
from the context) is the set of all seeds that are mutation equivalent
to $\Sigma$. The number $n$ of rows in the matrix $\tilde{B}$ is
called the \emph{rank} of $\mathcal{C}$.

Let $\Sigma$ be a seed as above, and $\mathbb{A}=\mathbb{Z}\left[x_{n+1},\ldots,x_{n+m}\right]$.
The \emph{cluster algebra} $\mathcal{A}=\mathcal{A}(\mathcal{C})=\mathcal{A}(\tilde{B})$
associated with the seed $\Sigma$ is the $\mathbb{A}$-subalgebra
of $\mathcal{F}$ generated by all cluster variables in all seeds
in $\mathcal{C}(\tilde{B})$. The \emph{upper cluster algebra }$\mathcal{\overline{A}}=\mathcal{\overline{A}}(\mathcal{C})=\mathcal{\overline{A}}(\tilde{B})$\emph{
}is the intersection of the rings of Laurent polynomials over $\mathbb{A}$
in cluster variables taken over all seeds in $\mathcal{C}(\tilde{B})$.
The famous \emph{Laurent phenomenon} \cite{FZ2} claims the inclusion
$\mathcal{A}(\mathcal{C})\subseteq\mathcal{\overline{A}}(\mathcal{C})$.

It is sometimes convenient to describe a cluster structure $\mathcal{C}(\tilde{B})$
in terms of the \emph{quiver} $Q(\tilde{B}):$ it is a directed graph
with $n+m$ nodes labeled $x_{1},\ldots,x_{n+m}$ (or just $1,\ldots,n+m$),
and an arrow pointing from $x_{i}$ to $x_{j}$ with weight $b_{ij}$
if $b_{ij}>0$.

Let $V$ be a quasi-affine variety over $\mathbb{C}$, $\mathbb{C}\left(V\right)$
be the field of rational functions on $V$, and $\mathcal{O}\left(V\right)$
be the ring of regular functions on $V$. Let $\mathcal{C}$ be a
cluster structure in $\mathcal{F}$ as above, and assume that $\left\{ f_{1},\ldots,f_{n+m}\right\} $
is a transcendence basis of $\mathbb{C}\left(V\right)$. Then the
map $\varphi:x_{i}\to f_{i}$, $1\leq i\leq n+m$, can be extended
to a field isomorphism $\varphi:\mathcal{F}_{\mathbb{C}}\to\mathbb{C}(V)$.
with $\mathcal{F}_{\mathbb{C}}=\mathcal{F}\otimes\mathbb{C}$ obtained
from $\mathcal{F}$ by extension of scalars. The pair$\left(\mathcal{C},\varphi\right)$
is then called a cluster structure in $\mathbb{C}\left(V\right)$
(or just a cluster structure on $V$), and the set $\left\{ f_{1},\ldots,f_{n+m}\right\} $
is called an extended cluster in $\left(\mathcal{C},\varphi\right)$.
Sometimes we omit direct indication of $\varphi$ and just say that
$C$ is a cluster structure on $V$. A cluster structure $\left(\mathcal{C},\varphi\right)$
is called \emph{regular} if $\varphi\left(x\right)$ is a regular
function for any cluster variable $x$, and a seed $\Sigma$ is called
\emph{locally regular }if all the cluster variables in $\Sigma$ and
in all the adjacent seeds are regular functions. The two algebras
defined above have their counterparts in $\mathcal{F}_{\mathbb{C}}$
obtained by extension of scalars; they are denoted $\mathcal{A}_{\mathbb{C}}$
and $\overline{\mathcal{A}}_{\mathbb{C}}$. If, moreover, the field
isomorphism $\varphi$ can be restricted to an isomorphism of $\mathcal{A}_{\mathbb{C}}$
(or $\overline{\mathcal{A}}_{\mathbb{C}}$) and $\mathcal{O}\left(V\right)$,
we say that $\mathcal{A}_{\mathbb{C}}$ (or $\overline{\mathcal{A}}_{\mathbb{C}}$)
is \emph{naturally isomorphic} to $\mathcal{O}\left(V\right)$.

Let $\left\{ \cdot,\cdot\right\} $ be a Poisson bracket on the ambient
field $\mathcal{F}$. Two elements $f_{1},f_{2}\in\mathcal{F}$ are
\emph{log canonical }if there exists a rational number $\omega_{f_{1},f_{2}}$
such that $\left\{ f_{1},f_{2}\right\} =\omega_{f_{1},f_{2}}f_{1}f_{2}$.
A set $F\subseteq\mathcal{F}$ is called a log canonical set if every
pair $f_{1},f_{2}\in F$ is log canonical.

A cluster structure $\mathcal{C}$ in $\mathcal{F}$ is said to be
compatible with the Poisson bracket $\left\{ \cdot,\cdot\right\} $
if every cluster is a log canonical set with respect to $\left\{ \cdot,\cdot\right\} $.
In other words, for every cluster $\mathbf{x}$ and every two cluster
variables $x_{i},x_{j}\in\mathbf{\tilde{x}}$ there exists $\omega_{ij}$
s.t. 
\begin{equation}
\left\{ x_{i},x_{j}\right\} =\omega_{ij}x_{i}x_{j}
\end{equation}

The skew symmetric matrix $\Omega^{\mathbf{\tilde{x}}}=(\omega_{ij})$
is called the \emph{coefficient matrix} of $\left\{ \cdot,\cdot\right\} $
(in the basis $\mathbf{\tilde{x}}$).

If $\mathcal{C}(\tilde{B})$ is a cluster structure of maximal rank
(i.e., $\rank\tilde{B}=n$), one can give a complete characterization
of all Poisson brackets compatible $\mathcal{C}(\tilde{B})$ (see
\cite{GSV1}, and also \cite[Ch. 4]{GSV}). In particular, an immediate
corollary of Theorem 1.4 in \cite{GSV1} is:
\begin{prop}
\label{prop:PoissCompStruc}If $\rank\tilde{B}=n$ then a Poisson
bracket is compatible with $\mathcal{C}(\tilde{B})$ if and only if
its coefficient matrix $\Omega^{\tilde{\mathbf{x}}}$ satisfies $\tilde{B}\Omega^{\tilde{\mathbf{x}}}=\left[D\ 0\right]$,
where $D$ is a diagonal matrix. 
\end{prop}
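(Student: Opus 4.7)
The plan is to derive both directions from the same single-mutation analysis and then, for sufficiency, propagate to the whole mutation class by induction. Fix the seed $\Sigma = (\tilde{\mathbf x}, \tilde B)$ with $\tilde{\mathbf x}$ log-canonical and coefficient matrix $\Omega = \Omega^{\tilde{\mathbf x}}$, and consider mutation at some $k \in [1,n]$. Writing the exchange relation as $x_k x'_k = M_1 + M_2$ with $M_1 = \prod_{b_{kj}>0} x_j^{b_{kj}}$ and $M_2 = \prod_{b_{kj}<0} x_j^{-b_{kj}}$, the Leibniz rule combined with $\{x_j,x_i\} = \omega_{ji}x_jx_i$ expresses $\{M_r, x_i\}/(M_r x_i)$ as a linear combination of the $\omega_{ji}$ whose coefficients are $\max(\pm b_{kj},0)$.

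For $\{x'_k, x_i\}$ to be a log-canonical multiple of $x'_k x_i$ for every $i \neq k$, the two log-derivatives for $r=1,2$ must coincide, since $M_1$ and $M_2$ are distinct monomials; after subtraction this reduces to $\sum_j b_{kj}\omega_{ji} = 0$, that is $(\tilde B\Omega)_{ki}=0$ for all $i\neq k$. This condition is both necessary and sufficient for the cluster adjacent to $\tilde{\mathbf x}$ in direction $k$ to be log-canonical. Letting $k$ range over $[1,n]$ yields the matrix identity $\tilde B\Omega = [D\ 0]$ with $D$ diagonal, which proves necessity and also produces log-canonicity of every cluster adjacent to $\Sigma$. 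Compatibility is then obtained by propagation: I would show that the condition is preserved under mutation by computing the transformation $\Omega \mapsto \Omega'$ induced on the mutated cluster (derived from the exchange relation together with the log-canonicity just established) and verifying $\mu_k(\tilde B)\,\Omega' = [D'\ 0]$ for some diagonal $D'$. An induction on the length of the mutation sequence then delivers log-canonicity of every cluster in $\mathcal{C}(\tilde B)$.

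The main obstacle is this propagation step, where the hypothesis $\rank\tilde B = n$ is essential. Full rank pins $\Omega$ down up to a free $m\times m$ antisymmetric block and makes $D$ uniquely determined by $\Omega$; this rigidity is what allows the preservation check to reduce to a clean matrix identity. Without maximal rank, the compatibility condition is no longer captured by such a simple linear equation, and a finer analysis of the space of compatible Poisson brackets is needed, which is precisely what Theorem 1.4 of \cite{GSV1} provides. Granting that theorem, the reformulation above makes the proposition an immediate corollary.
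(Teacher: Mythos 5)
The paper itself does not prove this proposition: it is quoted as an immediate corollary of Theorem 1.4 of \cite{GSV1} (see also \cite[Ch.~4]{GSV}), so your closing sentence, which grants that theorem and deduces the proposition from it, is exactly the paper's route. Your first paragraph is also essentially the argument that lies behind the cited theorem: applying the Leibniz rule to $x_kx'_k=M_1+M_2$ and comparing the coefficients of the two distinct monomials $M_1x_i$ and $M_2x_i$ (legitimate because the extended cluster is a transcendence basis) forces the two log-derivatives to agree, i.e.\ $(\tilde B\Omega)_{ki}=0$ for $i\neq k$, which gives necessity and one-step sufficiency. One omission there: the place where $\rank\tilde B=n$ actually enters this part is to guarantee that no row of $\tilde B$ vanishes, so that $M_1\neq M_2$ and the monomial comparison makes sense; you never say this.

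As a self-contained proof, however, the sufficiency half has a genuine gap. The propagation step is only announced (``I would show that the condition is preserved under mutation''): you neither compute the new coefficient matrix (one gets $\omega'_{ki}=\sum_j[b_{kj}]_+\,\omega_{ji}-\omega_{ki}$ and $\omega'_{ij}=\omega_{ij}$ for $i,j\neq k$) nor verify $\mu_k(\tilde B)\,\Omega'=[D'\ 0]$, and that verification is the entire content of the ``if'' direction. Moreover, the reason you give for why full rank is ``essential'' there is not an argument: $D$ is read off as the diagonal part of $\tilde B\Omega$ whether or not $\tilde B$ has full rank, and the fact that $\Omega$ is determined up to an $m\times m$ antisymmetric block plays no role in checking the preservation identity, which, once written out, is a direct computation from the seed relation, the skew-symmetry of the principal part and the antisymmetry of $\Omega$ (for instance, the relation already forces $d_ib_{ij}=d_jb_{ij}$, since $\tilde B\Omega\tilde B^{T}=DB^{T}$ must be antisymmetric). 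So the middle paragraph neither carries out the needed computation nor correctly locates where the rank hypothesis is used. Either do that computation explicitly, or do what the paper does and simply invoke Theorem 1.4 of \cite{GSV1}; in the latter case the sketch preceding the citation is doing no logical work.
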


\subsection{Poisson--Lie groups}

A Lie group $\mathcal{G}$ with a Poisson bracket $\{\cdot,\cdot\}$
is called a \emph{Poisson--Lie group} if the multiplication map $\mu:\mathcal{G}\times\mathcal{G}\to\mathcal{G}$,
$\mu:(x,y)\mapsto xy$ is Poisson. That is, $\mathcal{G}$ with a
Poisson bracket $\{\cdot,\cdot\}$ is a Poisson--Lie group if 
\[
\{f_{1},f_{2}\}(xy)=\{\rho_{y}f_{1},\rho_{y}f_{2}\}(x)+\{\lambda_{x}f_{1},\lambda_{x}f_{2}\}(y),
\]
 where $\rho_{y}$ and $\lambda_{x}$ are, respectively, right and
left translation operators on $\mathcal{G}$.

Given a Lie group $\mathcal{G}$ with a Lie algebra $\mathfrak{g}$,
let $(\ ,\ )$ be a nondegenerate bilinear form on $\mathfrak{g}$,
and $\mathfrak{t}\in\mathfrak{g}\otimes\mathfrak{g}$ be the corresponding
Casimir element. For an element $r=\sum_{i}a_{i}\otimes b_{i}\in\mathfrak{g}\otimes\mathfrak{g}$
denote 
\[
\left[\left[r,r\right]\right]=\sum_{i,j}\left[a_{i},a_{j}\right]\otimes b_{i}\otimes b_{j}+\sum_{i,j}a_{i}\otimes\left[b_{i},a_{j}\right]\otimes b_{j}+\sum_{i,j}a_{i}\otimes a_{j}\otimes\left[b_{i},b_{j}\right]
\]
 and $r^{21}=\sum_{i}b_{i}\otimes a_{i}$.

The \emph{Classical Yang--Baxter equation (CYBE) }is 
\begin{equation}
\left[\left[r,r\right]\right]=0,\label{eq:CYBE}
\end{equation}
 an element $r\in\mathfrak{g}\otimes\mathfrak{g}$ that satisfies
\eqref{eq:CYBE} together with the condition 
\begin{equation}
r+r^{21}=\mathfrak{t}
\end{equation}
 is called a classical R-matrix.

A classical R-matrix $r$ induces a Poisson-Lie structure on $\mathcal{G}$:
choose a basis $\left\{ I_{\alpha}\right\} $ in $\mathfrak{g}$,
and denote by $\partial_{\alpha}$ (resp., $\partial'_{\alpha}$)
the left (resp., right) invariant vector field whose value at the
unit element is $I_{\alpha}$. Let $r=\sum_{\alpha,\beta}r_{\alpha,\beta}I_{\alpha}\otimes I_{\beta}$,
then 
\begin{equation}
\{f_{1},f_{2}\}_{r}=\sum_{\alpha,\beta}r_{\alpha,\beta}\left(\partial_{\alpha}f_{1}\partial_{\beta}f_{2}-\partial_{\alpha}^{\prime}f_{1}\partial_{\beta}^{\prime}f_{2}\right)\label{eq:sklnPB}
\end{equation}
 defines a Poisson bracket on $\mathcal{G}$. This is called the \emph{Sklyanin
bracket} corresponding to $r$.

In \cite{BDsolCYBE} Belavin and Drinfeld give a classification of
classical R-matrices for simple complex Lie groups: let $\mathfrak{g}$
be a simple complex Lie algebra with a fixed nondegenerate invariant
symmetric bilinear form $(\ ,\ )$. Fix a Cartan subalgebra $\mathfrak{h}$,
a root system $\Phi$ of $\mathfrak{g}$, and a set of positive roots
$\Phi^{+}$. Let $\Delta\subseteq\Phi^{+}$ be a set of positive simple
roots.

A Belavin--Drinfeld (BD) triple is two subsets $\Gamma_{1},\Gamma_{2}\subset\Delta$
and an isometry $\gamma:\Gamma_{1}\to\Gamma_{2}$ with the following
property: for every $\alpha\in\Gamma_{1}$ there exists $m\in\mathbb{N}$
such that $\gamma^{j}(\alpha)\in\Gamma_{1}$ for $j=0,\ldots,m-1$,
but $\gamma^{m}(\alpha)\notin\Gamma_{1}$. The isometry $\gamma$
extends in a natural way to a map between root systems generated by
$\Gamma_{1},\Gamma_{2}$. This allows one to define a partial ordering
on the root system: $\alpha\prec\beta$ if $\beta=\gamma^{j}\left(\alpha\right)$
for some $j\in\mathbb{N}$.

Select now root vectors $E_{\alpha}\in\mathfrak{g}$ that satisfy
$\left(E_{\alpha},E_{-\alpha}\right)=1$. According to the Belavin--Drinfeld
classification, the following is true (see, e.g., \cite[Ch. 3]{chriprsly}). 
\begin{prop}
(i) Every classical R-matrix is equivalent (up to an action of $\sigma\otimes\sigma$
where $\sigma$ is an automorphism of $\mathfrak{g}$) to 
\begin{equation}
r=r_{0}+\sum_{\alpha\in\Phi^{+}}E_{-\alpha}\otimes E_{\alpha}+\sum_{\begin{subarray}{c}
\alpha\prec\beta\\
\alpha,\beta\in\Phi^{+}
\end{subarray}}E_{-\alpha}\wedge E_{\beta}\label{eq:RmtxCons}
\end{equation}
 (ii) $r_{0}\in\mathfrak{h}\otimes\mathfrak{h}$ in \eqref{eq:RmtxCons}
satisfies 
\begin{equation}
\left(\gamma\left(\alpha\right)\otimes\Id\right)r_{0}+\left(\Id\otimes\alpha\right)r_{0}=0\label{eq:r0Cond1}
\end{equation}
 for any $\alpha\in\Gamma_{1}$, and 
\begin{equation}
r_{0}+r_{0}^{21}=\mathfrak{t}_{0},\label{eq:r0Cond2}
\end{equation}
 where $\mathfrak{t}_{0}$ is the $\mathfrak{h}\otimes\mathfrak{h}$
component of $\mathfrak{t}$. \\
(iii) Solutions $r_{0}$ to \eqref{eq:r0Cond1},\eqref{eq:r0Cond2}
form a linear space of dimension $k_{T}=\left|\Delta\setminus\Gamma_{1}\right|$. 
\end{prop}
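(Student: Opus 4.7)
The plan is to analyze \eqref{eq:CYBE} together with the normalization $r+r^{21}=\mathfrak{t}$ by decomposing everything with respect to the root space grading $\mathfrak{g}=\mathfrak{h}\oplus\bigoplus_{\alpha\in\Phi}\mathfrak{g}_\alpha$. First write
\[
r = r_0 + r_{\text{mix}} + \sum_{\alpha,\beta \in \Phi} c_{\alpha,\beta}\, E_\alpha \otimes E_\beta,
\]
where $r_0\in\mathfrak{h}\otimes\mathfrak{h}$ and $r_{\text{mix}}$ is the mixed Cartan--root part. Projecting $r+r^{21}=\mathfrak{t}$ weight by weight yields $c_{-\alpha,\alpha}+c_{\alpha,-\alpha}=1$, skew-symmetry $c_{\alpha,\beta}=-c_{\beta,\alpha}$ off the diagonal $\beta=-\alpha$, and on the Cartan part exactly \eqref{eq:r0Cond2}. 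The mixed piece $r_{\text{mix}}$ can be eliminated by a $\sigma\otimes\sigma$ gauge transformation coming from an inner automorphism of $\mathfrak{g}$ translating along root directions, so without loss of generality $r_{\text{mix}}=0$.

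To establish (i), project \eqref{eq:CYBE} onto $\mathfrak{g}_{-\alpha}\otimes\mathfrak{g}_\alpha\otimes\mathfrak{h}$ for each $\alpha\in\Phi^+$; combined with the linear relation from $r+r^{21}=\mathfrak{t}$, this forces $c_{-\alpha,\alpha}\in\{0,1\}$, and a further inner-automorphism gauge normalizes $c_{-\alpha,\alpha}=1$ for every $\alpha\in\Phi^+$, producing the diagonal sum in \eqref{eq:RmtxCons}. The main obstacle concerns the off-diagonal coefficients $c_{-\alpha,\beta}$ with $\beta\neq\alpha$: projections of the CYBE onto triple weight spaces $\mathfrak{g}_\mu\otimes\mathfrak{g}_\nu\otimes\mathfrak{g}_\lambda$ with $\mu+\nu+\lambda=0$ produce a system of quadratic relations that must be solved systematically. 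I would proceed by induction on total height, showing that the support $\{(-\alpha,\beta)\colon c_{-\alpha,\beta}\neq 0\}$ is the graph of a partial isometry between sub-root-systems generated by simple subsets $\Gamma_1,\Gamma_2\subseteq\Delta$; preservation of structure constants in the CYBE forces this isometry to be induced by an isometry $\gamma:\Gamma_1\to\Gamma_2$ of simple roots, and its nilpotency ($\gamma^m(\alpha)\notin\Gamma_1$ eventually) is automatic from finiteness of $\Gamma_1$. A final torus rescaling normalizes the nonzero off-diagonal coefficients so that the sum matches the wedge term in \eqref{eq:RmtxCons}.

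For (ii), with $r$ now in the form \eqref{eq:RmtxCons}, project the CYBE onto $\mathfrak{g}_{-\alpha}\otimes\mathfrak{g}_{\gamma(\alpha)}\otimes\mathfrak{h}$ for $\alpha\in\Gamma_1$. The only nonzero contribution comes from pairing $r_0$ against the wedge term $E_{-\alpha}\wedge E_{\gamma(\alpha)}$ via the brackets $[H,E_\beta]=\beta(H)E_\beta$ with $H\in\mathfrak{h}$; collecting coefficients gives exactly \eqref{eq:r0Cond1}. Finally, for (iii), identify $r_0$ with a linear operator $R:\mathfrak{h}^*\to\mathfrak{h}$ via the bilinear form $(\,,\,)$; then \eqref{eq:r0Cond2} fixes the symmetric part of $R$ (so $R-\tfrac{1}{2}\mathfrak{t}_0$ is skew), while \eqref{eq:r0Cond1} becomes the intertwining relation $R(\gamma(\alpha)-\alpha)=-\alpha$ on each $\alpha\in\Gamma_1$. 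These $|\Gamma_1|$ vector equations determine $R$ on the $|\Gamma_1|$-dimensional subspace $\operatorname{span}\{\gamma(\alpha)-\alpha:\alpha\in\Gamma_1\}\subseteq\mathfrak{h}$, and the remaining freedom compatible with skew-symmetry is exactly parameterized by the orthogonal complement, giving a solution space of dimension $|\Delta|-|\Gamma_1|=|\Delta\setminus\Gamma_1|=k_T$.
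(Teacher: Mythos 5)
This proposition is not proved in the paper at all: it is the Belavin--Drinfeld classification theorem, quoted verbatim from \cite{BDsolCYBE} (see also \cite[Ch.\ 3]{chriprsly}), so the honest comparison is whether your sketch could stand as a self-contained proof of that theorem. It cannot in its present form, because the core of part (i) is announced rather than proved. The hard content of Belavin--Drinfeld is precisely the step you defer to ``induction on total height'': showing that the support of the off-diagonal coefficients $c_{-\alpha,\beta}$ is governed by an isometry $\gamma:\Gamma_1\to\Gamma_2$ between subsets of \emph{simple} roots satisfying the nilpotency condition, and that the coefficients can then be normalized to the wedge sum in \eqref{eq:RmtxCons}. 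Moreover, two preparatory claims are not justified: the CYBE projected onto a single space $\mathfrak{g}_{-\alpha}\otimes\mathfrak{g}_{\alpha}\otimes\mathfrak{h}$ does not by itself force $c_{-\alpha,\alpha}\in\{0,1\}$ root by root --- the relevant relations couple triples $\alpha,\beta,\alpha+\beta$, and what must actually be shown is that $\{\alpha:c_{-\alpha,\alpha}=1\}$ is a system of positive roots, which is then moved onto the fixed $\Phi^{+}$ by an automorphism; and the removal of the mixed $\mathfrak{h}\otimes\mathfrak{g}_\alpha$ part by an inner automorphism is asserted without argument. Similarly, in (ii) the projection onto $\mathfrak{g}_{-\alpha}\otimes\mathfrak{g}_{\gamma(\alpha)}\otimes\mathfrak{h}$ also receives contributions from commutators of the diagonal term $\sum E_{-\mu}\otimes E_{\mu}$ with the wedge term (producing elements $[E_{\alpha},E_{-\alpha}]\in\mathfrak{h}$), so the claim that only the pairing of $r_0$ against the wedge term survives is not correct as stated, even though the final identity \eqref{eq:r0Cond1} is.

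Part (iii) has a genuine counting error. Writing $r_0=\tfrac12\mathfrak{t}_0+s$ with $s$ skew, condition \eqref{eq:r0Cond2} is absorbed and \eqref{eq:r0Cond1} prescribes $s(v,\cdot)$ for $v$ in the $|\Gamma_1|$-dimensional span of $\{\gamma(\alpha)-\alpha\}$ (after the identification you make); the residual freedom is then the space of skew forms on a complementary subspace, not a $k_T$-dimensional space of vectors, so your argument yields a count of type $\binom{k_T}{2}$ rather than the asserted $k_T$, and in any case the solution set is an affine, not linear, subspace in the variable $r_0$. You also omit the consistency check that the prescribed values are compatible with skew-symmetry on that span (this is where the isometry and nilpotency of $\gamma$ enter), without which existence of solutions is not established. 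Since the paper simply cites this statement, the cleanest fix is to do likewise; if you do want a proof, the known routes (Belavin--Drinfeld's original argument, or the classification of Lagrangian subalgebras/Manin triples of $\mathfrak{g}\oplus\mathfrak{g}$) supply exactly the structural step your outline leaves open.
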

Two classical R-matrices of the form \eqref{eq:RmtxCons} that are
associated with the same BD triple are said to belong to the same
Belavin--\emph{Drinfeld class. }The corresponding bracket defined
in~\eqref{eq:sklnPB} by an R-matrix $r$ associated with a triple
$T$ will be denoted by $\{\ ,\ \}_{T}$.

Given a BD triple $T$ for $\mathcal{G}$, write 
\[
\mathfrak{h}_{T}=\left\{ h\in\mathfrak{h}:\alpha(h)=\beta(h)\text{ if }\alpha\prec\beta\right\} ,
\]
 and define the torus $\mathcal{H}_{T}=\exp\mathfrak{h}_{T}\subset\mathcal{G}$.

\subsection{Main results and outline\label{sub:Main-results}}

The following conjecture was given by Gekhtman, Shapiro and Vainshtein
in \cite{gekhtman2012cluster}:
\begin{conjecture}
\label{Conj:GSV-BD-CS}Let $\mathcal{G}$ be a simple complex Lie
group. For any Belavin--Drinfeld triple $T=(\Gamma_{1},\Gamma_{2},\gamma)$
there exists a cluster structure $\mathcal{C}_{T}$ on $\mathcal{G}$
such that 
\begin{enumerate}
\item the number of stable variables is $2k_{T}$, and the corresponding
extended exchange matrix has a full rank. \label{Conj:NumStbVar} 
\item $\mathcal{C}_{T}$ is regular.\label{Conj:CTisregular} 
\item the corresponding upper cluster algebra $\overline{\mathcal{A}}_{\mathbb{C}}(\mathcal{C}_{T})$
is naturally isomorphic to $\mathcal{O}(\mathcal{G})$; \label{Conj:A(C)eqlsO(G)} 
\item the global toric action of $(\mathbb{C}^{*})^{2k_{T}}$ on $\mathbb{C}\left(\mathcal{G}\right)$
is generated by the action of $\mathcal{H}_{T}\otimes\mathcal{H}_{T}$
on $\mathcal{G}$ given by $\left(H_{1},H_{2}\right)\left(X\right)=H_{1}XH_{2}$
; \label{conj:global-toric} 
\item for any solution of CYBE that belongs to the Belavin--Drinfeld class
specified by $T$, the corresponding Sklyanin bracket is compatible
with $\mathcal{C}_{T}$; \label{Conj:Compatible} 
\item a Poisson--Lie bracket on $\mathcal{G}$ is compatible with $\mathcal{C}_{T}$
only if it is a scalar multiple of the Sklyanin bracket associated
with a solution of CYBE that belongs to the Belavin--Drinfeld class
specified by $T$. \label{Conj:a-Poisson--Lie-bracket} 
\end{enumerate}
\end{conjecture}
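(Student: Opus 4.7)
The conjecture has six parts, but within this paper only parts \eqref{Conj:NumStbVar}, \eqref{Conj:CTisregular}, and \eqref{Conj:Compatible} can be reasonably tackled, and only for $\mathcal{G}=SL_n$ with a BD triple $T=(\{\alpha\},\{\beta\},\gamma)$ of minimal size, leaving parts \eqref{Conj:A(C)eqlsO(G)}, \eqref{conj:global-toric}, and \eqref{Conj:a-Poisson--Lie-bracket} to the companion paper. For such $T$ one has $k_T=n-2$, so the expected number of stable variables is $2(n-2)$, and the extended exchange matrix should have $n^2-1$ columns in total.

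The plan is, first, to construct an explicit initial seed $\Sigma=(\mathcal{B}_{\alpha\beta},\tilde B_{\alpha\beta})$. The extended cluster $\mathcal{B}_{\alpha\beta}$ should consist of specific minors of $X\in SL_n$: the trailing principal minors together with a $\gamma$-twisted modification of the leading principal minors, chosen to reflect the positions singled out by $\alpha$ and $\beta$. The quiver $Q_{\alpha\beta}$ should reduce to the standard double Bruhat cell quiver of \cite{gekhtman2012cluster} when $T$ is trivial, and should differ from it only by local surgery in the neighborhood of the rows and columns indexed by $\alpha$ and $\beta$. The construction must be explicit enough that each arrow of $Q_{\alpha\beta}$ and each member of $\mathcal{B}_{\alpha\beta}$ admits a combinatorial description in terms of $\alpha$, $\beta$, and the orbit pattern of $\gamma$.

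To prove part \eqref{Conj:Compatible}, I would first show that $\mathcal{B}_{\alpha\beta}$ is log canonical with respect to the Sklyanin bracket $\{\cdot,\cdot\}_T$. The Sklyanin formula \eqref{eq:sklnPB} together with \eqref{eq:RmtxCons} exhibits $\{\cdot,\cdot\}_T$ as a rank-one $\gamma$-dependent perturbation of the standard bracket, so brackets between members of $\mathcal{B}_{\alpha\beta}$ reduce to the known formulas for brackets between matrix minors plus a correction that, thanks to the careful choice of the twisted minors, is again proportional to the product. Once the coefficient matrix $\Omega^{\mathcal{B}_{\alpha\beta}}$ is written down, Proposition \ref{prop:PoissCompStruc} reduces compatibility to checking $\tilde B_{\alpha\beta}\,\Omega^{\mathcal{B}_{\alpha\beta}}=[D\ 0]$ with $D$ diagonal and invertible, which is a finite block-matrix calculation. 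Nonsingularity of $D$ simultaneously certifies the full-rank condition in part \eqref{Conj:NumStbVar}.

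The hardest step will be proving regularity, part \eqref{Conj:CTisregular}. The Laurent phenomenon only guarantees that each cluster variable is a Laurent polynomial in $\mathcal{B}_{\alpha\beta}$, but regularity on $SL_n$ requires that the denominators never actually occur. My strategy would be to exploit the fact that the mutable part of $Q_{\alpha\beta}$ is a local modification of the standard quiver, and to lift the regularity arguments of \cite{gekhtman2012cluster} through mutation sequences that stay away from the affected region; for mutations near $\alpha$ or $\beta$, one must explicitly identify each new cluster variable as a minor of $X$ or of a $\gamma$-twisted submatrix of $X$, and verify that the exchange relation \eqref{eq:exrltn} produces exactly this expression. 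Tracking these twisted minors along arbitrary mutation paths, and showing that the two monomials on the right of \eqref{eq:exrltn} always combine to cancel the denominators cleanly, is where I expect most of the technical difficulty to concentrate.
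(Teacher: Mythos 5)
Your scoping is right: within this paper only parts \ref{Conj:NumStbVar}, \ref{Conj:CTisregular} and \ref{Conj:Compatible} are addressed, for $SL_n$ with $\left|\Gamma_1\right|=1$, and your route to the first and last of these is essentially the paper's. The initial extended cluster $\mathcal{B}_{\alpha\beta}$ is indeed the standard family of trailing minors modified near the rows and columns singled out by $\alpha$ and $\beta$ (though the modified entries are not single ``twisted minors'' of $X$: they are the functions $\theta_k=f_{n+k-\alpha,k}f_{1,\beta+1}-f_{n+k-\alpha,k}^{\rightarrow}f_{1,\beta+1}^{\leftarrow}$ and their $\psi_m$ counterparts, i.e.\ determinants of block matrices coming from the Drinfeld double restricted to the diagonal); log canonicity is proved exactly by writing $\left\{ \cdot,\cdot\right\}_{\alpha\beta}-\left\{ \cdot,\cdot\right\}_{std}$ as an explicit low-rank correction (Lemma \ref{lem:PsnBrcktDiff}) and reusing standard-bracket coefficients; and compatibility plus the full-rank claim of part \ref{Conj:NumStbVar} come, as you say, from Proposition \ref{prop:PoissCompStruc} via the identity $\bar{B}\bar{\Omega}=\left[I\ 0\right]$.

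The genuine gap is in your plan for regularity, part \ref{Conj:CTisregular}. You propose to track the ($\gamma$-twisted) minors ``along arbitrary mutation paths'' and verify denominator cancellation there; this is not a workable verification, since there are unboundedly many seeds and the Laurent phenomenon gives no control over which denominators actually occur along a general path. The paper sidesteps this entirely with Proposition \ref{prop:ACNatIsoO(V)} (a weaker analogue of Proposition 3.37 in \cite{GSV}): if $\rank\tilde{B}=n$, the initial extended cluster consists of regular functions, and every variable obtained from the initial cluster by a \emph{single} application of the exchange relation \eqref{eq:exrltn} is regular, then the whole cluster structure is regular. With that reduction, regularity becomes a finite case analysis over the vertices of $Q_{\alpha\beta}$: vertices whose exchange coincides with the standard one, vertices adjacent to one or two of the $\theta_k,\psi_m$ (handled with the Desnanot--Jacobi identity \eqref{eq:DesJacId} and its rectangular variant), and the two newly unfrozen vertices $\left(1,\beta+1\right)$ and $\left(\alpha+1,1\right)$, in each case exhibiting the exchange product as the old variable times an explicit minor. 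Without this reduction (or some equivalent finite criterion), your strategy does not terminate in a proof; with it, the remaining work is exactly the case-by-case computation the paper carries out in Section \ref{sec:Regularity}.
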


The main result of this paper is the following theorem:
\begin{thm}
\label{thm:MainRes}For any Belavin--Drinfeld triple $T=\left(\left\{ \alpha\right\} ,\left\{ \beta\right\} ,\gamma:\alpha\mapsto\beta\right)$,
there exists a cluster structure on $SL_{n}$ with a locally regular
initial seed and with $2k_{T}$ stable variables, that is compatible
with the Sklyanin bracket associated with $T$. 
\end{thm}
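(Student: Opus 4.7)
The plan is to prove Theorem \ref{thm:MainRes} by following the three-step structure already outlined in Section \ref{sub:Main-results}: first, build an explicit initial extended cluster $\mathcal{B}_{\alpha\beta}$ together with a quiver $Q_{\alpha\beta}$; then, establish log-canonicity and compatibility of the resulting seed with the Sklyanin bracket $\{\cdot,\cdot\}_{\alpha\beta}$; finally, verify regularity of the cluster structure $\mathcal{C}(\Sigma)$ generated by that seed.

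For the first step I would begin from the standard cluster on $SL_{n}$ of \cite{BFZ, gekhtman2012cluster}, whose elements are all trailing and leading solid minors of the coordinate matrix $X$. Because $|\Gamma_{1}|=|\Gamma_{2}|=1$, the modification imposed by $T$ should be local: replace a combinatorially prescribed family of those minors by twisted minors built from a $\gamma$-deformation of $X$, so that rows or columns moved by $\gamma:\alpha\mapsto\beta$ are matched to their images. The resulting set $\mathcal{B}_{\alpha\beta}$ has $\dim SL_{n}=n^{2}-1$ elements, of which the $2k_{T}=2(n-2)$ frozen ones should be the twisted analogues of the outermost row/column minors, the identity $\det X=1$ accounting for the gap.

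Step two is to check log-canonicity and compatibility. Log-canonicity is verified directly from the explicit formula \eqref{eq:sklnPB} on pairs of minors: the bracket splits as the usual row/column-index expression plus corrections coming from the extra summand of $r$ supported on $E_{-\alpha}\wedge E_{\beta}$ in \eqref{eq:RmtxCons}, and the twisting chosen in step one is designed precisely so those corrections cancel on $\mathcal{B}_{\alpha\beta}$. Once the coefficient matrix $\Omega^{\tilde{\mathbf{x}}}$ has been computed, compatibility with $Q_{\alpha\beta}$ reduces by Proposition \ref{prop:PoissCompStruc} to the matrix identity $\tilde{B}_{\alpha\beta}\,\Omega^{\tilde{\mathbf{x}}}=[D\ 0]$, which is a finite linear-algebraic check. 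The quiver $Q_{\alpha\beta}$ itself is then obtained from the standard one by a local surgery at the vertices carrying twisted minors, with its precise arrows dictated by that identity.

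The hard step, and the one on which I would spend the most care, is regularity. A priori each mutation only produces a Laurent polynomial in the previous cluster, so one must argue that every exchange relation \eqref{eq:exrltn} yields a genuine element of $\mathcal{O}(SL_{n})$. My plan is to exhibit, for every cluster variable obtained from $\mathcal{B}_{\alpha\beta}$ by a short mutation sequence, an explicit regular function (another minor of $X$, or of the $\gamma$-twisted matrix) with the correct Laurent expansion; the Laurent phenomenon then forces equality. To keep the combinatorics bounded as $n$ grows, I would exploit the equivariance of the construction under the toric action $(H_{1},H_{2})\cdot X=H_{1}XH_{2}$ of $\mathcal{H}_{T}\otimes\mathcal{H}_{T}$, reducing the regularity check to a small list of cases indexed by the relative position of $\alpha$ and $\beta$ in $\Delta$. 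Once regularity is established, items (\ref{Conj:NumStbVar}), (\ref{Conj:CTisregular}) and (\ref{Conj:Compatible}) of Conjecture \ref{Conj:GSV-BD-CS} are in hand, which is exactly Theorem \ref{thm:MainRes}.
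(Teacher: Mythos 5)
Your three-step plan (construct an initial seed, prove log-canonicity and compatibility, prove regularity) is the same skeleton the paper follows, and your steps one and two are broadly the paper's route: the initial cluster is the standard family of trailing minors with the functions at positions $i=n+j-\alpha$, $j=n+i-\beta$ replaced by determinants of block matrices, and log-canonicity is obtained by comparing $\{\cdot,\cdot\}_{\alpha\beta}$ with the standard bracket. Two caveats on step two, though: the corrections do not "cancel by design" uniformly --- on $\mathcal{B}_{std}\cap\mathcal{B}_{\alpha\beta}$ they vanish identically (Corollary \ref{cor:fginSbrckteq}), while for the new variables $\theta_k,\psi_m$ one needs genuine computations with the coefficient identities of Lemma \ref{lem:SumCfeq0}; and compatibility is not a "finite linear-algebraic check" since the theorem is for all $n$ --- it is proved uniformly, row by row of $\bar B\bar\Omega$, by playing the modified rows off against the standard exchange identities (Theorem \ref{thm:Compatible}).

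The genuine gap is in your regularity step. Regularity concerns \emph{every} cluster variable in \emph{every} seed, so exhibiting regular representatives for variables reachable by "short mutation sequences" proves nothing without a reduction principle; the paper supplies one, Proposition \ref{prop:ACNatIsoO(V)} (the analogue of Proposition 3.37 in \cite{GSV}), which says that full rank of the extended exchange matrix together with regularity of the initial extended cluster and of the variables obtained by a \emph{single} mutation from it already forces regularity of the whole structure --- this is also why the identity $\bar B\bar\Omega=[I\ 0]$ matters beyond compatibility. Moreover, neither of the tools you lean on does the job you assign it: the toric action of $\mathcal{H}_T\otimes\mathcal{H}_T$ merely rescales the chosen functions (they are eigenvectors of it), so it cannot merge different mutation directions or keep the case count bounded as $n$ grows; and the Laurent phenomenon only yields Laurent polynomiality in the initial variables, which is strictly weaker than regularity (the initial variables vanish on hypersurfaces), so "matching Laurent expansions" cannot certify that your candidate regular function equals the exchanged variable --- one must actually factor the exchange polynomial \eqref{eq:exrltn}. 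What makes the verification uniform in $n$ is instead that $Q_{\alpha\beta}$ differs from $Q_{std}$ only near the vertices $(\alpha+1,1)$, $(1,\beta+1)$, $(n,\alpha)$, $(n,\alpha+1)$, $(\beta,n)$, $(\beta+1,n)$, so almost all one-step exchanges coincide with the standard ones, and the few modified ones are computed explicitly via the Desnanot--Jacobi identity \eqref{eq:DesJacId} and its non-square variant, each time showing the exchange polynomial is divisible by the variable being exchanged.
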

In other words, Theorem \ref{thm:MainRes} states that part \ref{Conj:NumStbVar}
of Conjecture \ref{Conj:GSV-BD-CS} ise true for $SL_{n}$ for BD
triple with $\left|\Gamma_{1}\right|=1$.

For a given $n$ and a BD triple $T_{\alpha\beta}$, a set $\mathcal{B}_{\alpha\beta}$
of functions in $\mathcal{O}\left(SL_{n}\right)$ is constructed in
Section \ref{sub:Constructing-a-log}. The rest of Section \ref{sec:A-log-canonical-basis}
is dedicated to proving that this set is log canonical with respect
to the Sklyanin bracket $\left\{ \cdot,\cdot\right\} _{\alpha\beta}$
associated with $T_{\alpha\beta}$. After declaring some of these
functions as frozen variables and introducing the quiver $Q_{\alpha\beta}^{n}$
in Section \ref{sub:The-quiver}, the initial seed $\left(\mathcal{B}_{\alpha\beta},Q_{\alpha\beta}^{n}\right)$
determines a cluster structure $\mathcal{C}_{\alpha\beta}$. Theorem
\ref{thm:Compatible} states that $\mathcal{C}_{\alpha\beta}$ is
compatible with the bracket $\left\{ \cdot,\cdot\right\} _{\alpha\beta}$,
and Section \ref{sec:Regularity} proves
that the initial seed is locally regular. Last, Section \ref{sec:Technical}
has some technical computations and results that were used through
the paper.

Parts and \ref{Conj:CTisregular} -- \ref{Conj:a-Poisson--Lie-bracket}
of the conjecture will be proved in the companion paper \cite{eisner2014part2}.

A Poisson--Lie bracket on $SL_{n}$ can be extended to one on $GL_{n},$
with the determinant being a Casimir function. From here on we discuss
$GL_{n}$, and any statement can be restricted to $SL_{n}$ by removing
the determinant function.

\section{A log canonical basis}

\label{sec:A-log-canonical-basis}

This section describes a log canonical set of function, that will
serve as an initial cluster for the structure $\mathcal{C}_{\alpha\beta}.$
After constructing this set in Section \ref{sub:Constructing-a-log},
we show it is log canonical with respect to the bracket $\left\{ \cdot,\cdot\right\} _{\alpha\beta}$
in Section \ref{sub:BisLC}, using results from Section \ref{sec:Technical}.

Before moving on, note the following two isomorphisms of the BD data
for $SL_{n}$: the first reverses the direction of $\gamma$ and transposes
$\Gamma_{1}$ and $\Gamma_{2}$, while the second one takes each root
$\alpha_{j}$ to $\alpha_{\omega_{0}\left(j\right)}$, where $\omega_{0}$
is the longest element in the Weyl group (which in $SL_{n}$ is naturally
identified with the symmetric group $S_{n-1}$). These two isomorphisms
correspond to the automorphisms of $SL_{n}$ given by $X\mapsto-X^{t}$
and $X\mapsto\omega_{0}X\omega_{0}$, respectively. Since R-matrices
are considered up to an action of $\sigma\otimes\sigma$, from here
on we do not distinguish between BD triples obtained one from the
other via these isomorphisms. We will also assume that in the map
$\gamma:\alpha_{i}\mapsto\alpha_{j}$ we always have $i<j$.

Slightly abusing the notation, we sometime refer to a root $\alpha_{i}\in\Delta$
just as $i,$ and write $\gamma:i\mapsto j$ instead of $\gamma:\alpha_{i}\mapsto\alpha_{j}$.
For shorter notation, denote the BD triple
$\left(\left\{ \alpha\right\} ,\left\{ \beta\right\} ,\gamma:\alpha\mapsto\beta\right)$
by $T_{\alpha\beta}$, and naturally the corresponding Sklyanin bracket
will be $\left\{ \cdot,\cdot\right\} _{\alpha\beta}$ .

\subsection{Constructing a log canonical basis \label{sub:Constructing-a-log}}

For a triple $T_{\alpha\beta}$ we will construct a set of matrices
$\mathcal{M}$ such that the set of all their trailing principal minors
is log canonical with respect to $\left\{ \cdot,\cdot\right\} _{\alpha\beta}$
. A trailing principal minor of an $n\times n$ matrix $M$ is a minor
of $M$ of the form $\det M_{[i,n]}^{[j,n]}$.

Following \cite{reyman1994group}, recall the construction of \emph{the
Drinfeld double} of a Lie algebra $\mathfrak{g}$ with the Killing
form $\left\langle \ ,\ \right\rangle $: define $D\left(\mathfrak{g}\right)=\mathfrak{g}\oplus\mathfrak{g}$,
with an invariant nondegenerate bilinear form 
\[
\left\langle \left\langle \left(\xi,\eta\right),\left(\xi',\eta'\right)\right\rangle \right\rangle =\left\langle \xi,\xi'\right\rangle -\left\langle \eta,\eta'\right\rangle .
\]
 Define subalgebras $\mathfrak{d}_{\pm}$ of $D\left(\mathfrak{g}\right)$
by 
\begin{equation}
\mathfrak{d}_{+}=\left\{ \left(\xi,\xi\right):\xi\in\mathfrak{g}\right\} ,\quad\mathfrak{d_{-}}=\left\{ \left(R_{+}\left(\xi\right),R_{-}\left(\xi\right)\right):\xi\in\mathfrak{g}\right\} ,\label{eq:DefSubAlgDpDm}
\end{equation}

where $R_{\pm}\in\End\mathfrak{g}$ are defined for any R-matrix $r$
by 
\begin{equation}
\left\langle R_{+}\left(\eta\right),\zeta\right\rangle =-\left\langle R_{-}\left(\zeta\right),\eta\right\rangle =\left\langle r,\eta\otimes\zeta\right\rangle _{\otimes},\label{eq:RplsDef}
\end{equation}
 and $\langle\ ,\ \rangle_{\otimes}$ is the corresponding Killing
form on the tensor square of $\mathfrak{g}$.

For a matrix $X$ let $M_{ij}\left(X\right)$ be the maximal contiguous
submatrix of $X$ with $x_{ij}$ at the upper left hand corner. That
is, 
\begin{eqnarray*}
M_{ij}\left(X\right) & = & \begin{cases}
\left[\begin{array}{ccc}
x_{ij} & \cdots & x_{in}\\
\vdots &  & \vdots\\
x_{n-j+i,j} & \cdots & x_{n-j+i,n}
\end{array}\right] & \text{if }j>i\\
\\
\left[\begin{array}{ccc}
x_{ij} & \cdots & x_{i,n-i+j}\\
\vdots &  & \vdots\\
x_{nj} & \cdots & x_{n,n-i+j}
\end{array}\right] & \text{otherwise.}
\end{cases}
\end{eqnarray*}

Slightly abusing the notation, define $M_{ij}\left(X,Y\right)$ on
the double $D\left(\mathfrak{gl}_{n}\right)$
by 
\[
M_{ij}\left(X,Y\right)=\begin{cases}
M_{ij}\left(X\right) & \text{if }i\geq j\\
M_{ij}\left(Y\right) & \text{otherwise, }
\end{cases}
\]
and we can then write $M_{ij}\left(X\right)=M_{ij}\left(X,X\right).$
Let $X_{R}^{C}$ denote the submatrix of $X$ with rows in the set
$R$ and columns in $C$ (with $R,C\subseteq\left[n\right]$). Then
define two special families of matrices: for $1\leq j\leq\alpha$
and $i=n+j-\alpha$ set 
\[
\tilde{M}_{ij}(X,Y)=\left[\begin{array}{cc}
X_{\left[i,n\right]}^{\left[j,\alpha+1\right]} & 0_{\left(n-i+1\right)\times\mu}\\
0_{\mu\times\left(n-i+1\right)} & Y_{\left[1,\mu\right]}^{\left[\beta,n\right]}
\end{array}\right]
\]
with $\mu=n-\beta$, and for $1\le i\le\beta$ and $j=n+i-\beta$,
set 
\[
\tilde{M}_{ij}(X,Y)=\left[\begin{array}{cc}
Y_{\left[i,\beta+1\right]}^{\left[j,n\right]} & 0_{\left(n-j+1\right)\times\mu}\\
0_{\mu\times\left(n-j+1\right)} & X_{\left[\alpha,n\right]}^{\left[1,\mu\right]}
\end{array}\right],
\]
 and here $\mu=n-\alpha$. Note that these matrices are not block
diagonal: in the first case the number of columns in each of the two
blocks is greater than the number of rows by one, while in the second
case the number of rows in each block is greater than the number of
columns by one. As above, we set $\tilde{M}_{ij}(X)=\tilde{M}_{ij}(X,X)$.

When $n$ is even there are two special cases - $\alpha=\frac{n}{2}$
or $\beta=\frac{n}{2}$. We discuss here the case $\beta=\frac{n}{2}$,
as the case of $\alpha=\frac{n}{2}$ is symmetric (and isomorphic
under $\alpha\longleftrightarrow\beta$): for $i=j+\alpha$ the matrix
$\tilde{M}_{ij}\left(X,Y\right)$ now involves three blocks (submatrices
of $X$ and $Y$), and it has the form 
\[
\tilde{M}_{ij}\left(X,Y\right)=\left[\begin{array}{ccccccccc}
x_{ij} & \cdots & \cdots & x_{i,\alpha+1} & 0 & \cdots &  & \cdots & 0\\
 & \ddots &  & \vdots & \vdots &  &  &  & \vdots\\
x_{n1} & \cdots & x_{n\alpha} & x_{n,\alpha+1} & 0 & \cdots &  &  & \vdots\\
0 & \cdots & y_{1\beta} & y_{1,\beta+1} & \cdots & y_{1n} & 0 & \cdots & 0\\
 &  & \vdots & \vdots & \ddots & \vdots & \vdots &  & \vdots\\
 &  & \vdots & y_{\beta j} & \cdots & y_{\beta n} & x_{\alpha1} & \cdots & x_{\alpha\mu}\\
 &  & y_{\beta+1} & y_{\beta+1,j} & \cdots & y_{\beta+1,n} & x_{\alpha+1,1} &  & \vdots\\
 &  &  & 0 & \cdots & 0 & \vdots & \ddots & \vdots\\
 &  &  & \vdots &  & \vdots & x_{n1} & \cdots & x_{n\mu}
\end{array}\right].
\]
 Now define 
\begin{equation}
f_{ij}(X)=\det M_{ij}(X).\label{eq:fijDefasDet}
\end{equation}
The set $\mathcal{B}_{std}=\left\{ f_{ij}(X)|i,j\in\left[n\right]\right\} $
of determinants of all matrices $M_{ij}(X)$ forms a log canonical
set with respect to the standard bracket \cite[Ch. 4.3]{GSV}. For
the $\alpha\mapsto\beta$ case, take this set and for all $i=n+j-\alpha$
and $j=n+i-\beta$ replace $M_{ij}$$\left(X\right)$ with $\tilde{M}_{ij}\left(X\right)$.
This assures that for a fixed pair $\left(i,j\right)\in[n]\times[n]$
there is still a unique matrix in the set. Denote this matrix (either
$M_{ij}(X)$ or $\tilde{M}_{ij}\left(X\right)$)
by $\overline{M}_{ij}$, and set 
\[
\varphi_{ij}=\det\overline{M}_{ij}.
\]

Our set of log canonical functions (with respect to the bracket $\left\{ \cdot,\cdot\right\} _{\alpha\beta}$) -- that will later serve as an initial cluster -- is the set $\mathcal{B}_{\alpha\beta}=\left\{ \varphi_{ij}|i,j\in\left[n\right]\right\} $.
Further on we will also need the set $\mathcal{B}^{D}$ of functions
on $D\left(\mathfrak{gl}_{n}\right)$, defined
by $\mathcal{B}^{D}=\left\{ \varphi_{ij}^{D}\left(X,Y\right)=\det M_{ij}\left(X,Y\right)|i,j\in\left[n\right]\right\} $.

Some matrices in the above construction contain others: for example,
$M_{1j}\left(X\right)$ contains all matrices $M_{ik}$ with $k=j+i-1$.
Therefore, we can see the set $\mathcal{B}$ as the set of all trailing
principal minors of matrices $M_{1j}\left(X\right)$ and $M_{i1}\left(X\right)$,
excluding $M_{\alpha+1,1}\left(X\right)$ and $M_{1,\beta+1}\left(X\right)$.
So the set $\mathcal{B}_{std}$ can be viewed as all trailing principal
minors of the matrices $M_{1j}$ and $M_{i1}$ with $i,j\in[n]$.
We will denote this set of matrices by $\mathcal{M}_{std}$. Equivalently,
define the set 
\begin{equation}
\mathcal{M}_{\alpha\beta}=\left\{ \overline{M}_{1j},\overline{M}_{i1}|i,j\in[n]\right\} \setminus\left\{ \overline{M}_{1,\beta+1},\overline{M}_{\alpha+1,1}\right\} ,\label{eq:Mabdef}
\end{equation}
and it is not hard to see that $\mathcal{B}_{\alpha\beta}=\left\{ \varphi_{ij}|i,j\in\left[n\right]\right\} $
is the set of trailing principal minors of all matrices in $\mathcal{M}_{\alpha\beta}$.

Clearly, $\left|\mathcal{B}_{\alpha\beta}\right|=n^{2}$, since the
map $\left(i,j\right)\mapsto\varphi_{ij}$ is a bijection between
$\left[n\right]\times\left[n\right]$ and $\mathcal{B}_{\alpha\beta}$.
In Section \ref{sub:BisLC} we show that $\mathcal{B}_{\alpha\beta}$
is log canonical with respect to the bracket $\left\{ \cdot,\cdot\right\} _{\alpha\beta}.$
\begin{rem}
Further details about the construction of a log canonical set from
determinants of matrices as above can be found in \cite{eisner2014SL5}.
The special case of $n=5$ is addressed there, with any BD data, but
it can be easily generalized to any $n$ (with the restriction $\left|\Gamma_{1}\right|=\left|\Gamma_{2}\right|=1$).
\end{rem}

\subsection{The log canonical set $\mathcal{B}_{\alpha\beta}$ \label{sub:BisLC}}

Comparing the bracket $\left\{ \cdot,\cdot\right\} _{\alpha\beta}$
with the standard one will allow us to compute $\left\{ f,g\right\} _{\alpha\beta}$
for every pair of functions $f,g\in\mathcal{B}_{\alpha\beta}.$ We
will use results from Section \ref{sec:Technical}.

Since some of the proofs involve the standard Poisson bracket and
cluster structure on $SL_{n}$, we start with a reminder: there are
multiple Poisson brackets on $SL_{n}$ that correspond to the trivial
BD data $\Gamma_{1}=\Gamma_{2}=\emptyset$, since $r_{0}$ is not
uniquely determined. For a pair $\alpha,\beta$ we will use $r_{0}$
as defined in \eqref{eq:r0def}, and call the associated Poisson bracket
\emph{the }standard Poisson bracket on $SL_{n}.$ The corresponding
cluster structure on $SL_{n}$ that will be called the standard one
and denoted $\mathcal{C}_{std}$ is described in \cite{BFZ} and \cite{gekhtman2012cluster}.
Note that this cluster structure is independent on the choice of $r_{0}$
and the Poisson bracket. The initial seed of this cluster structure
is the set $\{f_{ij}\}_{i,j=1}^n$
defined by 
\begin{equation}
f_{ij}=\begin{cases}
          \det X_{\left[i,n+i-j\right]}^{\left[j,n\right]} & \text{ if } j\geq i,\\
          \det X_{\left[i,n\right]}^{\left[j,n-i+j\right]} & \text{ otherwise. }
          \end{cases}         
          \label{eq:fijAsXSubMat}
\end{equation}
looks as follows: set $\mu\left(i,j\right)=\min\left(n,n+i-j\right)$
and write 

This definition coincides with \eqref{eq:fijDefasDet}, and for all
$\varphi_{ij}\in\mathcal{B}_{\alpha\beta}\cap\mathcal{B}_{std}$ we
have $\varphi_{ij}=f_{ij}$.

The function $f_{11}=\det X$ is constant on $SL_{n}$. Take the set
\[
\left\{ f_{ij}\right\} _{i,j=1}^{n}\setminus\left\{ f_{11}\right\} 
\]
 as the set of cluster variables. Set the variables $f_{i1}$ and
$f_{1j}$ to be frozen, so there are $n^{2}-1$ cluster variables
with $2\left(n-1\right)$ of them frozen. Let $Q_{std}^{n}$ be the
quiver of $\mathcal{C}_{std}^{n}$ (see \cite{BFZ,gekhtman2012cluster}.
The vertices of $Q_{std}^{n}$ are placed on an $n\times n$ grid
with rows numbered from top to bottom and columns numbered from left
to right. The cluster variable $f_{ij}$ corresponds to the node $\left(i,j\right)$
(that is, the node on the $i$-th row and the $j$-th column). There
are arrows from each node $(i,j)$ to $\left(i,j+1\right)$ (as long
as $j\neq n$), from $\left(i,j\right)$ to $\left(i+1,j\right)$
(when $i\neq n)$ and from $\left(i+1,j+1\right)$ to $\left(i,j\right)$.
Arrows connecting two frozen variables can be ignored. As explained
at the end of Section \ref{sub:Main-results}, we can extend from
$SL_{n}$ to $GL_{n}$ by adding the function $f_{11}=\det X$. Figure
\ref{fig:GL5StdQvr} shows the initial quiver of the standard cluster
structure on $GL_{5}$ (remove the upper left node with the arrow
incident to it to get the initial standard quiver for $SL_{5}$).
Mutable variables are represented by circles, while frozen ones are
represented by squares. Then $\Sigma_{std}=\left(\mathcal{B}_{std},Q_{std}^{n}\right)$
is an initial seed for the standard cluster structure on $SL_{n}$
\cite{BFZ,gekhtman2012cluster}.

\begin{figure}
\begin{centering}
\includegraphics[scale=0.45]{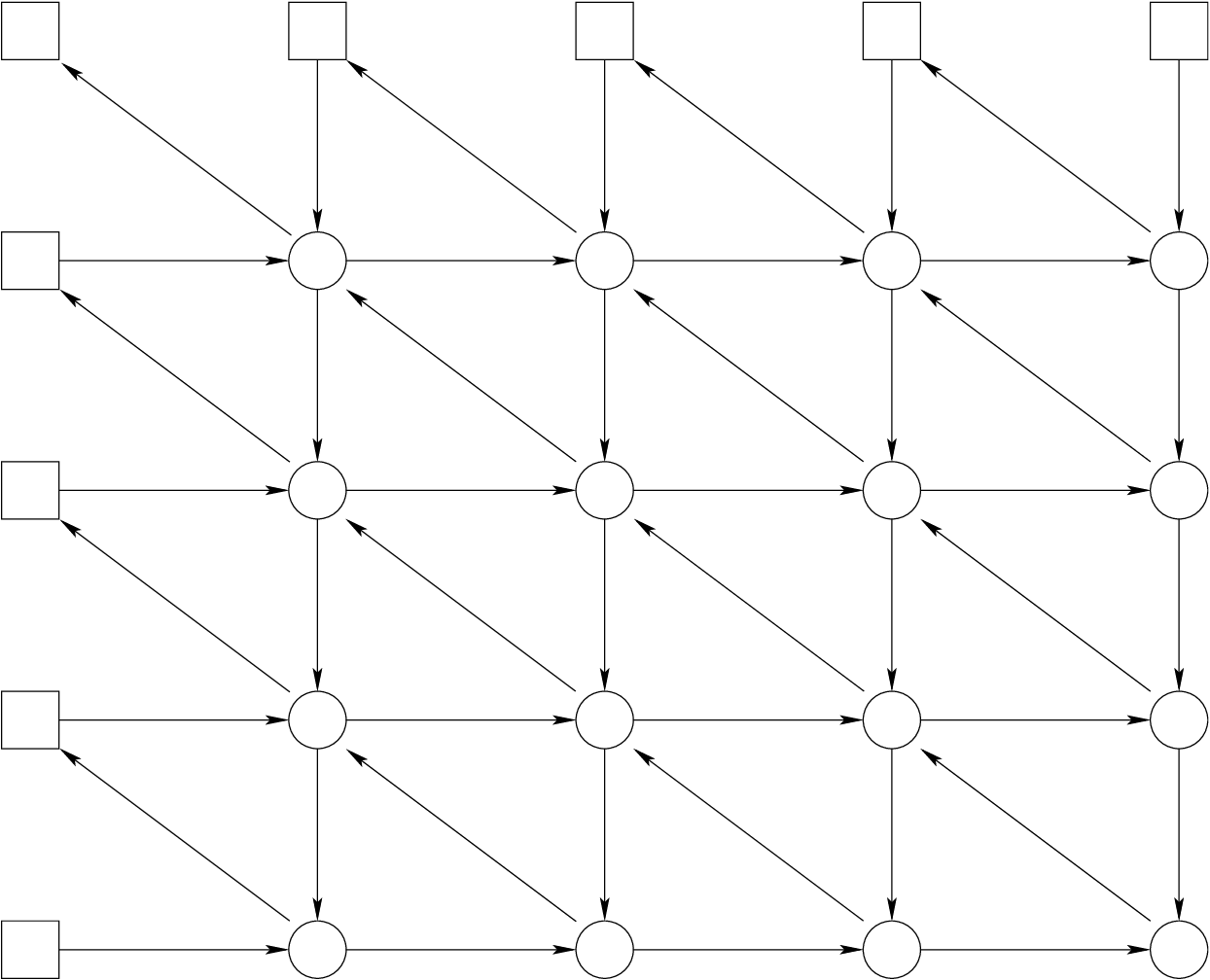} 
\par\end{centering}

\caption{The standard quiver for $GL_{5}$}

\centering{}\label{fig:GL5StdQvr} 
\end{figure}

We will use the following notations: 
\begin{eqnarray}
f^{i\leftarrow j} & = & \left(\nabla f\cdot X\right)_{ij}=\sum_{k=1}^{n}\frac{\partial f}{\partial x_{ki}}x_{kj}\\
f_{j\leftarrow i} & = & \left(X\cdot\nabla f\right)_{ij}=\sum_{k=1}^{n}\frac{\partial f}{\partial x_{jk}}x_{ik}.
\end{eqnarray}
 Note that if $f$ is a determinant of a submatrix $S$ of a matrix
$X$, then $f^{i\leftarrow j}$ (or $f_{i\leftarrow j}$) is the same
determinant, with column (or row) $i$ replaced by column (row) $j$.
If $S$ does not contain column (row) $i$, then $f^{i\leftarrow j}=0\ \left(f_{i\leftarrow j}=0\right).$
If $f=f_{ij}=\det S$, where $S=X_{\left[i,k\right]}^{\left[j,\ell\right]}$
is a dense submatrix of $X$, we write 
\begin{eqnarray*}
f^{\rightarrow} & = & f^{\ell\leftarrow\ell+1}\\
f^{\leftarrow} & = & f^{j\leftarrow j-1}\\
f^{\uparrow} & = & f_{i\leftarrow i-1}\\
f^{\downarrow} & = & f_{k\leftarrow k+1}.
\end{eqnarray*}

For a pair $\left(f,g\right)$ of log canonical functions, denote
by $\omega_{f,g}$ the Poisson coefficient 
\begin{equation}
\omega_{f,g}=\frac{\left\{ f,g\right\} _{std}}{fg}.\label{eq:PsnCfDef}
\end{equation}

Our first result states that the functions we defined in \ref{sub:Constructing-a-log}
are indeed log canonical:
\begin{thm}
The set $\mathcal{B}_{\alpha\beta}$ is log canonical with respect
to the bracket $\left\{ \cdot,\cdot\right\} _{\alpha\beta}$. \label{thm:BisLC}\end{thm}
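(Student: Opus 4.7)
The plan is to handle this by controlled comparison with the standard Sklyanin bracket $\{\cdot,\cdot\}_{std}$, for which log canonicity of the family of trailing principal minors is already known \cite[Ch. 4.3]{GSV}. Using the description \eqref{eq:RmtxCons} of the R-matrix together with \eqref{eq:sklnPB}, I would write
\[
\{f,g\}_{\alpha\beta} = \{f,g\}_{std} + \{f,g\}_{BD},
\]
where $\{\cdot,\cdot\}_{BD}$ collects the contributions of the skew terms $E_{-\alpha}\wedge E_{\beta}$ (together with the rest of the $\gamma$-orbit inside $\Phi^+$) and of the non-standard diagonal part $r_0$. A direct expansion of \eqref{eq:sklnPB} recasts $\{f,g\}_{BD}$ as a finite sum of products of the directional derivatives $f^{i\leftarrow j}$ and $f_{i\leftarrow j}$ (introduced just after \eqref{eq:PsnCfDef}), taken at the specific row/column indices singled out by $\alpha$, $\beta$, and the orbit structure of $\gamma$.

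The argument then splits according to the type of the two functions. For a pair $\varphi_{ij},\varphi_{k\ell}$ that coincide with standard minors $f_{ij},f_{k\ell}$, log canonicity under $\{\cdot,\cdot\}_{std}$ is free, so everything reduces to proving $\{f_{ij},f_{k\ell}\}_{BD}\in\mathbb{Q}\cdot f_{ij}f_{k\ell}$. Since each $f^{\rightarrow},f^{\leftarrow},f^{\uparrow},f^{\downarrow}$ of a dense minor is again a dense minor (or vanishes whenever the replacement row/column lies outside the submatrix), the determinantal (Sylvester/Dodgson--type) identities collected in Section~\ref{sec:Technical} should express each product of derivatives appearing in $\{\cdot,\cdot\}_{BD}$ as a rational multiple of the product $f_{ij}f_{k\ell}$, yielding the desired log canonicity.

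For a pair involving at least one modified generator $\det\tilde M_{ij}$ with $i=n+j-\alpha$ or $j=n+i-\beta$, the key is the block-plus-zero shape of $\tilde M_{ij}$: expanding its determinant along the zero block writes $\varphi_{ij}$ as a product of the two (or, in the exceptional even-$n$ case, three) dense block-determinants. This decomposition is designed precisely so that the derivatives of $\varphi_{ij}$ in the rows and columns attached to $\alpha$ and $\beta$ distribute across the two blocks in the same way that $E_{-\alpha}\wedge E_{\beta}$ acts. Consequently the extra contribution coming from $\{\cdot,\cdot\}_{BD}$ is exactly balanced, on each block separately, by Sylvester identities applied to the dense parts, and the ratio to $\varphi_{ij}\varphi_{k\ell}$ is a constant.

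The main obstacle I expect is the mixed case where one of the two functions is a standard minor $f_{k\ell}$ whose row/column window overlaps the interface between the two blocks of $\tilde M_{ij}$. Here neither the classical log canonicity nor the block decomposition applies cleanly, and one must combine both: a Sylvester identity applied to $f_{k\ell}$ against the two block-determinants of $\tilde M_{ij}$, together with a careful bookkeeping of which derivatives in $\{\cdot,\cdot\}_{BD}$ act on the overhanging extra row or column. This is where the technical lemmas of Section \ref{sec:Technical} are presumably doing the real work, and where the exceptional $\alpha=n/2$ or $\beta=n/2$ case (with an extra middle block in $\tilde M_{ij}$) will require a parallel but separate verification.
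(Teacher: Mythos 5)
Your argument for the pairs of standard minors is essentially right in outline, but the central step for the modified generators is not. You claim that expanding $\det\tilde M_{ij}$ along the zero block writes $\varphi_{ij}$ as a product of the two dense block determinants. This is false: as the paper stresses, $\tilde M_{ij}$ is \emph{not} block diagonal --- each block has one more column than rows (resp.\ one more row than columns), the two blocks overlapping in one column (row), so the Laplace expansion produces a \emph{binomial}, e.g.
$\theta_{k}=\varphi_{n+k-\alpha,k}=f_{n+k-\alpha,k}\,f_{1,\beta+1}-f_{n+k-\alpha,k}^{\rightarrow}\,f_{1,\beta+1}^{\leftarrow}$,
and not a monomial in minors. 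Were it a product, log canonicity would indeed reduce to log canonicity of the factors; for a binomial it does not, and this is precisely where the real work of the theorem lies: one must show that \emph{both} monomials acquire the \emph{same} Poisson coefficient against any other basis function, and that the leftover non-log-canonical cross terms cancel. In the paper this requires the explicit difference formula for $\{\cdot,\cdot\}_{\alpha\beta}-\{\cdot,\cdot\}_{std}$ (Lemma \ref{lem:PsnBrcktDiff}), the fact that $f_{n+k-\alpha,k}^{\rightarrow}$ and $f_{1,\beta+1}^{\leftarrow}$ are themselves cluster variables of the standard structure reached by explicit mutation sequences (Lemma \ref{lem:NewFuncLCandCf}, and its counterparts \ref{lem:NewFuncLCandCfSym}, \ref{lem:ArFuncLC}), the exceptional brackets where an extra term $f_{1,\beta+1}g^{\leftarrow}$ appears (Lemma \ref{lem:SpeFuncBrckt}), and, crucially, the coefficient identity $s\omega_{\alpha\beta}(g)$ of Lemma \ref{lem:SumCfeq0}, which encodes the diagonal part $r_{0}$ of the R-matrix. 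None of this is delivered by ``Sylvester identities applied to each block separately,'' so your argument for any pair involving a $\theta_{k}$ or $\psi_{m}$ (and a fortiori the mixed case you flag as the main obstacle) does not go through as written.

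Two smaller points. For pairs in $\mathcal{B}_{\alpha\beta}\cap\mathcal{B}_{std}$ you propose to prove that the correction term is a rational multiple of the product via determinantal identities; in fact Corollary \ref{cor:fginSbrckteq} shows the correction simply \emph{vanishes} there, because the minors with nonzero $f^{\alpha\leftarrow\alpha+1}$ or $f_{\beta\leftarrow\beta+1}$ are exactly the ones removed from the basis --- no identities are needed. And your $\{\cdot,\cdot\}_{BD}$ should be the concrete four-term expression of Lemma \ref{lem:PsnBrcktDiff}, supported only on the column pair $\alpha,\alpha+1$ and the row pair $\beta,\beta+1$; pinning it down in that form is what makes the case analysis tractable, and leaving it as an unspecified ``finite sum of directional derivatives'' hides the cancellations the proof depends on.
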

\begin{proof}
Compute the bracket $\left\{ f,g\right\} _{\alpha\beta}$ for all
$f,g\in\mathcal{B}_{\alpha\beta}$: first, by Corollary \ref{cor:fginSbrckteq},
if $f,g\in\mathcal{B}_{\alpha\beta}\cap\mathcal{B}_{std}$ then $\left\{ f,g\right\} _{\alpha\beta}=\left\{ f,g\right\} _{std}$,
and therefore $f,g$ are log canonical with respect to $\left\{ \cdot,\cdot\right\} _{\alpha\beta}$.
Now turn to $\left\{ f,g\right\} _{\alpha\beta}$ where $f$ or $g$
are non standard basis functions. These are functions of the form
$\varphi_{ij}$ with $i=n+j-\alpha$ or $j=n+i-\beta$, so for $k\in\left[\alpha\right]$
and $m\in\left[\beta\right]$ define 
\begin{eqnarray*}
\theta_{k} & =\varphi_{n+k-\alpha,k}= & f_{n+k-\alpha,k}\cdot f_{1,\beta+1}-f_{n+k-\alpha,k}^{\rightarrow}\cdot f_{1,\beta+1}^{\leftarrow}\\
\psi_{m} & =\varphi_{m,n+m-\beta}= & f_{m,n+m-\beta}\cdot f_{\alpha+1,1}-f_{m,n+m-\beta}^{\downarrow}\cdot f_{\alpha+1,1}^{\uparrow},
\end{eqnarray*}
and so $f$ or $g$ (or both) are either $\theta_{k}$ or $\psi_{m}$. 

Take the bracket $\left\{ \theta_{k},g\right\} _{\alpha\beta}$ with
$g\in\mathcal{B}_{\alpha\beta}\cap\mathcal{B}_{std}$ and look at
three cases:

1. If $g\neq f_{m,\beta+1}$ for some $m\in\left[n\right],$ and $g\neq f_{n+m-\alpha,m}$
for some $m>k$, we can write 
\begin{eqnarray}
\left\{ \theta_{k},g\right\} _{\alpha\beta} & = & \left\{ f_{n+k-\alpha,k}\cdot f_{1,\beta+1},g\right\} _{\alpha\beta}-\left\{ f_{n+k-\alpha,k}^{\rightarrow}\cdot f_{1,\beta+1}^{\leftarrow},g\right\} _{\alpha\beta}\nonumber \\
 & = & \left\{ f_{n+k-\alpha,k}\cdot f_{1,\beta+1},g\right\} _{std}-\left\{ f_{n+k-\alpha,k}^{\rightarrow}\cdot f_{1,\beta+1}^{\leftarrow},g\right\} _{std}\text{.}\label{eq:abbrcktPhig}
\end{eqnarray}
 The last equality holds since by Lemma \ref{lem:PsnBrcktDiff} the
difference between the $\alpha\beta$ bracket and the standard bracket
is 
\begin{align*}
\left\{ f,g\right\} _{\alpha\beta}-\left\{ f,g\right\} _{std}= & f^{\alpha\leftarrow\alpha+1}g^{\beta+1\leftarrow\beta}-f^{\beta+1\leftarrow\beta}g^{\alpha\leftarrow\alpha+1}\\
 & +f_{\beta\leftarrow\beta+1}g_{\alpha+1\leftarrow\alpha}-f{}_{\alpha+1\leftarrow\alpha}g_{\beta\leftarrow\beta+1}.
\end{align*}
Since $g\in\mathcal{B}_{\alpha\beta}\cap\mathcal{B}_{std}$ and $g\neq f_{m,\beta+1}$
we get $g^{\beta+1\leftarrow\beta}=0$, and also $g^{\alpha\leftarrow\alpha+1}=0$,
because $g\neq f_{n+m-\alpha,m}$. Similarly, $f_{\beta\leftarrow\beta+1}=0$
for the functions $f=f_{n+k-\alpha,k}$ and $f=f_{n+k-\alpha,k}^{\rightarrow},$ and
of course $g_{\beta\leftarrow\beta+1}=0$ (unless $g$ is of the form
$g=\det X_{[1,\beta]}^{[j,\mu]}$ but this function can not in $\mathcal{B}_{\alpha\beta}).$
Therefore, the $\alpha\beta$ bracket and the standard bracket are
equal in this case.

According to Lemma \ref{lem:NewFuncLCandCf}, the functions $f_{n+k-\alpha,k}^{\rightarrow}$
and $f_{1,\beta+1}^{\leftarrow}$ are both log canonical with $g$
(w.r.t. the standard bracket) with Poisson coefficients 
\begin{eqnarray*}
\omega_{f_{n+k-\alpha,k}^{\rightarrow},g} & = & \omega_{f_{n+k-\alpha,k},g}+\omega_{x_{n,\alpha+1},g}-\omega_{x_{n\alpha},g},\\
\omega_{f_{1,\beta+1}^{\leftarrow},g} & = & \omega_{f_{1,\beta+1},g}+\omega_{x_{n\beta},g}-\omega_{x_{n,\beta+1},g},
\end{eqnarray*}
 so \eqref{eq:abbrcktPhig} turns to 
\begin{eqnarray*}
\left\{ \theta_{k},g\right\} _{\alpha\beta} & = & \omega_{1}f_{n+k-\alpha,k}\cdot f_{1,\beta+1}\cdot g\\
 &  & -\left(\omega_{1}-s\omega_{\alpha\beta}\left(g\right)\right)f_{n+k-\alpha,k}^{\leftarrow}\cdot f_{1,\beta+1}^{\rightarrow}\cdot g,
\end{eqnarray*}
 with 
\[
\omega_{1}=\omega_{f_{n+k-\alpha,k},g}+\omega_{f_{1,\beta+1},g}
\]
 and 
\[
s\omega_{\alpha\beta}\left(g\right)=\omega_{f_{n\alpha},g}-\omega_{f_{n,\alpha+1},g}-\omega_{f_{n\beta},g}+\omega_{f_{n,\beta+1},g},
\]
 as defined in \eqref{eq:swabDef}. \\
Now, using Lemma \ref{lem:SumCfeq0} we get 
\[
\left\{ \theta_{k},g\right\} _{\alpha\beta}=\left(\omega_{f_{n+k-\alpha,k},g}+\omega_{f_{1,\beta+1},g}\right)\theta_{k}\cdot g.
\]

2. If $g=f_{m,\beta+1}$ for some $m\in\left[2,n\right]$, we write
$q_{k}=f_{n+k-\alpha,k}$, and then 
\begin{eqnarray*}
\left\{ \theta_{k},g\right\} _{\alpha\beta} & = & \left\{ q_{k}\cdot f_{1,\beta+1},g\right\} _{\alpha\beta}-\left\{ q_{k}^{\rightarrow}\cdot f_{1,\beta+1}^{\leftarrow},g\right\} _{\alpha\beta}\\
 & = & q_{k}\left\{ f_{1,\beta+1},g\right\} _{\alpha\beta}+f_{1,\beta+1}\left\{ q_{k},g\right\} _{\alpha\beta}\\
 &  & -q_{k}^{\rightarrow}\left\{ f_{1,\beta+1}^{\leftarrow},g\right\} _{\alpha\beta}-f_{1,\beta+1}^{\leftarrow}\left\{ q_{k}^{\rightarrow},g\right\} _{\alpha\beta}\\
 & = & q_{k}\left\{ f_{1,\beta+1},g\right\} _{std}+f_{1,\beta+1}\left\{ q_{k},g\right\} _{std}\\
 &  & +f_{1,\beta+1}q_{k}^{\rightarrow}g^{\leftarrow}-q_{k}^{\rightarrow}\left\{ f_{1,\beta+1}^{\leftarrow},g\right\} _{std}\\
 &  & -f_{1,\beta+1}^{\leftarrow}\left\{ q_{k}^{\rightarrow},g\right\} _{std}.
\end{eqnarray*}
These are brackets of log canonical functions, except for $\left\{ f_{1,\beta+1}^{\leftarrow},g\right\} _{std}$
which is given in Lemma \ref{lem:SpeFuncBrckt}, so it is 
\begin{eqnarray*}
\left\{ \theta_{k},g\right\} _{\alpha\beta} & = & \left(\omega_{f_{1,\beta+1},g}+\omega_{q_{k},g}\right)q_{k}\cdot f_{1,\beta+1}\cdot g\\
 &  & +f_{1,\beta+1}q_{k}^{\rightarrow}g^{\leftarrow}\\
 &  & -\left(\omega_{f_{1,\beta+1},g}+\omega_{x_{n\beta},g}-\omega_{x_{n,\beta+1},g}\right)q_{k}^{\rightarrow}\cdot f_{1,\beta+1}^{\leftarrow}\cdot g\\
 &  & -q_{k}^{\rightarrow}f_{1,\beta+1}g^{\leftarrow}\\
 &  & -\left(\omega_{q_{k},g}-\omega_{x_{n\alpha},g}+\omega_{x_{n,\alpha+1},g}\right)f_{1,\beta+1}^{\leftarrow}\cdot q_{k}^{\rightarrow}\cdot g\\
 & = & \left(\omega_{f_{1,\beta+1},g}+\omega_{q_{k},g}\right)q_{k}\cdot f_{1,\beta+1}\cdot g-\\
 &  & \left(\omega_{f_{1,\beta+1},g}+\omega_{q_{k},g}-s\omega_{\alpha\beta}\left(g\right)\right)f_{1,\beta+1}^{\leftarrow}\cdot q_{k}^{\rightarrow}\cdot g,
\end{eqnarray*}
and with Lemma \ref{lem:SumCfeq0} this comes down to 
\begin{equation}
\left\{ \theta_{k},g\right\} _{\alpha\beta}=\left(\omega_{f_{1,\beta+1},g}+\omega_{q_{k},g}\right)\theta_{k}g.\label{eq:Phigcft}
\end{equation}

3. Now look at $g=f_{n+m-\alpha,m}$ for some $m>k$ : with Lemma
\ref{lem:SpeFuncBrckt} we can compute 
\begin{eqnarray*}
\left\{ \theta_{k},g\right\} _{\alpha\beta} & = & \left\{ q_{k}\cdot f_{1,\beta+1},g\right\} _{\alpha\beta}-\left\{ q_{k}^{\rightarrow}\cdot f_{1,\beta+1}^{\leftarrow},g\right\} _{\alpha\beta}\\
 & = & q_{k}\left\{ f_{1,\beta+1},g\right\} _{\alpha\beta}+f_{1,\beta+1}\left\{ q_{k},g\right\} _{\alpha\beta}\\
 &  & -q_{k}^{\rightarrow}\left\{ f_{1,\beta+1}^{\leftarrow},g\right\} _{\alpha\beta}-f_{1,\beta+1}^{\leftarrow}\left\{ q_{k}^{\rightarrow},g\right\} _{\alpha\beta}\\
 & = & q_{k}\left\{ f_{1,\beta+1},g\right\} _{std}-q_{k}f_{1,\beta+1}^{\leftarrow}g^{\rightarrow}+f_{1,\beta+1}\left\{ q_{k},g\right\} _{std}\\
 &  & -q_{k}^{\rightarrow}\left\{ f_{1,\beta+1}^{\leftarrow},g\right\} _{std}-f_{1,\beta+1}^{\leftarrow}\left\{ q_{k}^{\rightarrow},g\right\} _{std}\\
 & = & \left(\omega_{f_{1,\beta+1},g}+\omega_{q_{k},g}\right)q_{k}\cdot f_{1,\beta+1}\cdot g\\
 &  & -\left(\omega_{q_{k},g}+\omega_{f_{1,\beta+1},g}-s\omega_{\alpha\beta}\left(g\right)\right)q_{k}^{\rightarrow}\cdot f_{1,\beta+1}^{\leftarrow}\cdot g,
\end{eqnarray*}
and with Lemma \ref{lem:SumCfeq0} this is 
\begin{equation}
\left\{ \theta_{k},g\right\} _{\alpha\beta}=\left(\omega_{f_{1,\beta+1},g}+\omega_{q_{k},g}\right)\theta_{k}\cdot g.
\end{equation}
 We now turn to look at $\left\{ \theta_{k},\theta_{m}\right\} _{\alpha\beta}$:
w.l.o.g. assume $m>k$: 
\begin{eqnarray}
\left\{ \theta_{k},\theta_{m}\right\} _{\alpha\beta} & = & \left\{ q_{k}\cdot f_{1,\beta+1}-q_{k}^{\rightarrow}\cdot f_{1,\beta+1}^{\leftarrow},q_{m}\cdot f_{1,\beta+1}-q_{m}^{\rightarrow}\cdot f_{1,\beta+1}^{\leftarrow}\right\} _{\alpha\beta}\nonumber \\
 & = & \left\{ q_{k}\cdot f_{1,\beta+1},q_{m}\cdot f_{1,\beta+1}\right\} _{\alpha\beta}-\left\{ q_{k}^{\rightarrow}\cdot f_{1,\beta+1}^{\leftarrow},q_{m}\cdot f_{1,\beta+1}\right\} _{\alpha\beta}\nonumber \\
 &  & -\left\{ q_{k}\cdot f_{1,\beta+1},q_{m}^{\rightarrow}\cdot f_{1,\beta+1}^{\leftarrow}\right\} _{\alpha\beta}\label{eq:brktPhikPhim}\\
 &  & +\left\{ q_{k}^{\rightarrow}\cdot f_{1,\beta+1}^{\leftarrow},q_{m}^{\rightarrow}\cdot f_{1,\beta+1}^{\leftarrow}\right\} _{\alpha\beta}.\nonumber 
\end{eqnarray}
 The Poisson bracket satisfy the Leibniz rule: 
\[
\left\{ f_{1}\cdot f_{2},f_{3}\right\} =f_{1}\cdot\left\{ f_{2},f_{3}\right\} +\left\{ f_{1},f_{3}\right\} \cdot f_{2},
\]
 so each of the four brackets above can break into four terms of the
form $f_{1}\cdot f_{2}\cdot\left\{ f_{3},f_{4}\right\} $. We have
already seen that 
\begin{eqnarray}
 &  & \left\{ q_{k},q_{m}^{\rightarrow}\right\} _{\alpha\beta}=\begin{cases}
\left(\omega_{q_{k},q_{m}}-\omega_{q_{k},x_{n,\alpha}}+\omega_{q_{k},x_{n,\alpha+1}}\right)q_{k}q_{m}^{\rightarrow} & \text{ if }m>k\\
\left(\omega_{q_{k},q_{m}}-\omega_{q_{k},x_{n,\alpha}}+\omega_{q_{k},x_{n,\alpha+1}}\right)q_{k}q_{m}^{\rightarrow}+q_{k}^{\rightarrow}q_{m} & \text{ if }m<k
\end{cases}\nonumber \\
 &  & \left\{ q_{k},f_{1,\beta+1}^{\leftarrow}\right\} _{\alpha\beta}=\left(\omega_{q_{k},f_{1,\beta+1}}+\omega_{q_{k},x_{n\beta}}-\omega_{q_{k},x_{n,\beta+1}}\right)q_{k}f_{1,\beta+1}^{\leftarrow},\nonumber \\
 &  & \left\{ f_{1,\beta+1},f_{1,\beta+1}^{\leftarrow}\right\} _{\alpha\beta}=\left(\omega_{f_{1,\beta+1},x_{n,\beta}}-\omega_{f_{1,\beta+1},x_{n,\beta+1}}\right)f_{1,\beta+1}f_{1,\beta+1}^{\leftarrow},\nonumber \\
 &  & \left\{ q_{k}^{\rightarrow},f_{1,\beta+1}^{\leftarrow}\right\} _{\alpha\beta}=\left(\omega_{q_{k},f_{1,\beta+1}^{\leftarrow}}-\omega_{x_{n\alpha},f_{1,\beta+1}^{\leftarrow}}+\omega_{x_{n,\alpha+1},f_{1,\beta+1}^{\leftarrow}}\right)q_{k}^{\rightarrow}f_{1,\beta+1}^{\leftarrow}\nonumber \\
 &  & =\left[\left(\omega_{qk,f_{1,\beta+1}}+\omega_{q_{k},x_{n\beta}}-\omega_{q_{k},x_{n,\beta+1}}\right)\right.\label{eq:brckfktf1+bt}\\
 &  & -\left(\omega_{x_{n\alpha},f_{1,\beta+1}}+\omega_{x_{n\alpha},x_{n\beta}}-\omega_{x_{n\alpha},x_{n,\beta+1}}\right)\nonumber \\
 &  & \left.+\left(\omega_{x_{n,\alpha+1},f_{1,\beta+1}}+\omega_{x_{n,\alpha+1},x_{n\beta}}-\omega_{x_{n,\alpha+1},x_{n,\beta+1}}\right)\right]q_{k}^{\rightarrow}f_{1,\beta+1}^{\leftarrow}.\nonumber 
\end{eqnarray}
We will look at the four brackets of \eqref{eq:brktPhikPhim} one
at a time. The first one is 
\begin{eqnarray}
\left\{ q_{k}\cdot f_{1,\beta+1},q_{m}\cdot f_{1,\beta+1}\right\} _{\alpha\beta} & = & \left(f_{1,\beta+1}\right)^{2}\left\{ q_{k},q_{m}\right\} _{\alpha\beta}+q_{k}f_{1,\beta+1}\left\{ f_{1,\beta+1},q_{m}\right\} _{\alpha\beta}\nonumber \\
 &  & +q_{m}f_{1,\beta+1}\left\{ q_{k},f_{1,\beta+1}\right\} _{\alpha\beta}\nonumber \\
 & = & \left(f_{1,\beta+1}\right)^{2}\left\{ q_{k},q_{m}\right\} _{std}+q_{k}f_{1,\beta+1}\left\{ f_{1,\beta+1},q_{m}\right\} _{std}\nonumber \\
 &  & -q_{k}f_{1,\beta+1}f_{1,\beta+1}^{\leftarrow}q_{m}^{\rightarrow}+q_{m}f_{1,\beta+1}\left\{ q_{k},f_{1,\beta+1}\right\} _{std}\nonumber \\
 &  & +q_{m}f_{1,\beta+1}q_{k}^{\rightarrow}f_{1,\beta+1}^{\leftarrow}\nonumber \\
 & = & \left(\omega_{q_{k},q_{m}}+\omega_{q_{k},f_{1,\beta+1}}+\omega_{f_{1,\beta+1},q_{m}}\right)\left(f_{1,\beta+1}\right)^{2}q_{k}q_{m}\nonumber \\
 &  & +q_{k}^{\rightarrow}f_{1,\beta+1}^{\leftarrow}q_{m}f_{1,\beta+1}-q_{k}f_{1,\beta+1}f_{1,\beta+1}^{\leftarrow}q_{m}^{\rightarrow}.\label{eq:Brckfirstprt}
\end{eqnarray}
 The second bracket: 
\begin{eqnarray*}
 &  & \left\{ q_{k}^{\rightarrow}\cdot f_{1,\beta+1}^{\leftarrow},q_{m}\cdot f_{1,\beta+1}\right\} _{\alpha\beta}\\
 & = & f_{1,\beta+1}^{\leftarrow}q_{m}\left\{ q_{k}^{\rightarrow},f_{1,\beta+1}\right\} _{\alpha\beta}+f_{1,\beta+1}^{\leftarrow}f_{1,\beta+1}\left\{ q_{k}^{\rightarrow},q_{m}\right\} _{\alpha\beta}\\
 &  & +q_{k}^{\rightarrow}q_{m}\left\{ f_{1,\beta+1}^{\leftarrow},f_{1,\beta+1}\right\} _{\alpha\beta}+q_{k}^{\rightarrow}f_{1,\beta+1}\left\{ f_{1,\beta+1}^{\leftarrow},q_{m}\right\} _{\alpha\beta}\\
 & = & f_{1,\beta+1}^{\leftarrow}q_{m}\left\{ q_{k}^{\rightarrow},f_{1,\beta+1}\right\} _{std}+f_{1,\beta+1}^{\leftarrow}f_{1,\beta+1}\left\{ q_{k}^{\rightarrow},q_{m}\right\} _{std}\\
 &  & +q_{k}^{\rightarrow}q_{m}\left\{ f_{1,\beta+1}^{\leftarrow},f_{1,\beta+1}\right\} _{std}+q_{k}^{\rightarrow}f_{1,\beta+1}\left\{ f_{1,\beta+1}^{\leftarrow},q_{m}\right\} _{std},
\end{eqnarray*}
 and with \eqref{eq:brckfktf1+bt} and Lemma \ref{lem:SumCfeq0} this
is 
\begin{eqnarray}
\left\{ q_{k}^{\rightarrow}\cdot f_{1,\beta+1}^{\leftarrow},q_{m}\cdot f_{1,\beta+1}\right\} _{\alpha\beta} & = & \omega_{2}q_{k}^{\rightarrow}f_{1,\beta+1}^{\leftarrow}q_{m}f_{1,\beta+1}\label{eq:BrckscndPrt}\\
 &  & -q_{k}f_{1,\beta+1}f_{1,\beta+1}^{\leftarrow}q_{m}^{\rightarrow},\nonumber 
\end{eqnarray}
 where 
\[
\omega_{2}=\omega_{q_{k},q_{m}}+\omega_{q_{k},f_{1,\beta+1}}+\omega_{f_{1,\beta+1},q_{m}}+1.
\]
 The third one is 
\begin{eqnarray*}
 &  & \left\{ q_{k}\cdot f_{1,\beta+1},q_{m}^{\rightarrow}\cdot f_{1,\beta+1}^{\leftarrow}\right\} _{\alpha\beta}\\
 & = & q_{k}q_{m}^{\rightarrow}\left\{ f_{1,\beta+1},f_{1,\beta+1}^{\leftarrow}\right\} _{\alpha\beta}+q_{k}f_{1,\beta+1}^{\leftarrow}\left\{ f_{1,\beta+1},q_{m}^{\rightarrow}\right\} _{\alpha\beta}\\
 &  & +f_{1,\beta+1}q_{m}^{\rightarrow}\left\{ q_{k},f_{1,\beta+1}^{\leftarrow}\right\} _{\alpha\beta}+f_{1,\beta+1}f_{1,\beta+1}^{\leftarrow}\left\{ q_{k},q_{m}^{\rightarrow}\right\} _{\alpha\beta}\\
 & = & q_{k}q_{m}^{\rightarrow}\left\{ f_{1,\beta+1},f_{1,\beta+1}^{\leftarrow}\right\} _{std}+q_{k}f_{1,\beta+1}^{\leftarrow}\left\{ f_{1,\beta+1},q_{m}^{\rightarrow}\right\} _{std}\\
 &  & +f_{1,\beta+1}q_{m}^{\rightarrow}\left\{ q_{k},f_{1,\beta+1}^{\leftarrow}\right\} _{std}+f_{1,\beta+1}f_{1,\beta+1}^{\leftarrow}\left\{ q_{k},q_{m}^{\rightarrow}\right\} _{std}
\end{eqnarray*}
 and with Lemma \ref{lem:SumCfeq0} it makes 
\begin{equation}
\left\{ q_{k}\cdot f_{1,\beta+1},q_{m}^{\rightarrow}\cdot f_{1,\beta+1}^{\leftarrow}\right\} _{\alpha\beta}=\omega_{3}q_{k}f_{1,\beta+1}q_{m}^{\rightarrow}f_{1,\beta+1}^{\leftarrow},
\end{equation}
 with 
\[
\omega_{3}=\omega_{q_{k},q_{m}}+\omega_{q_{k},f_{1,\beta+1}}+\omega_{f_{1,\beta+1},q_{m}}.
\]
 The last bracket is 
\begin{eqnarray*}
 &  & \left\{ q_{k}^{\rightarrow}\cdot f_{1,\beta+1}^{\leftarrow},f_{j}^{\rightarrow}\cdot f_{1,\beta+1}^{\leftarrow}\right\} _{\alpha\beta}\\
 & = & q_{k}^{\rightarrow}f_{1,\beta+1}^{\leftarrow}\left\{ f_{1,\beta+1}^{\leftarrow},f_{j}^{\rightarrow}\right\} _{\alpha\beta}+f_{1,\beta+1}^{\leftarrow}f_{j}^{\rightarrow}\left\{ q_{k}^{\rightarrow},f_{1,\beta+1}^{\leftarrow}\right\} _{\alpha\beta}\\
 &  & +f_{1,\beta+1}^{\leftarrow}f_{1,\beta+1}^{\leftarrow}\left\{ q_{k}^{\rightarrow},f_{j}^{\rightarrow}\right\} _{\alpha\beta}\\
 & = & q_{k}^{\rightarrow}f_{1,\beta+1}^{\leftarrow}\left\{ f_{1,\beta+1}^{\leftarrow},f_{j}^{\rightarrow}\right\} _{std}+f_{1,\beta+1}^{\leftarrow}f_{j}^{\rightarrow}\left\{ q_{k}^{\rightarrow},f_{1,\beta+1}^{\leftarrow}\right\} _{std}\\
 &  & +f_{1,\beta+1}^{\leftarrow}f_{1,\beta+1}^{\leftarrow}\left\{ q_{k}^{\rightarrow},f_{j}^{\rightarrow}\right\} _{std}
\end{eqnarray*}
 and again, Lemma \ref{lem:SumCfeq0} turns it to 
\begin{equation}
\left\{ q_{k}^{\rightarrow}\cdot f_{1,\beta+1}^{\leftarrow},f_{j}^{\rightarrow}\cdot f_{1,\beta+1}^{\leftarrow}\right\} _{\alpha\beta}=\omega_{4}q_{k}^{\rightarrow}\cdot f_{1,\beta+1}^{\leftarrow},f_{j}^{\rightarrow}\cdot f_{1,\beta+1}^{\leftarrow},\label{eq:BrkctFrthprt}
\end{equation}
 with 
\[
\omega_{4}=\omega_{q_{k},q_{m}}+\omega_{q_{k},f_{1,\beta+1}}+\omega_{f_{1,\beta+1},q_{m}}.
\]
 Summing \eqref{eq:Brckfirstprt}--\eqref{eq:BrkctFrthprt} proves
that 
\[
\left\{ \theta_{k},\theta_{m}\right\} _{\alpha\beta}=\left(\omega_{q_{k},q_{m}}+\omega_{q_{k},f_{1,\beta+1}}+\omega_{f_{1,\beta+1},q_{m}}\right)\theta_{k}\theta_{m}.
\]
 Last, we check that every pair $\theta_{k},\psi_{m}$ is log canonical
w.r.t. $\left\{ \cdot,\cdot\right\} _{\alpha\beta}$. The process
is pretty much like the one for $\theta_{k}$ and $\theta_{m}$: break
the two functions into their components, 
\[
\theta_{k}=f_{n+k-\alpha,k}f_{1,\beta+1}-f_{n+k-\alpha,k}^{\rightarrow}f_{1,\beta+1}^{\leftarrow}
\]
 and 
\[
\psi_{m}=f_{m,n+m-\beta}f_{\alpha+1,1}-f_{m,n+m-\beta}^{\downarrow}f_{\alpha+1,1}^{\uparrow}.
\]
 Then compute all brackets of these components. Most of these brackets
can be computed as above, but here Lemma \ref{lem:ArFuncLC} will
be needed as well. Setting 
\begin{eqnarray*}
\omega_{\theta_{k},\psi_{m}} & = & \omega_{f_{n+k-\alpha,k},f_{m,n+m-\beta}}+\omega_{f_{n+k-\alpha,k},f_{\alpha+1,1}}\\
 &  & +\omega_{f_{1,\beta+1},f_{m,n+m-\beta}}+\omega_{f_{1,\beta+1},f_{\alpha+1,1},}
\end{eqnarray*}
 the result is 
\[
\left\{ \theta_{k},\psi_{m}\right\} _{\alpha\beta}=\omega_{\theta_{k},\psi_{m}}\theta_{k}\psi_{m}.
\]
 The other possible combinations are symmetric (e.g., $\left\{ \psi_{k},\psi_{m}\right\} $
is symmetric to $\left\{ \theta_{k},\theta_{m}\right\} $). Lemmas
\ref{lem:NewFuncLCandCfSym} and \ref{lem:SpeFuncBrcktSym} can be
used instead of \ref{lem:NewFuncLCandCf} and \ref{lem:SpeFuncBrckt},
respectively. 
\end{proof}

\section{The cluster structure $\mathcal{C}_{\alpha\beta}$ \label{sec:The-cluster-structure}}

\subsection{Stable variables}

Recall the definition of $\mathcal{B}^{D}={\varphi_{ij}^{D}\left(X,Y\right)|i,j\in\left[n\right]}$
from Section \ref{sub:Constructing-a-log}. Look at the set $S=\left\{ \varphi_{i1}^{D},\varphi_{1j}^{D}|i\neq\alpha+1,j\neq\beta+1\right\} $
and let $\tilde{S}$ be the restrictions of these functions to the
diagonal subgroup.

Though the following proposition is not required for the proof of
the main theorem, it does give further information about the cluster
structure: the set $\tilde{S}$ will be the set of stable variables.
As indicated in \cite{gekhtman2013exotic}, in all known cluster structures
on Poisson varieties, the frozen variables have two important properties:
they behave well under certain natural group actions, and they are
log canonical with certain globally defined coordinate functions.
Proposition (\ref{prop:StblVars}) states that these two properties
hold in our case, and therefore supports the choice of $\tilde{S}$
as the set of stable variables. 
\begin{prop}
\label{prop:StblVars}1. The elements of $S$ are semi-invariants
of the left and right action of $D_{-}$ in $D\left(GL_{n}\right)$. 

2. The elements of $\tilde{S}$ are log canonical with all matrix
entries $x_{ij}$.\end{prop}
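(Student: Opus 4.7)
The plan splits into three stages: describe $D_-\subset D(GL_n)$ explicitly, use this to verify the semi-invariance in part (1), and then derive part (2) from the semi-invariance via the Sklyanin formula. First, I compute $R_\pm$ from \eqref{eq:RplsDef} applied to the R-matrix \eqref{eq:RmtxCons}: in the standard basis of $\mathfrak{gl}_n$, the operator $R_+$ is the projection onto strictly upper-triangular matrices plus an operator on $\mathfrak{h}$ determined by $r_0$, and $R_-$ has the analogous lower-triangular form. Hence $\mathfrak{d}_-$ is a subalgebra of $\mathfrak{b}_+\oplus\mathfrak{b}_-$ whose diagonal part is cut out by the linear relation \eqref{eq:r0Cond1}, which for $|\Gamma_1|=1$ ties a single combination of the $\alpha$-th and $\beta$-th diagonal entries. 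Exponentiating gives a codimension-one subgroup $D_-\subset B_+\times B_-$.

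For part (1), every function in $S$ is a contiguous trailing minor of $X$ or $Y$, so under left multiplication $(X,Y)\mapsto(g_+X,g_-Y)$ by an element of $D_-$ such a determinant scales by a product of diagonal entries of $g_+$ or $g_-$ running over a specific index range; the right action is analogous. I would compute the resulting weight for every $\varphi_{i1}^D$ and $\varphi_{1j}^D$, and verify that it descends to a character of $D_-$, i.e., is compatible with the $r_0$-constraint. The weights for all $i\neq\alpha+1$ and $j\neq\beta+1$ descend; the two excluded indices yield precisely the linear functional obstructed by the constraint. This is exactly why in $\mathcal{B}_{\alpha\beta}$ those two functions are replaced by the twisted minors $\theta_1$ and $\psi_1$, whose two-block $\tilde M$-structure reassembles a legitimate character of $D_-$.

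For part (2), I would compute $\{\tilde f,x_{ij}\}_{\alpha\beta}$ directly from \eqref{eq:sklnPB}. Each $\tilde f\in\tilde S$ is a trailing contiguous minor of $X$, so its left- and right-invariant derivatives act by explicit minor-column/row expansion rules, and the $\mathfrak{d}_-$-directional derivatives collapse by the semi-invariance established in part (1) into scalar multiples of $\tilde f$. After cancellations the sum in \eqref{eq:sklnPB} reduces to $c_{ij}\,\tilde f\,x_{ij}$ for a rational scalar $c_{ij}$; equivalently, one can invoke the standard Poisson--Lie principle that bi-semi-invariants of $D_-$ are log-canonical with every coordinate function.

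The main obstacle is the bookkeeping in part (1): writing the $r_0$-constraint as an explicit linear condition on diagonal characters, and then matching it against the weight of every $\varphi_{i1}^D$ and $\varphi_{1j}^D$ to pinpoint the two exceptions. Once this matching is carried out, the structural description of $D_-$ gives (1) at once, and (2) follows formally from the semi-invariance together with the expansion of \eqref{eq:sklnPB} along the decomposition $D(\mathfrak{gl}_n)=\mathfrak{d}_+\oplus\mathfrak{d}_-$.
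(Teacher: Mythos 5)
There is a genuine gap, and it starts with your structural description of $D_-$. For a nontrivial BD triple the operator $R_+$ is \emph{not} just ``upper-triangular projection plus a diagonal operator'': by Lemma \ref{lem:TheOperatorRpls} it also contains the term $R_{BD}(\eta)=\eta_{\alpha,\alpha+1}e_{\beta,\beta+1}-\eta_{\beta+1,\beta}e_{\alpha+1,\alpha}$, so $\mathfrak{d}_-$ is \emph{not} contained in $\mathfrak{b}_+\oplus\mathfrak{b}_-$ and $D_-$ is not a codimension-one subgroup of $B_+\times B_-$ cut out by a diagonal relation. The correct picture (and the one the paper uses) is that the two factors are block-triangular, each with a $GL_2$ block coupling rows/columns $\alpha,\alpha+1$ in one factor and $\beta,\beta+1$ in the other. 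This matters decisively: under your description every trailing contiguous minor $\varphi^D_{i1},\varphi^D_{1j}$ would scale by a product of diagonal entries and hence be semi-invariant with no exceptions (restricting the diagonal torus by the $r_0$-constraint \eqref{eq:r0Cond1} can only make \emph{more} functions semi-invariant, since a character of the full torus restricts to a character of any subtorus), so your proposed mechanism cannot single out $i=\alpha+1$, $j=\beta+1$. The actual obstruction is the $GL_2$-block mixing: a minor whose row (resp.\ column) set contains exactly one of the paired indices $\alpha,\alpha+1$ (or $\beta,\beta+1$) is not semi-invariant, and this is precisely why the two excluded minors are dropped and why the two-block matrices $\tilde M$ (whose $X$- and $Y$-blocks each contain both paired rows and columns or neither) are semi-invariant. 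Your statement that the twisted functions ``reassemble a legitimate character'' is asserted but not derivable from your setup.

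Part (2) also has a gap. You claim it follows ``formally'' from semi-invariance via a principle that bi-semi-invariants of $D_-$ are log-canonical with every coordinate function; no such general principle is established (semi-invariance under a subgroup action does not by itself control brackets with arbitrary matrix entries), and indeed the paper treats the two properties as separate verifications. Moreover your reduction assumes every element of $\tilde S$ is a trailing contiguous minor of $X$, which fails exactly for the two delicate functions $\theta=\varphi_{n-\alpha+1,1}$ and $\psi=\varphi_{1,n-\beta+1}$: these are the determinants of the mixed $\tilde M$ matrices, of the form $f\,f'-f^{\rightarrow}f'^{\leftarrow}$, and for them one must compute $\{\cdot,x_{ij}\}_{\alpha\beta}$ by comparing with the standard bracket via Lemma \ref{lem:PsnBrcktDiff} and controlling the correction terms in the exceptional columns $\alpha$ and $\beta+1$ (an argument along the lines of Lemma \ref{lem:SumCfeq0}), as the paper does. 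For the functions in $\tilde S\cap\mathcal{B}_{std}$ the statement does reduce to the standard case via Lemma \ref{lem:PsnBrcktDiff}, but the two mixed functions require this additional computation, which your outline omits.
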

\begin{proof}
1. The subgroup $D_{-}$ of $D\left(GL_{n}\right)$ that corresponds
to the subalgebra $\mathfrak{g}_{-}$ of $\mathfrak{g}$ is given
by 
\[
D_{-}=\left(U,L\right)
\]
 with 
\[
U=\left[\begin{array}{cccccc}
a_{1} & \star & \star & \star & \star & \star\\
0 & \ddots & \star & \star & \star & \star\\
0 & 0 & a_{\alpha-1} & \star & \star & \star\\
0 & 0 & 0 & A & \star & \star\\
0 & 0 & 0 & 0 & a_{\alpha+2} & \star\\
0 & 0 & 0 & 0 & 0 & \ddots
\end{array}\right]
\]
 and 
\[
L=\left[\begin{array}{ccccccc}
a_{n+\alpha-\beta+1} & 0 & 0 & 0 & 0 & 0 & 0\\
\star & \ddots & 0 & 0 & 0 & 0 & 0\\
\star & \star & a_{n} & 0 & 0 & 0 & 0\\
\star & \star & \star & \ddots & 0 & 0 & 0\\
\star & \star & \star & \star & a_{\alpha-1} & 0 & 0\\
\star & \star & \star & \star & \star & A & 0\\
\star & \star & \star & \star & \star & \star & \ddots
\end{array}\right].
\]
 where $A\in GL_{2}$ and the indices of the diagonal entries $a_{i}$
are taken modulo $n$. The $\star$'s will not play any role in further
computations. The left and right action of $D_{-}$ can be parametrized
by 
\[
\left(X,Y\right)\mapsto\left(A_{1}XA'_{1},A_{2}YA'_{2}\right)
\]
 with matrices 
\[
A_{1}=\left[\begin{array}{cccccc}
a_{1} & \star & \star & \star & \star & \star\\
0 & \ddots & \star & \star & \star & \star\\
0 & 0 & a_{\alpha-1} & \star & \star & \star\\
0 & 0 & 0 & A & \star & \star\\
0 & 0 & 0 & 0 & a_{\alpha+2} & \star\\
0 & 0 & 0 & 0 & 0 & \ddots
\end{array}\right],
\]
 
\[
A'_{1}=\left[\begin{array}{cccccc}
a'_{1} & \star & \star & \star & \star & \star\\
0 & \ddots & \star & \star & \star & \star\\
0 & 0 & a'_{\alpha-1} & \star & \star & \star\\
0 & 0 & 0 & A' & \star & \star\\
0 & 0 & 0 & 0 & a'_{\alpha+2} & \star\\
0 & 0 & 0 & 0 & 0 & \ddots
\end{array}\right],
\]
 
\[
A_{2}=\left[\begin{array}{ccccccc}
a_{n+\alpha-\beta+1} & 0 & 0 & 0 & 0 & 0 & 0\\
\star & \ddots & 0 & 0 & 0 & 0 & 0\\
\star & \star & a_{n} & 0 & 0 & 0 & 0\\
\star & \star & \star & \ddots & 0 & 0 & 0\\
\star & \star & \star & \star & a_{\alpha-1} & 0 & 0\\
\star & \star & \star & \star & \star & A & 0\\
\star & \star & \star & \star & \star & \star & \ddots
\end{array}\right],
\]
 and 
\[
A'_{2}=\left[\begin{array}{ccccccc}
a'_{n+\alpha-\beta+1} & 0 & 0 & 0 & 0 & 0 & 0\\
\star & \ddots & 0 & 0 & 0 & 0 & 0\\
\star & \star & a'_{n} & 0 & 0 & 0 & 0\\
\star & \star & \star & \ddots & 0 & 0 & 0\\
\star & \star & \star & \star & a'_{\alpha-1} & 0 & 0\\
\star & \star & \star & \star & \star & A' & 0\\
\star & \star & \star & \star & \star & \star & \ddots
\end{array}\right].
\]
 There are three kinds of functions in $S$: minors of $X$, minors
of $Y$ and ``mixed'' functions. A function $f_{X}\in S$ that is
a minor of $X$ is a semi-invariant of this action: $f_{X}$ is the
determinant of a submatrix $X_{\left[i,n\right]}^{\left[1,\mu\right]}$
with $\mu=n-i+1$. One has $i\notin\left\{ \alpha+1,n-\alpha+1\right\} $
(see the construction in Section \ref{sub:Constructing-a-log}). The
action of $D_{-}$ multiplies each row $k\in[i,n]$ of $X$ by the
corresponding entry $a_{k}$ and each column $\ell\in[\mu]$ of $X$
by $a'_{\ell+n+\alpha-\beta-1}$, with two exceptions: rows $\alpha$
and $\alpha+1$ are multiplied together by $A$, and columns $\alpha$
and $\alpha+1$ are multiplied by $A'$. So as long as one of these
rows (or columns) does not occur in the submatrix $X_{\left[i,n\right]}^{\left[1,\mu\right]}$
without the other, $f_{X}$ is still a semi-invariant of the action.
If $\alpha\in[i,n]$ then clearly $\alpha+1\in[i,n].$ On the other
hand, the only case with $\alpha+1\in[i,n]$ and $\alpha\notin[i,n]$
is when $i=\alpha+1.$ But such a minor can not be in $S$ according
to the construction (Section \ref{sub:Constructing-a-log}). Looking
at columns, it is easy to see that if $\alpha+1\in[1,\mu]$ (that
is, the column $\alpha+1$ occurs in the submatrix $X_{\left[i,n\right]}^{\left[1,\mu\right]}$),
then $\alpha\in[1,\mu]$. The only way to have $\alpha\in[1,\mu]$
and $\alpha+1\notin[1,\mu]$ is $\mu=\alpha$. But this implies $i=n-\alpha+1$,
and this minor is also not in the set $S$.

For $f_{Y}\in S$ which is a minor of $Y$ similar arguments hold. 

Look now at the function 
\[
\theta=\det\left[\begin{array}{cccccc}
x_{i1} & \cdots & x_{i\alpha} & x_{i,\alpha+1} & 0 & \cdots\\
\vdots & \ddots & \vdots & \vdots & \vdots\\
x_{n1} & \cdots & x_{n\alpha} & x_{n,\alpha+1} & 0 & \cdots\\
0 & \cdots & y_{1\beta} & y_{1\beta+1} & \cdots & y_{1n}\\
\vdots &  & \vdots &  & \ddots & \vdots\\
0 & \cdots & y_{\mu\beta} & \cdots & \cdots & y_{\mu n}
\end{array}\right],
\]
 with $i=n+1-\alpha$. It is not hard to see that $\theta$ is a semi
invariant of the action of $D_{-}$: the block of $x_{ij}$'s is subject
to the same arguments as above, except for when $\alpha=\frac{n}{2}$,
which will be treated later. The same holds for the block of $y_{ij}$'s,
unless $\beta=\frac{n}{2}$. Therefore $\theta$ is a semi-invariant
of this action. 

Symmetric arguments show that 
\[
\psi=\det\left[\begin{array}{cccccc}
y_{1,n+1-\beta} & \cdots & y_{1n} & 0 & \cdots & 0\\
\vdots & \ddots & \vdots & \vdots &  & \vdots\\
y_{\beta j} & \cdots & y_{\beta n} & x_{\alpha1} & \cdots & x_{\alpha\mu}\\
y_{\beta+1,j} & \cdots & y_{\beta+1,n} & x_{\alpha+1,1} &  & \vdots\\
0 & \cdots & 0 & \vdots & \ddots & \vdots\\
\vdots &  & \vdots & x_{n1} & \cdots & x_{n\mu}
\end{array}\right]
\]
 is a also semi-invariant.

Last, we look at the special case $\alpha=\frac{n}{2}$: here there
is only one matrix in $\mathcal{M}$ with elements of both $X$ and
$Y$. The ``building blocks'' of this matrix are submatrices of
$X$ and $Y$ that satisfy the restrictions above, so the determinant
of this matrix is also a semi-invariant of the action. The case $\beta=\frac{n}{2}$
is symmetric. 

2. First, look at a function $\varphi\in\tilde{S}\cap\mathcal{B}_{std}$.
In this case it is not hard to see that $\left\{ \varphi,x_{ij}\right\} _{\alpha\beta}=\left\{ \varphi,x_{ij}\right\} _{std}$
(according to Lemma \ref{lem:PsnBrcktDiff}) and therefore $\varphi$
is log canonical with all $x_{ij}$. There are only two other functions
in $\tilde{S}$: $\theta=\varphi_{n-\alpha+1,1}$ and $\psi=\varphi_{1,n-\beta+1}$.
Start with $\theta=f_{n-\alpha+1,1}f_{1,\beta+1}-f_{n-\alpha+1,1}^{\rightarrow}f_{1,\beta+1}^{\leftarrow}$.
Following the line of the proof of Lemma \ref{lem:SumCfeq0}, it can
be shown that $f_{n-\alpha+1,1}^{\rightarrow}$ is log canonical with
every $x_{ij}$ with $j\neq\alpha$, and similarly $f_{1,\beta+1}^{\leftarrow}$
is log canonical with every $x_{ij}$ with $j\neq\beta+1$, with respect
to the standard bracket. The cases $j=\alpha$ and $j=\beta+1$ are
exactly the cases when the standard bracket and the $\alpha\beta$
bracket do not coincide. Adding the difference that was described
in Lemma \ref{lem:PsnBrcktDiff}, 
\[
\left\{ f_{n-\alpha+1,1},x_{ij}\right\} _{\alpha\beta}=\begin{cases}
\left\{ f_{n-\alpha+1,1},x_{ij}\right\} _{std} & \text{ if }j\neq\beta+1\\
\left\{ f_{n-\alpha+1,1},x_{ij}\right\} _{std}+f_{n-\alpha+1,1}^{\rightarrow}x_{i\beta} & \text{ if }j=\beta+1,
\end{cases}
\]
 and 
\[
\left\{ f_{1,\beta+1},x_{ij}\right\} _{\alpha\beta}=\begin{cases}
\left\{ f_{1,\beta+1},x_{ij}\right\} _{std} & \text{ if }j\neq\alpha\\
\left\{ f_{1,\beta+1},x_{ij}\right\} _{std}+f_{1,\beta+1}^{\leftarrow}x_{i,\alpha+1} & \text{ if }j=\alpha,
\end{cases}
\]
 shows that with respect to the bracket $\left\{ \cdot,\cdot\right\} _{\alpha\beta}$,
the pairs $f_{n-\alpha+1,1}^{\rightarrow},x_{i\alpha}$ and $f_{1,\beta+1}^{\leftarrow},x_{i,\beta+1}$
are log canonical. The coefficients $\omega_{f_{n-\alpha+1,1}^{\rightarrow},x_{ij}}=\frac{\left\{ f_{n-\alpha+1,1}^{\rightarrow},x_{ij}\right\} }{f_{n-\alpha+1,1}^{\rightarrow}\cdot x_{ij}}$
and $\omega_{f_{1,\beta+1}^{\leftarrow},x_{ij}}=\frac{\left\{ f_{1,\beta+1}^{\leftarrow},x_{ij}\right\} }{f_{1,\beta+1}^{\leftarrow}\cdot x_{ij}}$
can be computed like in the proof of Lemma \ref{lem:SumCfeq0}, showing
that $\theta$ is log canonical with $x_{ij}$. Symmetric arguments
hold for $\psi$.
\end{proof}
Note that the set of stable variables $\tilde{S}$ is the set of determinants
of all matrices in the set $\mathcal{M}_{\alpha\beta}$. By its definition
in \ref{eq:Mabdef}, $\mathcal{M}_{\alpha\beta}$ has  $2\left(n-1\right)-2$
matrices, and therefore 
\[
\left|\tilde{S}\right|=2\left(n-1\right)-2=2\left|\Delta\setminus\Gamma_{1}\right|,
\]
 as in Statement \ref{Conj:NumStbVar} of Conjecture \ref{Conj:GSV-BD-CS}.

\subsection{The quiver $Q_{\alpha\beta}^{n}$}

\label{sub:The-quiver}

To describe the quiver $Q_{\alpha\beta}^{n},$ start with the standard
quiver $Q_{std}^{n}$ as given in Section \ref{sub:BisLC}. The vertex
in the $i$-th row and $j$-th column corresponds to the cluster variable
$f_{ij}=\det X_{\left[i,\mu\left(i,j\right)\right]}^{\left[j,\mu\left(i,j\right)\right]}.$
The quiver of the exotic cluster structure with BD data $\alpha\mapsto\beta$
is very close: a vertex $\left(i,j\right)$ now represents the cluster
variable $\varphi_{ij}$, and the quiver takes these changes: 
\begin{enumerate}
\item Vertices $\left(\alpha+1,1\right)$ and $\left(1,\beta+1\right)$
are not frozen. 
\item The arrows $\left(\alpha,1\right)\to\left(\alpha+1,1\right)$ and
$\left(1,\beta\right)\to\left(1,\beta+1\right)$ are added. 
\item The arrows $\left(n,\alpha+1\right)\to\left(1,\beta+1\right),\ \left(1,\beta+1\right)\to\left(n,\alpha\right)$
are added. 
\item The arrows $\left(\beta+1,n\right)\to\left(\alpha+1,1\right),\ \left(\alpha+1,1\right)\to\left(\beta,n\right)$
are added. 
\end{enumerate}
The example of $1\mapsto2$ on $SL_{5}$ is given in Figure \ref{fig:GL51to2}.
The dashed arrows are the arrows that were added to the standard quiver.

\begin{figure}
\begin{centering}
\includegraphics[scale=0.45]{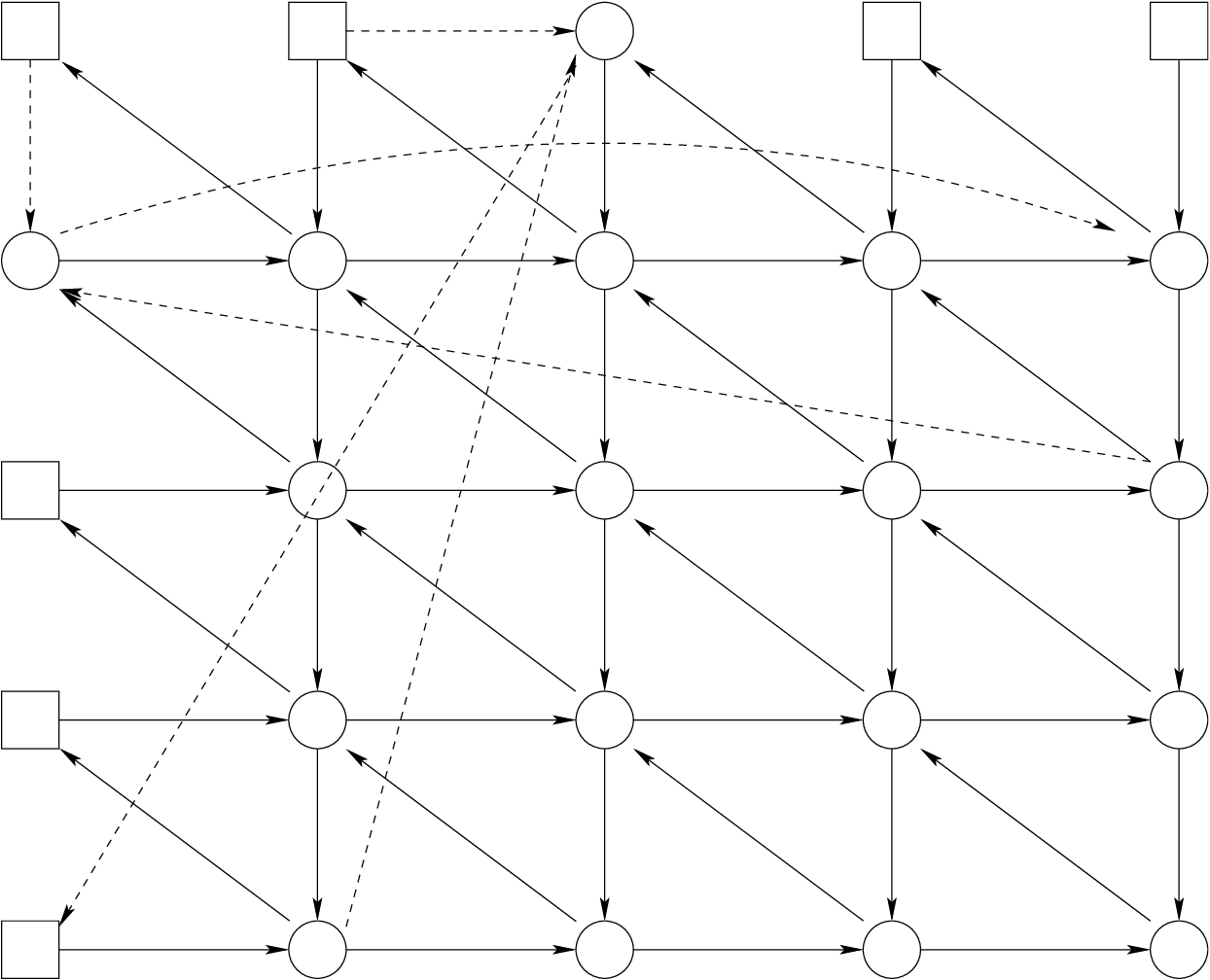} 
\par\end{centering}

\caption{The $1\protect\mapsto2$ quiver for $GL_{5}$}

\centering{}\label{fig:GL51to2} 
\end{figure}

Through this section we use $B,\omega,\Omega$ to denote the exchange
matrix, Poisson coefficients and Poisson matrix in the standard case,
and $\overline{B},\overline{\Omega},\overline{\omega}$ for their
counterparts in the $\alpha\mapsto\beta$ case. We now prove that
the cluster structure described in the previous section is indeed
compatible with the bracket $\left\{ \cdot,\cdot\right\} _{\alpha\beta}$:
\begin{thm}
\label{thm:Compatible}The cluster structure $\mathcal{C}_{\alpha\beta}=\mathcal{C}\left(\overline{B}_{\alpha\beta}\right)$
is compatible with the bracket $\left\{ \cdot,\cdot\right\} _{\alpha\beta}$. \end{thm}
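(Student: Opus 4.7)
The plan is to apply Proposition~\ref{prop:PoissCompStruc}: verify that the extended exchange matrix $\overline{B}_{\alpha\beta}$ has full rank, and that $\overline{B}_{\alpha\beta}\,\overline{\Omega} = [D\ 0]$ for some diagonal matrix $D$, where $\overline{\Omega}$ is the coefficient matrix of $\{\cdot,\cdot\}_{\alpha\beta}$ with respect to the extended cluster $\mathcal{B}_{\alpha\beta}$. The existence of the coefficient matrix $\overline{\Omega}$ itself is exactly Theorem~\ref{thm:BisLC}.

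For full rank, I would begin with the standard matrix $B_{std}$, which is known to have full rank \cite{BFZ,gekhtman2012cluster}. Then $\overline{B}_{\alpha\beta}$ differs from $B_{std}$ only by (i) unfreezing the two variables $\varphi_{\alpha+1,1}$ and $\varphi_{1,\beta+1}$, which promotes two columns of the frozen block into two new rows, and (ii) adding the four arrows listed in Section~\ref{sub:The-quiver}. A direct rank argument on the two new rows — each acquiring a fresh nonzero entry from one of the added arrows $(\alpha,1)\to(\alpha+1,1)$ and $(1,\beta)\to(1,\beta+1)$ — shows that maximal rank is preserved.

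The bulk of the proof is the verification of $\overline{B}_{\alpha\beta}\,\overline{\Omega} = [D\ 0]$, carried out row by row. The analogous identity $B_{std}\,\Omega_{std} = [D_{std}\ 0]$ is part of the standard theory. For a row indexed by a mutable variable $\varphi_{ij}$ whose $Q_{\alpha\beta}$-neighbors all lie in $\mathcal{B}_{\alpha\beta}\cap\mathcal{B}_{std}$, the entries of that row in $\overline{B}_{\alpha\beta}$ agree with those in $B_{std}$, and by Corollary~\ref{cor:fginSbrckteq} the relevant entries of $\overline{\Omega}$ coincide with those of $\Omega_{std}$; such rows inherit the required identity verbatim from the standard case. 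For the remaining \emph{perturbed} rows — those indexed by a non-standard function $\theta_k$ or $\psi_m$, by one of the newly-mutable $\varphi_{\alpha+1,1}$, $\varphi_{1,\beta+1}$, or by a vertex adjacent in $Q_{\alpha\beta}$ to the site of the quiver modifications — I substitute the formulas
\[
\overline{\omega}_{\theta_k,g} = \omega_{f_{n+k-\alpha,k},g} + \omega_{f_{1,\beta+1},g}, \qquad \overline{\omega}_{\psi_m,g} = \omega_{f_{m,n+m-\beta},g} + \omega_{f_{\alpha+1,1},g}
\]
derived in the proof of Theorem~\ref{thm:BisLC}, together with their refinements for pairs of non-standard functions, and combine them with the four new arrows in $\overline{B}_{\alpha\beta}$. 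The cancellations that force the off-diagonal entries of $\overline{B}_{\alpha\beta}\,\overline{\Omega}$ to vanish are governed by the same identity that underlies Lemma~\ref{lem:SumCfeq0}. The involution $\alpha \leftrightarrow \beta$, which swaps $\theta_k$ with $\psi_m$ and transposes the quiver, halves the case analysis.

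The main obstacle will be the bookkeeping for the perturbed rows: two new mutable variables, four new arrows, and non-standard functions appearing both as row-labels and as neighbors interact, so each correction term in $\overline{\omega}$ must be matched against a correction in $\overline{B}_{\alpha\beta}$. Grouping the contributions by the source of the correction — namely the ``completion'' factors $f_{1,\beta+1}$ and $f_{1,\beta+1}^{\leftarrow}$ on the $\theta_k$ side and $f_{\alpha+1,1}$ and $f_{\alpha+1,1}^{\uparrow}$ on the $\psi_m$ side — parallels the block structure of the proof of Theorem~\ref{thm:BisLC} and should make the required identities transparent.
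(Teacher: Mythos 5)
Your plan is essentially the paper's proof: invoke Proposition \ref{prop:PoissCompStruc}, and verify $\overline{B}_{\alpha\beta}\overline{\Omega}=\left[D\ 0\right]$ row by row against the standard identity, using Corollary \ref{cor:fginSbrckteq} for the untouched rows, the correction formulas $\overline{\omega}_{\theta_k,g}=\omega_{f_{n+k-\alpha,k},g}+\omega_{f_{1,\beta+1},g}$ (and the $\psi$-analogue) for the perturbed rows, Lemma \ref{lem:SumCfeq0} for the residual cancellations, and the $\alpha\leftrightarrow\beta$ symmetry to halve the cases; the paper does exactly this and finds $D=I$. One difference of organization: the separate full-rank verification you propose is both shakier than you suggest and unnecessary. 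The two new rows are not singled out by a ``fresh'' nonzero entry (for instance, the rows indexed by $(2,\beta)$ and $(2,\beta+1)$ also have nonzero entries in the column $(1,\beta)$), so that argument needs more care; but since the row-by-row computation gives $D=I$, the identity $\overline{B}\,\overline{\Omega}=\left[I\ 0\right]$ itself forces $\rank\overline{B}$ to be maximal, which is how the paper obtains it.

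The genuine gap is in your treatment of the rows of the newly unfrozen variables $\varphi_{1,\beta+1}$ and $\varphi_{\alpha+1,1}$. For these there is no standard row at all (they were frozen in $\mathcal{C}_{std}$), so ``matching each correction term in $\overline{\omega}$ against a correction in $\overline{B}_{\alpha\beta}$'' has nothing to match against, and the identity must be produced from scratch. The paper does this by telescoping the standard compatibility relations at the vertices $(i,\beta+1)$, $i=2,\dots,n$, down the column (equations \eqref{eq:ExReat2b+1}--\eqref{eq:recExrltb+1}), which reduces the signed sum of coefficients over the neighbors of $(1,\beta+1)$ to the difference $\omega_{f_{n\beta},f_q}-\omega_{f_{n,\beta+1},f_q}$; only then is Lemma \ref{lem:SumCfeq0} used to trade this for $\omega_{f_{n\alpha},f_q}-\omega_{f_{n,\alpha+1},f_q}$, with the exceptional values $s\omega_{\alpha\beta}(g)=\pm1$ accounting precisely for the diagonal entry $1$ and for the cases $q=(i,\beta+1)$. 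You cite the right lemma, but the telescoping reduction is where the bulk of the work lies, and as written that case of your plan is incomplete.
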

\begin{proof}
According to Proposition \ref{prop:PoissCompStruc}, it is sufficient
to prove 
\begin{equation}
\bar{B}\bar{\Omega}=\left[I\ 0\right],\label{eq:Compatibility}
\end{equation}
 which can be rephrased as 
\[
\left(\bar{B}\bar{\Omega}\right)_{pq}=\sum_{p\leftarrow k}\overline{\omega}_{kq}-\sum_{p\rightarrow k}\overline{\omega}_{kq}=\begin{cases}
1 & p=q\\
0 & p\neq q,
\end{cases}
\]
 where the first sum is over vertices $k$ with an arrow pointing
from $k$ to $p$, and the second sum is over vertices $k$ with an
arrow pointing from $p$ to $k$. Recall that the standard case has
\[
\sum_{p\leftarrow k}\omega_{kq}-\sum_{p\rightarrow k}\omega_{kq}=\begin{cases}
1 & p=q\\
0 & p\neq q.
\end{cases}
\]
 Label a row (or column) of the exchange matrices $B$ and $\overline{B}$
by $\left(i,j\right)$ if it corresponds to the cluster variable $f_{ij}$
(in $B$) or $\varphi_{ij}$ (in $\overline{B}$). Now compute $\left(\bar{B}\bar{\Omega}\right)_{pq}$
in the following cases:

1. The $p$-th row of $\overline{B}$ is equal to the $p$-th row
of $B$. This is true for almost all rows of $\overline{B}$, or more
precisely when 
\begin{equation}
p\notin\left\{ \left(1,\beta+1\right),\left(n,\alpha\right),\left(n,\alpha+1\right),\left(\alpha+1,1\right),\left(\beta,n\right),\left(\beta+1,n\right)\right\} ,\label{eq:SetNewRows}
\end{equation}
and we have these possible situations:

(a) $p$ corresponds to a cluster variable in $\mathcal{B}_{std}\cap\mathcal{B}_{\alpha\beta}$.

i. Assume all cluster variables adjacent to $p$ are in $\mathcal{B}_{std}\cap\mathcal{B}_{\alpha\beta}$.
If $q$ is also in $\mathcal{B}_{std}\cap\mathcal{B}_{\alpha\beta}$,
this is just the same as the standard case, and $\left(\bar{B}\bar{\Omega}\right)_{pq}=\left(B\Omega\right)_{pq}.$
If $q$ is not a standard basis function, then either it corresponds
to a cluster variable of the form $\varphi_{n+m-\alpha,m}$, and then
\[
\overline{\omega}_{kq}=\omega_{kq}+\omega_{k,f_{n,\alpha+1}}-\omega_{k,f_{n\alpha}}
\]
 or one of the form $\varphi_{m,n+m-\beta}$, and then 
\[
\overline{\omega}_{kq}=\omega_{kq}+\omega_{k,f_{\beta+1,n}}-\omega_{k,f_{\beta n}}.
\]
So in the first case, 
\begin{eqnarray*}
\sum_{p\leftarrow k}\overline{\omega}_{kq}-\sum_{p\rightarrow k}\overline{\omega}_{kq} & = & \sum_{p\leftarrow k}\omega_{kq}-\sum_{p\rightarrow k}\omega_{kq}+\sum_{p\leftarrow k}\omega_{k,f_{n,\alpha+1}}\\
 &  & -\sum_{p\rightarrow k}\omega_{k,f_{n,\alpha+1}}-\sum_{p\leftarrow k}\omega_{k,f_{n\alpha}}+\sum_{p\rightarrow k}\omega_{k,f_{n\alpha}}
\end{eqnarray*}
and since $p\neq\left(n,\alpha\right),\left(n,\alpha+1\right)$ (from
\eqref{eq:SetNewRows}), the standard case tells us 
\[
\sum_{p\leftarrow k}\omega_{k,f_{n,\alpha+1}}-\sum_{p\rightarrow k}\omega_{k,f_{n,\alpha+1}}=\sum_{p\leftarrow k}\omega_{k,f_{n\alpha}}-\sum_{p\rightarrow k}\omega_{k,f_{n\alpha}}=0.
\]
 The case of $\varphi_{m,n+m-\beta}$ is symmetric. 

ii. The cluster variable $p$ has at least one neighbor that is not
in $\mathcal{B}_{std}\cap\mathcal{B}_{\alpha\beta}$. 

Looking at the quiver one can easily see that the number of such neighbors
can be either one or two.

A. $p$ has exactly one such neighbor. The quiver has only two such
vertices: $p=\left(n,\alpha+1\right)$ or $p=\left(\beta+1,n\right)$.
In both cases it means that the $i$-th row of $\overline{B}$ is
different from that row of $B$, because the quiver $Q_{\alpha\beta}^{n}$
has arrows $\left(n,\alpha+1\right)\to\left(1,\beta+1\right)$ and
$\left(\beta+1,n\right)\to\left(\alpha+1,1\right)$, which $Q_{std}^{n}$
does not have. This case will be handled later on.

B. $p$ has two non standard neighbors. In this case these two neighbors
are connected to $p$ by arrows in opposite directions (i.e., one
of them is pointing at $p$ and the other one from $p$). These two
non standard neighbors must both belong to the same ``family'' of
functions, either $\left\{ \psi_{m}\right\} $ or $\left\{ \theta_{m}\right\} $
(as defined in Section \ref{sub:BisLC}). We have seen that the Poisson
coefficients of these function differ from their standard counterparts
by a constant, e.g., for every function $g\in\mathcal{B}_{std}$,
\[
\omega_{\varphi_{n+m-\alpha,m},g}=\omega_{f_{n+m-\alpha,m},g}+\omega_{f_{1,\beta+1},g}.
\]
When summing over all neighbors of $p$, this constant is then added
once, for the vertex with an arrow pointing at $p$, and subtracted
once, for the vertex with an arrow pointing from $p$ to it. These
cancel each other and the sum remains as it was in the standard case. 

(b) $p$ is not in $\mathcal{B}_{std}\cap\mathcal{B}_{\alpha\beta}$,
which means $p=\left(n+m-\alpha,m\right)$ or $p=\left(m,n+m-\beta\right)$.
Assume $m<\alpha$ (for the first one) or $m<\beta$ (second), because
$p=\left(n,\alpha\right)$ and $p=\left(\beta,n\right)$ are in \eqref{eq:SetNewRows}
and will be treated later. If $m=1$ it is a frozen variable. Again,
look at the first case (second is just the same): if $1<m<\alpha$
then two neighbors of $p=\left(n+m-\alpha,m\right)$ are non standard.
These are $\left(n+m+1-\alpha,m+1\right)$ and $\left(n+m-1-\alpha,m-1\right)$
with arrows pointing in opposite directions. Since we know that 
\[
\overline{\omega}_{f_{n+m-\alpha,m},q}=\omega_{f_{n+m-\alpha,m},q}+\omega_{f_{1,\beta+1},q}
\]
and therefore a constant is added to the sum for the vertex
$\left(n+m+1-\alpha,m+1\right)$ and then subtracted for the vertex
$\left(n+m-1-\alpha,m-1\right)$. This constant is added to all $\omega$'s
in the sum, and they cancel each other.

2. Here the $p$-th row of $\overline{B}$ is not equal to the $p$-th
row of $B$. 

(a) If $p=\left(1,\beta+1\right)$ then $B$ does not have this row
(it was a frozen variable in the standard case). Its neighbors are
now $\varphi_{n,\alpha+1},\varphi_{2,\beta+2},\varphi_{1\beta}$ with
arrows pointing to $p$, and $\varphi_{2,\beta+1},\varphi_{n\alpha}$
with arrows from $p$ to them . So we have
\begin{eqnarray}
\sum_{p\leftarrow k}\overline{\omega}_{kq}-\sum_{p\rightarrow k}\overline{\omega}_{kq} & = & +\overline{\omega}_{\varphi_{1\beta},\varphi_{q}}+\overline{\omega}_{\varphi_{n,\alpha+1},\varphi_{q}}+\overline{\omega}_{\varphi_{2,\beta+2},\varphi_{q}}\label{eq:CfSumat1b+1}\\
 &  & -\overline{\omega}_{\varphi_{2,\beta+1},\varphi_{q}}+\overline{\omega}_{\varphi_{n\alpha},\varphi_{q}}\nonumber \\
 & = & \omega_{f_{1\beta},f_{q}}+\omega_{f_{n,\alpha+1},f_{q}}+\omega_{f_{2,\beta+2},f_{q}}\nonumber \\
 &  & -\omega_{f_{2,\beta+1},f_{q}}-\omega_{f_{n\alpha},f_{q}}-\omega_{f_{1,\beta+1},f_{q}}\nonumber 
\end{eqnarray}
 In the standard case, since exchange relation must hold at $\left(2,\beta+1\right)$
we can write 
\begin{equation}
\omega_{f_{1,\beta},f_{j}}+\omega_{f_{2,\beta+2},f_{j}}+\omega_{f_{3,\beta+1},f_{j}}-\omega_{f_{1,\beta+1},f_{j}}-\omega_{f_{2,\beta},f_{j}}-\omega_{f_{3,\beta+2},f_{j}}=\begin{cases}
1 & j=\left(2,\beta+1\right)\\
0 & j\neq\left(2,\beta+1\right)
\end{cases}\label{eq:ExReat2b+1}
\end{equation}
and we continue, using standard exchange relation at $\left(i,\beta+1\right)$
\[
\omega_{f_{i-1,\beta},f_{j}}+\omega_{f_{i,\beta+2},f_{j}}+\omega_{f_{i+1,\beta+1},f_{j}}-\omega_{f_{i-1,\beta+1},f_{j}}-\omega_{f_{i,\beta},f_{j}}-\omega_{f_{i+1,\beta+2},f_{j}}=\begin{cases}
1 & j=\left(i,\beta+1\right)\\
0 & j\neq\left(i,\beta+1\right)
\end{cases}
\]
 and assuming $j\neq\left(i,\beta+1\right)$
\begin{equation}
\omega_{f_{i-1,\beta},f_{j}}-\omega_{f_{i-1,\beta+1},f_{j}}+\omega_{f_{i,\beta+2},f_{j}}=\omega_{f_{i,\beta},f_{j}}-\omega_{f_{i+1,\beta+1},f_{j}}+\omega_{f_{i+1,\beta+2},f_{j}}\label{eq:ExReatib+1}
\end{equation}
and eventually for $i=n$
\[
\omega_{f_{n,\beta},f_{j}}+\omega_{f_{n-1,\beta+1}f_{j}}-\omega_{f_{n-1,\beta},f_{j}}-\omega_{f_{n,\beta+2},f_{j}}=\begin{cases}
1 & j=\left(n,\beta+1\right)\\
0 & j\neq\left(n,\beta+1\right)
\end{cases}.
\]
 The standard exchange relation at $\left(n,\beta+1\right)$ implies
\[
\omega_{f_{n,\beta},f_{j}}+\omega_{f_{n-1,\beta+1},f_{j}}-\omega_{f_{n-1,\beta},f_{j}}-\omega_{f_{n,\beta+2},f_{j}}=0
\]
or 
\[
\omega_{f_{n,\beta},f_{j}}=\omega_{f_{n,\beta+2},f_{j}}-\omega_{f_{n-1,\beta+1},f_{j}}+\omega_{f_{n-1,\beta},f_{j}}.
\]
 So 
\[
\omega_{f_{n,\beta},f_{j}}-\omega_{f_{n,\beta+1},f_{j}}=\omega_{f_{n-2,\beta},f_{j}}+\omega_{f_{n,-1\beta+2},f_{j}}-\omega_{f_{n-1,\beta+1},f_{j}}-\omega_{f_{n-2,\beta+1},f_{j}},
\]
 and using \eqref{eq:ExReatib+1} recursively 
\begin{equation}
\omega_{f_{n,\beta},f_{j}}-\omega_{f_{n,\beta+1},f_{j}}=\omega_{f_{i-2,\beta},f_{j}}+\omega_{f_{i,-1\beta+2},f_{j}}-\omega_{f_{i-1,\beta+1},f_{j}}-\omega_{f_{i-2,\beta+1},f_{j}}.\label{eq:recExrltb+1}
\end{equation}
 Now we only need $\omega_{f_{n,\alpha+1},f_{j}}-\omega_{f_{n,\alpha},f_{j}}=\omega_{f_{n,\beta+1},f_{j}}-\omega_{f_{n,\beta},f_{j}}$.
This is true from Lemma \ref{lem:SumCfeq0} and the assumption $j\neq\left(i,\beta+1\right)$,
so \eqref{eq:CfSumat1b+1} turns to 
\[
\sum_{p\leftarrow k}\overline{\omega}_{kq}-\sum_{p\rightarrow k}\overline{\omega}_{kq}=0,\quad\forall q\neq\left(p,\beta+1\right).
\]
 If, on the other hand $q=\left(1,\beta+1\right)$, this still holds,
but Lemma \ref{lem:SumCfeq0} now says $\omega_{f_{n,\alpha+1},f_{j}}-\omega_{f_{n,\alpha},f_{j}}=\omega_{f_{n,\beta+1},f_{j}}-\omega_{f_{n,\beta},f_{j}}+1$,
so that 
\[
\sum_{p\leftarrow k}\overline{\omega}_{kq}-\sum_{p\rightarrow k}\overline{\omega}_{kq}=1.
\]
\\
Last, let $q=\left(p,\beta+1\right)$ with $p>1$. So in \eqref{eq:ExReatib+1}
we need to add $1$ to the right hand side. This $1$ is then added
to the sum of coefficients over neighbors of $\left(1,\beta+1\right)$,
but now Lemma \ref{lem:SumCfeq0} says $\omega_{f_{n,\alpha+1},f_{j}}-\omega_{f_{n,\alpha},f_{j}}=\omega_{f_{n,\beta+1},f_{j}}-\omega_{f_{n,\beta},f_{j}}+1$
, so again 
\[
\sum_{p\leftarrow k}\overline{\omega}_{kq}-\sum_{p\rightarrow k}\overline{\omega}_{kq}=0.
\]
 The special case $\beta=n-1$ is somewhat different, because here
vertices $\left(i,\beta+1\right)=\left(i,n\right)$ do not have neighbors
on the right. However, the same arguments still hold, and since the
exchange relations in the standard quiver are similar, the final conclusion
is identical.

(b) Let $p=\left(n,\alpha\right)$ then in the standard quiver its
neighbors were $f_{n,\alpha+1},f_{n-1,\alpha-1}$ (with arrows from
$p$ to them), and $f_{n,\alpha-1},f_{n-1,\alpha}$ (with arrows pointing
to $p$). In $Q_{\alpha\beta}^{n}$ an arrow is added from $\left(1,\beta+1\right)$
to $p$. We then have $\varphi_{n-1,\alpha-1}=f_{n-1,\alpha-1}\cdot f_{1,\beta+1}-f_{n-1,\alpha-1}^{\rightarrow}\cdot f_{1,\beta+1}^{\leftarrow}$.
So using $\overline{\omega}_{\varphi_{n-1,\alpha-1},g}=\omega_{f_{n-1,\alpha-1},g}+\omega_{f_{1,\beta+1},g}$
we get 
\begin{eqnarray*}
\sum_{p\leftarrow k}\overline{\omega}_{kq}-\sum_{p\rightarrow k}\overline{\omega}_{kq} & = & \overline{\omega}_{\varphi_{n,\alpha-1},\varphi_{q}}+\overline{\omega}_{\varphi_{n-1,\alpha},\varphi_{q}}+\overline{\omega}_{\varphi_{1,\beta+1},\varphi_{q}}\\
 &  & -\overline{\omega}_{\varphi_{n,\alpha+1},\varphi_{q}}-\overline{\omega}_{\varphi_{n-1,\alpha-1},\varphi_{q}}\\
 & = & \omega_{f_{n,\alpha-1},f_{q}}+\omega_{f_{n-1,\alpha},f_{q}}+\omega_{f_{1,\beta+1},f_{q}}\\
 &  & -\omega_{f_{n,\alpha+1},f_{q}}-\omega_{f_{n-1,\alpha-1},f_{q}}-\omega_{f_{1,\beta+1},f_{q}}\\
 & = & \sum_{p\leftarrow k}\omega_{k,f_{q}}-\sum_{p\rightarrow k}\omega_{k,f_{q}},
\end{eqnarray*}
and the last term is the one from the standard case, which equals
$\delta_{ij}$. 

(c) $p=\left(n,\alpha+1\right)$. In the standard quiver there are
arrows from $p$ to $\left(n,\alpha\right),\left(n-1,\alpha+1\right)$
and from $\left(n,\alpha+2\right),\left(n-1,\alpha\right)$ to $p$.
In $Q_{\alpha\beta}^{n}$ there is a new arrow $\left(n,\alpha+1\right)\to\left(1,\beta+1\right)$.
Again, 
\begin{eqnarray*}
\sum_{p\leftarrow k}\overline{\omega}_{kq}-\sum_{p\rightarrow k}\overline{\omega}_{kq} & = & \overline{\omega}_{f_{n,\alpha+2}}+\overline{\omega}_{f_{n-1,\alpha}}+\overline{\omega}_{f_{1,\beta+1}}\\
 &  & -\overline{\omega}_{\varphi_{n\alpha}}-\overline{\omega}_{f_{n-1,\alpha+1}}\\
 & = & \omega_{f_{n,\alpha+2}}+\omega_{f_{n-1,\alpha}}+\omega_{f_{1,\beta+1}}\\
 &  & -\omega_{f_{n\alpha}}-\omega_{f_{1,\beta+1}}-\omega_{f_{n-1,\alpha+1}}\\
 & = & \sum_{p\leftarrow k}\omega_{k,f_{n,\alpha+1}}-\sum_{p\rightarrow k}\omega_{k,f_{n,\alpha+1}}
\end{eqnarray*}
 which is also equal to the standard. 
\end{proof}
Note that an immediate corollary from Theorem \ref{thm:Compatible}
is that the exchange matrix $\overline{B}$ is of maximal rank, since
$\rank\left(\bar{B}\bar{\Omega}\right)\leq\min\left(\rank\overline{B},\rank\overline{\Omega}\right)$,
and \eqref{eq:Compatibility} implies that $\bar{B}\bar{\Omega}$
has maximal rank.

\section{Local regularity}

\label{sec:Regularity}

We now show that the initial seed $\mathcal{B}_{\alpha\beta}$ defined
in Section \ref{sub:Constructing-a-log} is locally regular. Clearly,
all the elements of $\mathcal{B}_{\alpha\beta}$ are regular functions,
so local regularity is equivalent to:
\begin{thm}
For every exchangeable variable $\varphi$ in the initial cluster,
the exchanged variable $\varphi'$ is a regular function.\end{thm}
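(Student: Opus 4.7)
The exchange relation \eqref{eq:exrltn} applied at a mutable vertex $(i,j)$ of $Q_{\alpha\beta}$ expresses $\varphi_{ij}\cdot\varphi'_{ij}$ as a sum of two monomials in the remaining extended cluster variables $\varphi_{kl}$, all of which are regular on $SL_n$ by construction. Hence the theorem reduces to showing that $\varphi_{ij}$ divides this sum in $\mathcal{O}(SL_n)$. I would split the mutable vertices into three groups and verify the divisibility in each group via an appropriate determinantal identity.

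\emph{Interior standard vertices.} For $(i,j)$ such that $\varphi_{ij}=f_{ij}$ and every neighbor of $(i,j)$ in $Q_{\alpha\beta}$ also carries a standard cluster variable $f_{kl}$, both the quiver and the exchange relation coincide verbatim with the standard ones. The classical short Pl\"ucker (Desnanot--Jacobi) identity for the contiguous submatrix $M_{ij}(X)$ then yields a regular $\varphi'_{ij}=f'_{ij}$ which is itself a product of trailing minors. This case is essentially already treated in \cite{BFZ,gekhtman2012cluster}.

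\emph{The non-standard variables $\theta_k$ and $\psi_m$.} For $(i,j)=(n+k-\alpha,k)$ with $1<k<\alpha$, the cluster variable $\varphi_{ij}=\theta_k$ is, up to a frozen factor, the determinant of the two-block matrix $\tilde{M}_{ij}(X)$ of Section~\ref{sub:Constructing-a-log}. I would apply the Desnanot--Jacobi identity directly to $\tilde{M}_{ij}(X)$: deleting the north-west and south-east rows and columns of the outer block produces exactly the neighboring $\theta_{k\pm 1}$ paired with standard minors, which matches the right-hand side of the exchange relation dictated by $Q_{\alpha\beta}$. The $\psi_m$ case then follows by the $\alpha\leftrightarrow\beta$ symmetry recalled at the start of Section~\ref{sec:A-log-canonical-basis}.

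\emph{The newly mutable vertices and their modified neighbors.} The hard case is the vertices $(\alpha+1,1)$ and $(1,\beta+1)$, together with $(n,\alpha)$, $(n,\alpha+1)$, $(\beta,n)$, $(\beta+1,n)$, where $Q_{\alpha\beta}$ genuinely departs from $Q_{std}^n$ through the extra arrows listed in Section~\ref{sub:The-quiver}. Here I would expand the $\theta_k$'s and $\psi_m$'s appearing on the right-hand side via their defining identities and regroup the resulting monomials by common standard-minor factors; each group should collapse to a classical Desnanot--Jacobi identity on an explicit contiguous submatrix of $X$, producing a polynomial formula for $\varphi'_{ij}$. I expect $(\alpha+1,1)$ and $(1,\beta+1)$ to be the main obstacle, since they are frozen in $\mathcal{C}_{std}$ and so carry no standard exchange relation against which to compare. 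For these two vertices, Theorem~\ref{thm:Compatible} combined with Proposition~\ref{prop:PoissCompStruc} provides a strong internal check: $\varphi'_{ij}$ must be log canonical with every $\varphi_{kl}$ in the new cluster, which severely constrains the shape of the answer and reduces the remaining verification to matching a small finite set of coefficients.
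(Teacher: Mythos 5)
Your global strategy---a case-by-case analysis over the mutable vertices, with Desnanot--Jacobi identities supplying the divisibility of the exchange binomial by $\varphi_{ij}$---is the same as the paper's, and your ``regroup by common standard-minor factors'' idea is exactly how the paper treats the vertices whose neighbors include two non-standard variables. But two of your steps do not go through as stated. For the exchange at $\theta_{k}=\varphi_{n+k-\alpha,k}$ ($1<k<\alpha$) you propose a single Desnanot--Jacobi identity applied directly to $\tilde{M}_{ij}(X)$, claiming that boundary deletions produce the neighbors $\theta_{k\pm1}$. This is false for $\theta_{k-1}$: its defining matrix $\tilde{M}_{i-1,j-1}$ contains an \emph{additional} row and column of $X$ and is strictly larger than $\tilde{M}_{ij}$, so it is not the determinant of any submatrix of $\tilde{M}_{ij}$; deleting the south-east row and column of $\tilde{M}_{ij}$ instead truncates the $Y$-block and yields a function that is not in the cluster at all. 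Moreover, the exchange monomials at this vertex are products of \emph{three} cluster variables, whereas \eqref{eq:DesJacId} and \eqref{eq:DesJacIdM} produce products of two minors, so even on a suitably enlarged matrix one must still factor the mixed minors. The paper avoids all of this by expanding $\theta_{k\pm1}=f_{i\mp1,j\mp1}f_{1,\beta+1}-f_{i\mp1,j\mp1}^{\rightarrow}f_{1,\beta+1}^{\leftarrow}$, grouping the exchange sum by the common factors $f_{1,\beta+1}$ and $f_{1,\beta+1}^{\leftarrow}$, and noting that the two groups are the standard identity $e_{f_{ij}}=f_{ij}\cdot g$ and its column-shifted version $\overline{e}_{f_{ij}}=f_{ij}^{\rightarrow}\cdot g$ with the \emph{same} cofactor $g$, so the sum collapses to $\varphi_{ij}\cdot g$; some version of this bookkeeping is unavoidable in your route as well.

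The more serious gap is at the vertices $(1,\beta+1)$ and $(\alpha+1,1)$, exactly the ones you flag as the main obstacle. Compatibility (Theorem \ref{thm:Compatible} together with Proposition \ref{prop:PoissCompStruc}) only constrains the Poisson coefficients of $\varphi'$ with the other cluster variables; these constraints hold for the rational function $\varphi'$ whatever it is and cannot detect whether it is a polynomial, so ``matching a small finite set of coefficients'' is a consistency check, not an argument for regularity. The paper closes this case with one more explicit determinantal identity: border the submatrix $X_{\left[1,n-\beta+1\right]}^{\left[\beta,n\right]}$ from above by the extra row $\left(x_{n\alpha},x_{n,\alpha+1},0,\ldots,0\right)$ to form a square matrix $A$; the exchange binomial at $(1,\beta+1)$ is then literally $\det A\cdot\det A_{\hat{1}\hat{2}}^{\hat{1}\hat{n}}+\det A_{\hat{2}}^{\hat{1}}\cdot\det A_{\hat{1}}^{\hat{n}}$, which by \eqref{eq:DesJacId} equals $\det A_{\hat{1}}^{\hat{1}}\cdot\det A_{\hat{2}}^{\hat{n}}=\varphi_{1,\beta+1}\cdot\det A_{\hat{2}}^{\hat{n}}$, so $\varphi'=\det A_{\hat{2}}^{\hat{n}}$ is regular (and symmetrically for $(\alpha+1,1)$). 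Without an identity of this explicit kind your treatment of the newly mutable vertices, and hence the theorem, remains unproved.
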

\begin{proof}
We can use the similarity of the exchange quivers $Q_{\alpha\beta}^{n}$
and $Q_{std}^{n}$ . The exchange relation \eqref{eq:exrltn} involves
the cluster variable $\varphi$ and its neighbors in the exchange
quiver. 

Consider the following cases:

1. $\varphi$ is in $\mathcal{B}_{\alpha\beta}\cap\mathcal{B}_{std}$
and all its neighbors are also in $\mathcal{B}_{\alpha\beta}\cap\mathcal{B}_{std}$.
\\
This means the exchange rule is the same as in the standard case,
and therefore the exchanged cluster variable is equal to the one in
the standard case, which is regular.

2. $\varphi$ is in $\mathcal{B}_{\alpha\beta}\cap\mathcal{B}_{std}$,
but at least one of its neighbors is not in 
$\mathcal{B}_{\alpha\beta}\cap\mathcal{B}_{std}$.

(a) Two neighbors of $\varphi$ are not in 
$\mathcal{B}_{\alpha\beta}\cap\mathcal{B}_{std}$.\label{enu:Two-neighbors-nin}\\
The exchange rule is now $\varphi\cdot\varphi'=\varphi_{ij}\cdot p_{1}+\varphi_{i+1,j+1}\cdot p_{2}$
where $\varphi_{ij}$ and $\varphi_{i+1,j+1}$ are the two non standard
neighbors and $p_{1},p_{2}$ some monomials. Now recall that in this
case $\varphi_{ij}=f_{ij}h-\overline{f}_{ij}\overline{h}$ where $\overline{f}_{ij}$
is either $f_{ij}^{\rightarrow}$ or $f_{ij}^{\downarrow}$ , and
$\overline{h}$ is either $f_{1,\beta+1}^{\leftarrow}$ or $f_{\alpha+1,1}^{\uparrow}$,
respectively. The exchange rule is then 
\begin{eqnarray*}
\varphi\cdot\varphi' & = & \left(f_{ij}h-\overline{f}_{ij}\overline{h}\right)p_{1}+\left(f_{i+1,j+1}h-\overline{f}_{i+1,j+1}\overline{h}\right)p_{2}\\
 & = & h\left(f_{ij}p_{1}+f_{i+1,j+1}p_{2}\right)-\overline{h}\left(\overline{f}_{ij}p_{1}+\overline{f}_{i+1,j+1}p_{2}\right)
\end{eqnarray*}
 the first part is just the standard exchange rule multiplied by $h$.
The term in the second parenthesis can be regarded as a Desnanot--Jacobi
identity \eqref{eq:DesJacId}. It is equal to the standard one with
just one change: the last column (row) that was $\alpha$ ($\beta$)
in the standard case is now replaced by $\alpha+1$ ($\beta+1$).
It is not hard to conclude that the result is a product of $\varphi$
and some regular function, as it was in the standard case.

(b) Only one neighbor of $\varphi$ is not in $\mathcal{B}_{\alpha\beta}\cap\mathcal{B}_{std}$
. \label{enu:One-neighbor-nin}\\
There are only two such vertices: $\varphi_{n,\alpha+1}$ and $\varphi_{\beta+1,n}$
. The vertex $\varphi_{n,\alpha+1}=x_{n,\alpha+1}$ has neighbors
$\varphi_{n,\alpha},\varphi_{n-1,\alpha},\varphi_{n-1,\alpha+1},\varphi_{n,\alpha+2}$
and $\varphi_{1,\beta+1}$. Figure \ref{fig:localfna+1} shows the
relevant subquiver of $Q_{\alpha\beta}^{n}$. Recall that $\varphi_{n,\alpha}=x_{n,\alpha}f_{1,\beta+1}-x_{n,\alpha+1}f_{1,\beta+1}^{\leftarrow}$
, so 
\begin{eqnarray*}
\varphi_{n,\alpha+1}\cdot\varphi'_{n,\alpha+1} & = & \varphi_{n,\alpha}\varphi_{n-1,\alpha+1}+\varphi_{n-1,\alpha}\varphi_{n,\alpha+2}\varphi_{1,\beta+1}\\
 & = & f_{1,\beta+1}\left(x_{n,\alpha}f_{n-1,\alpha+1}+f_{n-1,\alpha}f_{n,\alpha+2}\right)-f_{n,\alpha+1}f_{1,\beta+1}^{\beta+1\leftarrow\beta}f_{n-1,\alpha+1}.
\end{eqnarray*}
 
\begin{figure}
\begin{centering}
\includegraphics[scale=0.3]{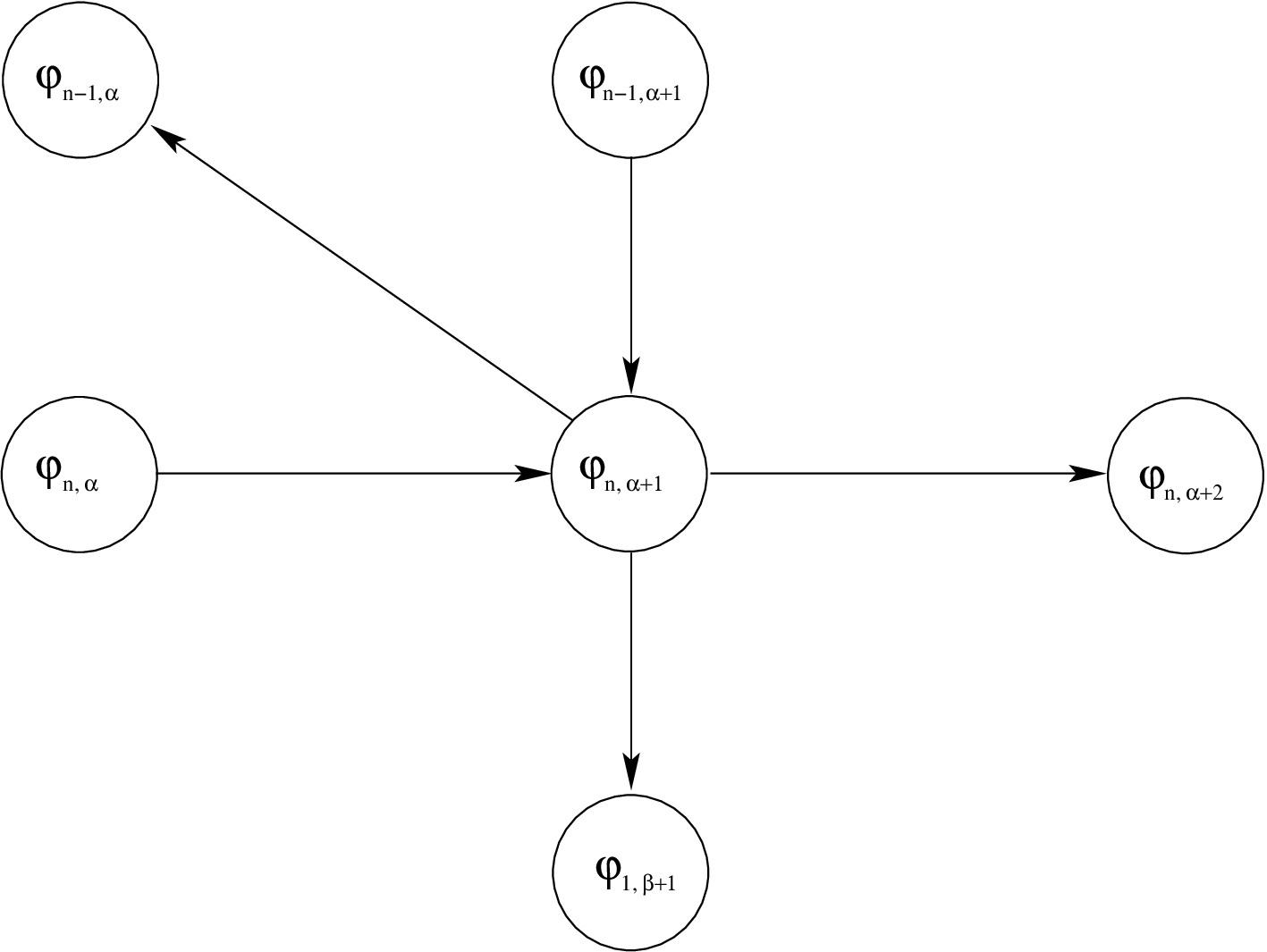} 
\par\end{centering}

\caption{The neighbors of $f_{n,\alpha+1}$}

\centering{}\label{fig:localfna+1} 
\end{figure}

The term in parenthesis is the exchange rule in the standard case,
so it is the product of $f_{n,\alpha+1}$ and some other regular function,
and the second term is clearly divisible by $f_{n,\alpha+1}$. Therefore
the exchanged variable is regular. Similar arguments hold for the
vertex $\varphi_{\beta+1,n}$.

3. $\varphi$ is not in $\mathcal{B}_{\alpha\beta}\cap\mathcal{B}_{std}$
.

(a) $\varphi$ is either $\varphi_{n\alpha}$ or $\varphi_{\beta n}$.\\
Assume $\varphi=\varphi_{n,\alpha}=x_{n,\alpha}f_{1,\beta+1}-x_{n,\alpha+1}f_{1,\beta+1}^{\leftarrow}$
. Assume $\alpha>1$ because if $\alpha=1$, the variable $\varphi_{n1}$
must be frozen. The adjacent vertices correspond to $\varphi_{n,\alpha-1},\varphi_{n-1,\alpha},\varphi_{n,\alpha+1},\varphi_{n-1,\alpha-1},\varphi_{1,\beta+1}$
where $\varphi_{n-1,\alpha-1}=f_{n-1,\alpha-1}f_{1,\beta+1}-f_{n-1,\alpha-1}^{\rightarrow}f_{1,\beta+1}^{\leftarrow}$
, as shown in Figure \ref{fig:localna}. The exchange rule is 
\begin{eqnarray*}
\varphi_{n,\alpha}\cdot\varphi_{n,\alpha}^{\prime} & = & \varphi_{n,\alpha+1}\varphi_{n-1,\alpha-1}+\varphi_{n,\alpha-1}\varphi_{n-1,\alpha}\varphi_{1,\beta+1}\\
 & = & x_{n,\alpha+1}\left(f_{n-1,\alpha-1}f_{1,\beta+1}-f_{n-1,\alpha-1}^{\rightarrow}f_{1,\beta+1}^{\leftarrow}\right)+x_{n,\alpha-1}f_{n-1,\alpha}f_{1,\beta+1}\\
 & = & f_{1,\beta+1}\left(x_{n,\alpha+1}f_{n-1,\alpha-1}+x_{n,\alpha-1}f_{n-1,\alpha}\right)-x_{n,\alpha+1}f_{n-1,\alpha-1}^{\rightarrow}f_{1,\beta+1}^{\leftarrow}
\end{eqnarray*}
 and the term in parenthesis is just the standard exchange rule, which
is $x_{n\alpha}\cdot f_{n-1,\alpha-1}^{\rightarrow}$. Therefore,
\[
\varphi_{n,\alpha}\cdot\varphi_{n,\alpha}^{\prime}=\left(x_{n\alpha}f_{1,\beta+1}-x_{n,\alpha+1}f_{1,\beta+1}^{\leftarrow}\right)f_{n-1,\alpha-1}^{\rightarrow}=\varphi_{n,\alpha}f_{n-1,\alpha-1}^{\rightarrow},
\]
and $\varphi_{n\alpha}^{\prime}=f_{n-1,\alpha-1}^{\rightarrow}$ is
a regular function. \\
Symmetric arguments show that $\varphi_{\beta n}^{\prime}$ is also
regular. 
\begin{figure}
\begin{centering}
\includegraphics[scale=0.3]{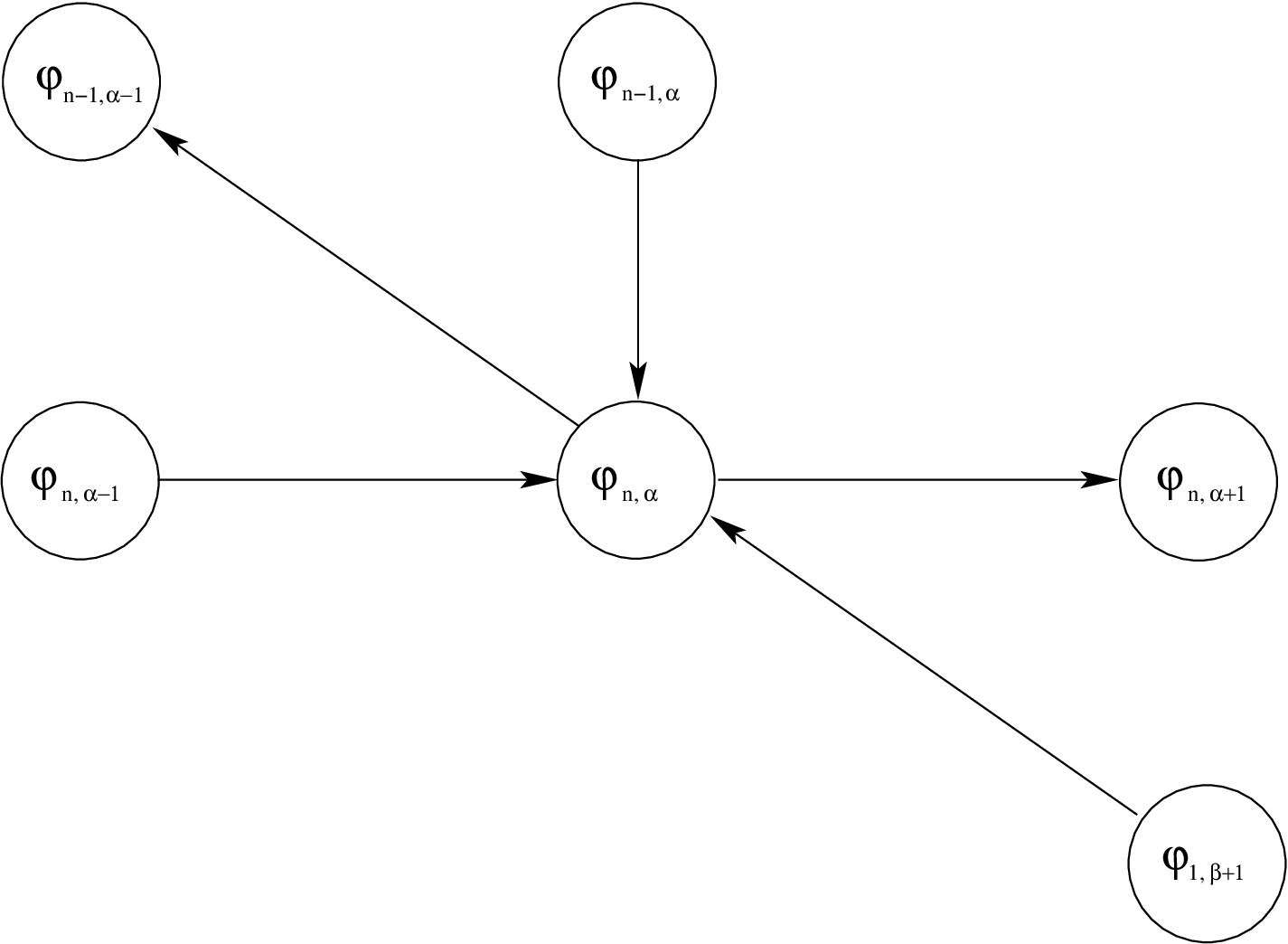} 
\par\end{centering}

\caption{The neighbors of $\varphi_{n\alpha}$}

\centering{}\label{fig:localna} 
\end{figure}

(b) $\varphi$ has two non standard neighbors. \\
This happens when $\varphi=\varphi_{ij}$, and the two non standard
neighbors are 
\begin{eqnarray*}
\varphi_{i-1,j-1} & = & f_{i-1,j-1}f_{1,\beta+1}-\overline{f}_{i-1,j-1}\overline{f}_{1,\beta+1}\\
\varphi_{i+1,j+1} & = & f_{i+1,j+1}f_{1,\beta+1}-\overline{f}_{i+1,j+1}\overline{f}_{1,\beta+1}.
\end{eqnarray*}
The other neighbors are the same neighbors from the standard case.
Denote the corresponding standard exchange rule at $f_{ij}$ by $e_{f_{ij}}.$
This is a Desnanot-Jacobi identity \eqref{eq:DesJacId} or the modified
version of it \eqref{eq:DesJacIdM}. Let $\overline{e}_{f_{ij}}$
be the same identity with column $\alpha$ (or row $\beta$ ) replaced
by column $\alpha+1$ (or row $\beta+1$, respectively). In other
words if $e_{f_{ij}}=f_{ij}\cdot g$ then $\overline{e}_{f_{ij}}=\overline{f}_{ij}\cdot g$,
and the exchange rule is 
\begin{eqnarray*}
\varphi_{ij}\cdot\varphi_{ij}^{\prime} & = & f_{1,\beta+1}e_{f_{ij}}-\overline{f}_{1,\beta+1}\overline{e}_{f_{ij}}\\
 & = & \left(f_{1,\beta+1}f_{ij}-\overline{f}_{1,\beta+1}\overline{f}_{ij}\right)g=\varphi_{ij}\cdot g
\end{eqnarray*}
and $g$ is the same regular function as in the standard case. 

4. $\varphi$ is $f_{1,\beta+1}$ or $f_{\alpha+1,1}$ (which were
frozen in the standard case).\\
Assume $\varphi=f_{1,\beta+1}$ with neighbors $\varphi_{n,\alpha},\varphi_{n,\alpha+1},\varphi_{1,\beta},\varphi_{2,\beta+1},\varphi_{2,\beta+2}$
(Figure \ref{fig:localf1b+1}). The exchange rule is then 
\[
\varphi\cdot\varphi^{\prime}=\varphi_{n\alpha}\varphi_{2,\beta+1}+\varphi_{n,\alpha+1}\varphi_{1,\beta}\varphi_{2,\beta+2}.
\]
 
\begin{figure}
\begin{centering}
\includegraphics[scale=0.3]{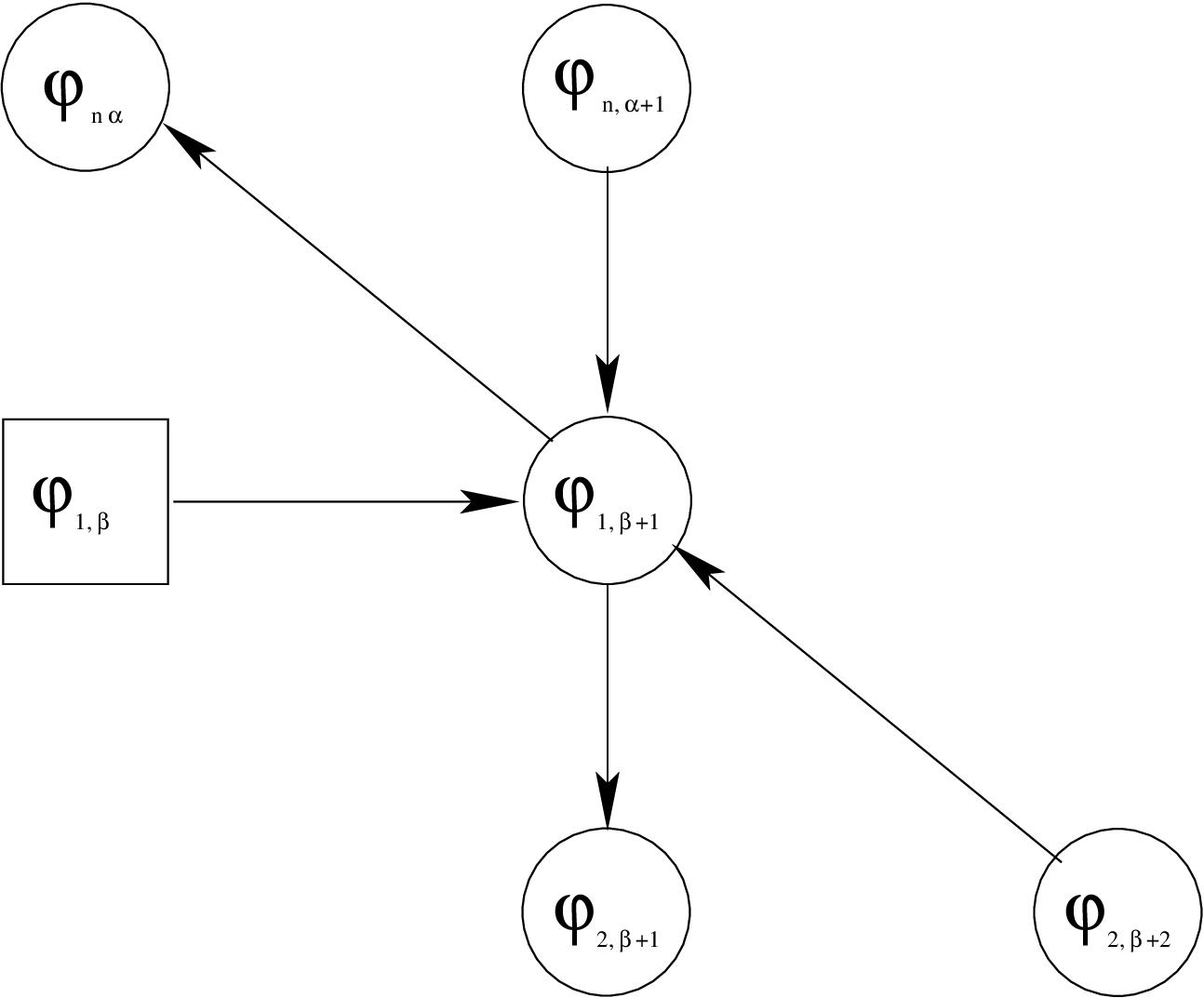} 
\par\end{centering}

\caption{The neighbors of $\varphi_{1,\beta+1}$}

\centering{}\label{fig:localf1b+1} 
\end{figure}

If we put 
\[
A=\left[\begin{array}{ccccc}
x_{n\alpha} & x_{n,\alpha+1} & 0 & \cdots & 0\\
x_{1,\beta} & x_{1,\beta+1} & \cdots & \cdots & x_{1n}\\
\vdots & x_{2,\beta+1} & x_{2,\beta+2} &  & \vdots\\
\vdots &  &  & \ddots\\
\\
\end{array}\right],
\]
 then the exchange rule reads 
\[
\varphi\cdot\varphi^{\prime}=\det A\cdot\det A_{\hat{1}\hat{2}}^{\hat{1}\hat{n}}+\det A_{\hat{2}}^{\hat{1}}\cdot\det A_{\hat{1}}^{\hat{n}}
\]
 and according to \eqref{eq:DesJacId} $\varphi\cdot\varphi^{\prime}=\det A_{\hat{1}}^{\hat{1}}\det A_{\hat{2}}^{\hat{n}}.$
Since $\varphi=\det A_{\hat{1}}^{\hat{1}},$ we get $\varphi^{\prime}=\det A_{\hat{2}}^{\hat{n}}$
, which is regular. The case of $\varphi=\varphi_{\alpha+1,1}$ is
symmetric.

This competes the proof, since in all cases the exchanged variable
$\varphi^{\prime}$ is a regular function.
\end{proof}

\section{Technical results and computations\label{sec:Technical}}

We present here the proofs to some technical results that were used
in previous sections. The bracket $\left\{ \cdot,\cdot\right\} $
will be computed through the operator $R_{+}$. Lemma \ref{lem:TheOperatorRpls}
explains this operator, while the rest of this section gives more
information about the standard bracket and the difference between
the $\alpha\beta$ bracket and the standard bracket.

\subsection{The operator $R_{+}$ \label{sub:operatorRpls}}

Following Lemma 4.1 in \cite{gekhtman2013arxiv}, we compute the Sklyanin
bracket $\left\{ f,g\right\} $ associated with an R-matrix $r$ through
\begin{equation}
\left\{ f,g\right\} (X)=\left\langle R_{+}\left(\nabla f(X)X\right),\nabla g(X)X\right\rangle -\left\langle R_{+}\left(X\nabla f(X)\right),X\nabla g(X)\right\rangle ,\label{eq:BrcktthTrc}
\end{equation}
 where $\left\langle X,Y\right\rangle =\Tr\left(XY\right)$, $\nabla$
is the gradient with respect to the trace-form, and $R_{+}\in\End\mathfrak{gl}_{n}$
as defined in \eqref{eq:RplsDef}. For the computations it will be
convenient to describe $R_{+}$ in a different way: for an element
$\eta\in\mathfrak{gl}_{n}$, let $\eta_{>0}$ and $\eta_{0}$ be the
projections of $\eta$ onto the subalgebra spanned by positive roots,
and the Cartan subalgebra $\mathfrak{h}$, respectively. Let $h_{i}=e_{ii}-e_{i+1,i+1}$
be a basis for $\mathfrak{h}$. The dual basis (defined by $\left\langle \hat{h}_{i},h_{j}\right\rangle =\delta_{ij}$)
is then 
\[
\hat{h}_{i}=\frac{1}{n}\diag\left(\underbrace{\left(n-i\right),\ldots,\left(n-i\right)}_{i},\underbrace{-i,\ldots,-i}_{\left(n-i\right)}\right).
\]
Defining 
\begin{equation}
s_{k}\left(j\right)=\begin{cases}
n-j & j\geq k\\
-j & j<k
\end{cases}\label{eq:sfuncdef}
\end{equation}
 we can write $\left(\hat{h}_{i}\right)_{kk}=\frac{1}{n}s_{k}\left(i\right).$
Next, define the operator $R_{\diag}$ on $\mathfrak{h}$ by 
\begin{eqnarray}
R_{\diag}\left(e_{kk}\right) & = & \sum_{j=1}^{n-1}s_{k}\left(j\right)\left(\hat{h}_{j}-\hat{h}_{j-1}\right)+\left(s_{k}\left(\beta-1\right)-s_{k}\left(\beta\right)\right)\hat{h}_{\alpha}\nonumber \\
 &  & +\left(s_{k}\left(\alpha\right)-s_{k}\left(\alpha+1\right)\right)\hat{h}_{\beta}+s_{k}\left(\beta\right)\hat{h}_{\alpha+1}-s_{k}\left(\alpha\right)\hat{h}_{\beta-1},\label{eq:Rplsonekk}
\end{eqnarray}

Last, for the Belavin--Drinfeld data $\left\{ \alpha\right\} \mapsto\left\{ \beta\right\} $
define 
\[
R_{BD}\left(\eta\right)=\eta_{\alpha,\alpha+1}e_{\beta,\beta+1}-\eta_{\beta+1,\beta}e_{\alpha+1,\alpha}.
\]

\begin{lem}
\label{lem:TheOperatorRpls}The operator $R_{+}$ acts on $\eta\in\mathfrak{gl}_{n}$
by 
\begin{equation}
R_{+}\left(\eta\right)=\eta_{>0}+R_{\diag}\left(\eta\right)+R_{BD}\left(\eta\right).\label{eq:RplsSmplForm}
\end{equation}
 \end{lem}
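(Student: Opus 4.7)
The plan is to exploit linearity of $r \mapsto R_+$ coming from \eqref{eq:RplsDef} and decompose the R-matrix \eqref{eq:RmtxCons} as $r = r_0 + r_{\mathrm{nil}} + r_{BD}$, where $r_{\mathrm{nil}} = \sum_{\alpha\in \Phi^+} E_{-\alpha}\otimes E_\alpha$ is the standard nilpotent sum and $r_{BD} = E_{-\alpha_\alpha} \wedge E_{\alpha_\beta}$ is the single BD twist produced by the unique relation $\alpha \prec \beta$. The three summands of \eqref{eq:RplsSmplForm} will then arise by showing that $r_{\mathrm{nil}}$ produces $\eta \mapsto \eta_{>0}$, that $r_{BD}$ produces $R_{BD}$, and that $r_0$ produces $R_{\diag}$.

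For $r_{\mathrm{nil}}$, identifying the positive root vectors in $\mathfrak{gl}_n$ as $E_{\alpha_{ij}}=e_{ij}$ and $E_{-\alpha_{ij}} = e_{ji}$ for $i<j$, the trace pairing directly gives
\[
\langle r_{\mathrm{nil}}, \eta\otimes\zeta\rangle_\otimes = \sum_{i<j} \eta_{ij}\zeta_{ji} = \langle \eta_{>0}, \zeta\rangle,
\]
so this summand contributes $\eta_{>0}$. For $r_{BD} = e_{\alpha+1,\alpha}\otimes e_{\beta,\beta+1} - e_{\beta,\beta+1}\otimes e_{\alpha+1,\alpha}$, the same pairing yields
\[
\langle r_{BD},\eta\otimes\zeta\rangle_\otimes = \eta_{\alpha,\alpha+1}\zeta_{\beta+1,\beta} - \eta_{\beta+1,\beta}\zeta_{\alpha,\alpha+1},
\]
and reading off the dual vector in $\mathfrak{gl}_n$ reproduces $R_{BD}(\eta)$ as defined.

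The Cartan piece $r_0 \in \mathfrak{h}\otimes\mathfrak{h}$ is the main obstacle. Since the induced operator preserves $\mathfrak{h}$ and vanishes on root spaces, it suffices to evaluate $R_+(e_{kk})$ for each $k$. I would do this by writing $r_0$ in the basis $\{e_{ii}\otimes e_{jj}\}$, reading off the image of $e_{kk}$ via the trace pairing, and re-expressing the answer in the dual basis $\{\hat h_j\}$ of $\{h_j\}$. The key inputs are \eqref{eq:r0Cond1} and \eqref{eq:r0Cond2}: the normalization $r_0+r_0^{21}=\mathfrak{t}_0$ accounts for the generic telescoping contribution $\sum_j s_k(j)(\hat h_j - \hat h_{j-1})$, which one reads off from the identity $(\hat h_i)_{kk} = s_k(i)/n$, while the BD constraint $(\beta\otimes \mathrm{Id})r_0 + (\mathrm{Id}\otimes\alpha)r_0 = 0$ identifies certain coefficients of $r_0$ between the $\alpha$- and $\beta$-rows/columns, forcing the four correction terms at $\hat h_\alpha, \hat h_\beta, \hat h_{\alpha+1}, \hat h_{\beta-1}$.

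The delicate point is the bookkeeping at exactly these four indices: tracking how the BD constraint shifts the coefficients at $\alpha\leftrightarrow\beta$ and at $\alpha+1\leftrightarrow\beta-1$ is what produces the asymmetry of signs in \eqref{eq:Rplsonekk}. Once the Cartan calculation is completed, summing the three contributions yields \eqref{eq:RplsSmplForm}.
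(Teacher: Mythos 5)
Your treatment of the nilpotent and Belavin--Drinfeld pieces is fine: splitting $r=r_{0}+\sum_{\delta\in\Phi^{+}}E_{-\delta}\otimes E_{\delta}+E_{-\alpha}\wedge E_{\beta}$ and pushing each summand through the pairing \eqref{eq:RplsDef} is just a repackaging of the paper's verification on the root vectors $E_{\pm\delta}$, and your trace computations correctly reproduce $\eta_{>0}$ and $R_{BD}(\eta)$.

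The gap is in the Cartan part, which you yourself flag as the delicate point. Your plan is to derive the formula \eqref{eq:Rplsonekk} for $R_{\diag}$ from the constraints \eqref{eq:r0Cond1} and \eqref{eq:r0Cond2} alone, claiming the normalization $r_{0}+r_{0}^{21}=\mathfrak{t}_{0}$ produces the telescoping sum and the BD relation forces the four correction terms at $\hat{h}_{\alpha},\hat{h}_{\beta},\hat{h}_{\alpha+1},\hat{h}_{\beta-1}$. This cannot work: those two conditions do not determine $r_{0}$ --- by part (iii) of the Belavin--Drinfeld proposition quoted in the paper, the admissible $r_{0}$ form a space of dimension $k_{T}=\left|\Delta\setminus\Gamma_{1}\right|=n-2>0$ (only the symmetric part of $r_{0}$ is pinned down by \eqref{eq:r0Cond2}; a nontrivial family of skew parts remains even after imposing \eqref{eq:r0Cond1}). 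Different admissible choices of $r_{0}$ give genuinely different operators $R_{\diag}$, so neither the telescoping sum nor the four correction terms are ``forced''; they encode one particular choice. The lemma is really a statement about a specific $r_{0}$, and its proof must first exhibit that choice and check admissibility --- this is exactly what the paper does, writing $r_{0}=\sum a_{ij}\hat{h}_{i}\otimes\hat{h}_{j}$ with coefficient matrix $A$ (when $\beta=\alpha+1$) or $A+B$ (otherwise), verifying conditions \eqref{eq:r0CfMatcnd1}--\eqref{eq:r0CfMatcnd2}, and arriving at the explicit expressions \eqref{eq:r0def}, \eqref{eq:r0defbneqa+1}; only then does it compute $R_{+}(h_{k})=\hat{h}_{k+1}-\hat{h}_{k-1}$ and convert to the $e_{kk}$ basis to obtain \eqref{eq:Rplsonekk}. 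To repair your argument you must insert this construction step (or otherwise fix a concrete $r_{0}$ consistent with the one used in Section \ref{sub:BisLC}) before any computation of the diagonal contribution; as written, the central claim of the lemma is assumed rather than proved.
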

\begin{proof}
Recall the construction of the R-matrix $r^{\alpha\beta}$ according
to \eqref{eq:RmtxCons}: there is some freedom in choosing the diagonal
part $r_{0}$. Following \cite[Ch. 3]{chriprsly}, $r_{0}=\sum_{i,j}a_{ij}\hat{h}_{\alpha_{i}}\otimes\hat{h}_{\alpha_{j}}$
is determined by the coefficient matrix $\left(a_{ij}\right)$, which
is subject to the conditions 
\begin{eqnarray}
a_{ij}+a_{ji}=\left(\alpha_{i},\alpha_{j}\right) &  & \text{if }\alpha_{i},\alpha_{j}\in\Delta\label{eq:r0CfMatcnd1}\\
a_{\gamma\left(i\right),j}+a_{ji}=0 &  & \text{if }\alpha_{i}\in\Gamma_{1},\alpha_{j}\in\Delta.\label{eq:r0CfMatcnd2}
\end{eqnarray}
 Define two matrices, 
\begin{align*}
A_{ij} & =\begin{cases}
1 & i=j\\
-1 & i=j+1\\
0 & \text{otherwise, }
\end{cases}\\
\\
B_{ij} & =\begin{cases}
1 & \left(\alpha_{i},\alpha_{j}\right)\in\left\{ \left(\alpha,\beta\right),\left(\beta-1,\alpha\right),\left(\beta,\alpha+1\right)\right\} \\
-1 & \left(\alpha_{i},\alpha_{j}\right)\in\left\{ \left(\beta,\alpha\right),\left(\alpha,\beta-1\right),\left(\alpha+1,\beta\right)\right\} \\
0 & \text{otherwise.}
\end{cases}
\end{align*}
It is not hard to see that for the BD triple $T_{\alpha\beta}=\left(\left\{ \alpha\right\} ,\left\{ \alpha+1\right\} ,\gamma:\alpha\mapsto\alpha+1\right)$
(i.e., $\beta=\alpha+1$) the matrix $A$ satisfies conditions \eqref{eq:r0CfMatcnd1}
and \eqref{eq:r0CfMatcnd2}, and can serve as the coefficient matrix
that determines $r_{0}$. When $\beta>\alpha+1$, we take $A+B$ as
the coefficient matrix, and conditions \eqref{eq:r0CfMatcnd1} and
\eqref{eq:r0CfMatcnd2} are satisfied again. So for $\beta=\alpha+1$
take

\begin{equation}
r_{0}^{\alpha\beta}=\sum_{i=1}^{n-1}\hat{h}_{i}\otimes\hat{h}_{i}-\sum_{i=1}^{n-2}\hat{h}_{i+1}\otimes\hat{h}_{i}+\hat{h}_{\alpha}\wedge\hat{h}_{\beta}+\hat{h}_{\beta-1}\wedge\hat{h}_{\alpha}+\hat{h}_{\beta}\wedge\hat{h}_{\alpha+1},\label{eq:r0def}
\end{equation}
 and for $\beta\neq\alpha+1$ 
\begin{equation}
r_{0}^{\alpha\beta}=\sum_{i=1}^{n-1}\hat{h}_{i}\otimes\hat{h}_{i}-\sum_{i=1}^{n-2}\hat{h}_{i+1}\otimes\hat{h}_{i};\label{eq:r0defbneqa+1}
\end{equation}
Note that in the standard case condition \eqref{eq:r0CfMatcnd2} is
empty, so we can use $r_{0}^{\alpha\beta}$ in the the standard case
as well. \\
To prove the Lemma, it is enough to show that \eqref{eq:RplsSmplForm}
holds for all elements of the basis $\left\{ E_{\delta}\right\} _{\delta\in\Phi}\cup\left\{ h_{k}\right\} _{k=1}^{n-1}$.
Recall that $\left\langle E_{-i},E_{j}\right\rangle =\delta_{ij}$: 

1. $\eta=E_{\delta},$ with $\delta\in\Phi^{+}$. so 
\begin{eqnarray*}
\left\langle r,\eta\otimes\zeta\right\rangle  & = & \left\langle \left(E_{-\delta}\otimes E_{\delta}+\sum_{\alpha\prec\beta}E_{-\alpha}\wedge E_{\beta}\right),E_{\delta}\otimes\zeta\right\rangle \\
 & = & \begin{cases}
\left\langle E_{\delta},\zeta\right\rangle  & \delta\notin\Gamma_{1}\\
\left\langle E_{\delta},\zeta\right\rangle +\sum_{\delta\prec\beta}\left\langle E_{\beta},\zeta\right\rangle  & \delta\in\Gamma_{1},
\end{cases}
\end{eqnarray*}
 and $R_{+}\left(E_{\delta}\right)=\begin{cases}
E_{\delta} & \delta\notin\Gamma_{1}\\
\left(E_{\delta}-\sum_{\delta\prec\beta}E_{\beta}\right) & \delta\in\Gamma_{1},
\end{cases}$in accordance with \eqref{eq:RplsSmplForm}.

2. $\eta=E_{-\delta},$ with $\delta\in\Phi^{+}$. 
\begin{eqnarray*}
\left\langle r,\eta\otimes\zeta\right\rangle  & = & \left\langle \left(\sum_{\alpha\prec\beta}E_{-\alpha}\wedge E_{\beta}\right),E_{-\delta}\otimes\zeta\right\rangle \\
 & = & \begin{cases}
0 & \delta\notin\Gamma_{2}\\
-\sum_{\alpha\prec\delta}\left\langle E_{-\alpha},\zeta\right\rangle  & \delta\in\Gamma_{2},
\end{cases}
\end{eqnarray*}
 hence $R_{+}\left(E_{-\delta}\right)=\begin{cases}
0 & \delta\notin\Gamma_{2}\\
-\sum_{\alpha\prec\delta}\left\langle E_{-\alpha},\zeta\right\rangle  & \delta\in\Gamma_{2},
\end{cases}$ which also fits \eqref{eq:RplsSmplForm}.

3. $\eta=h_{k}.$ 
\begin{eqnarray*}
\left\langle r,\eta\otimes\zeta\right\rangle  & = & \left\langle \sum_{i=1}^{n-2}\hat{h}_{i}\wedge\hat{h}_{i+1},h_{k}\otimes\zeta\right\rangle \\
 & = & \left\langle \hat{h}_{k+1},\zeta\right\rangle -\left\langle \hat{h}_{k-1},\zeta\right\rangle 
\end{eqnarray*}
 (with $\hat{h}_{0}=\hat{h}_{n}=0$ ). Therefore $R_{+}\left(h_{k}\right)=\left(\hat{h}_{k+1}-\hat{h}_{k-1}\right)$.
Expressing $e_{kk}$ as a linear combination of $\left\{ h_{i}\right\} _{i=1}^{n-1}\cup\left\{ \Id\right\} $
implies \eqref{eq:RplsSmplForm}.

4. Last, look at $\eta=\Id$. Here it is clear that 
\[
\left\langle r,\Id\otimes\zeta\right\rangle =0.
\]
  This implies $R_{+}\left(\Id\right)=0,$ and the proof is complete. 
\end{proof}

\subsection{Bracket computations \label{sub:Bracket-computations}}
\begin{lem}
\label{lem:PsnBrcktDiff}For any two functions $f,g$ on $SL_{n},$
\begin{eqnarray}
\left\{ f,g\right\} _{\alpha\beta}-\left\{ f,g\right\} _{std} & = & f^{\alpha\leftarrow\alpha+1}g^{\beta+1\leftarrow\beta}-f^{\beta+1\leftarrow\beta}g^{\alpha\leftarrow\alpha+1}\nonumber \\
 &  & +f_{\beta\leftarrow\beta+1}g_{\alpha+1\leftarrow\alpha}-f_{\alpha+1\leftarrow\alpha}g_{\beta\leftarrow\beta+1}.\label{eq:PsnBrcktDiff}
\end{eqnarray}
\end{lem}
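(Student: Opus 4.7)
The plan is to derive the identity as a direct consequence of Lemma~\ref{lem:TheOperatorRpls} together with the trace formula~\eqref{eq:BrcktthTrc}. The key observation is that the two brackets $\{\cdot,\cdot\}_{\alpha\beta}$ and $\{\cdot,\cdot\}_{std}$ on $SL_n$ are built from the same choice of $r_0^{\alpha\beta}$ (this is explicitly noted in the proof of Lemma~\ref{lem:TheOperatorRpls}), so they differ only in the contribution of the off-diagonal non-Cartan summands of~\eqref{eq:RmtxCons}. Under the decomposition \eqref{eq:RplsSmplForm}, the pieces $\eta_{>0}$ and $R_{\diag}(\eta)$ are identical in both cases, and only $R_{BD}$ distinguishes them. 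Hence, setting $\Delta R_+ := R_+^{\alpha\beta} - R_+^{std} = R_{BD}$, formula~\eqref{eq:BrcktthTrc} yields
\[
\{f,g\}_{\alpha\beta}-\{f,g\}_{std}=\langle R_{BD}(\nabla f\cdot X),\nabla g\cdot X\rangle-\langle R_{BD}(X\nabla f),X\nabla g\rangle.
\]

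Next I would evaluate the two trace pairings above. For the first one, set $A=\nabla f\cdot X$ and $B=\nabla g\cdot X$, so by definition $A_{ij}=f^{i\leftarrow j}$ and $B_{ij}=g^{i\leftarrow j}$. Using $R_{BD}(A)=A_{\alpha,\alpha+1}e_{\beta,\beta+1}-A_{\beta+1,\beta}e_{\alpha+1,\alpha}$ and the identities $\operatorname{Tr}(e_{\beta,\beta+1}B)=B_{\beta+1,\beta}$, $\operatorname{Tr}(e_{\alpha+1,\alpha}B)=B_{\alpha,\alpha+1}$, the first pairing becomes
\[
f^{\alpha\leftarrow\alpha+1}g^{\beta+1\leftarrow\beta}-f^{\beta+1\leftarrow\beta}g^{\alpha\leftarrow\alpha+1}.
\]
Analogously, for the second pairing take $\tilde A=X\nabla f$ and $\tilde B=X\nabla g$, so that $\tilde A_{ij}=f_{j\leftarrow i}$ and $\tilde B_{ij}=g_{j\leftarrow i}$; the same computation gives
\[
f_{\alpha+1\leftarrow\alpha}g_{\beta\leftarrow\beta+1}-f_{\beta\leftarrow\beta+1}g_{\alpha+1\leftarrow\alpha}.
\]

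Subtracting the second expression from the first recovers exactly the right-hand side of \eqref{eq:PsnBrcktDiff}. The main (and only) obstacle is bookkeeping: one must be careful that the convention $f^{i\leftarrow j}=(\nabla f\cdot X)_{ij}$ versus $f_{j\leftarrow i}=(X\nabla f)_{ij}$ is applied consistently when reading off matrix entries from the trace pairings, and in particular that the transposition implicit in the lower-index notation is what produces the sign flip between the ``upper'' and ``lower'' contributions in the final formula. Once these identifications are made, no further calculation is required.
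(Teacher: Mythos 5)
Your proposal is correct and follows essentially the same route as the paper: it identifies $R_+^{\alpha\beta}-R_+^{std}=R_{BD}$ via Lemma \ref{lem:TheOperatorRpls} (using that both brackets share the same $r_0^{\alpha\beta}$) and then evaluates the two trace pairings in \eqref{eq:BrcktthTrc} entrywise, with the correct reading of the conventions $f^{i\leftarrow j}=(\nabla f\cdot X)_{ij}$ and $f_{j\leftarrow i}=(X\nabla f)_{ij}$. No gaps.
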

\begin{proof}
Let $r_{\alpha\beta}$ and $r_{std}$ be the R-matrices associated
with BD data $\left\{ \alpha\right\} \to\left\{ \beta\right\} $ and
$\emptyset\to\emptyset$, respectively. Using \eqref{eq:BrcktthTrc},
it is easy to see that the difference \eqref{eq:PsnBrcktDiff} comes
from the difference $R_{+}^{\alpha\beta}-R_{+}^{std}$. According
to Lemma \ref{lem:TheOperatorRpls}, this is 
\begin{equation}
R_{+}^{\alpha\beta}\left(\eta\right)-R_{+}^{std}\left(\eta\right)=\eta_{\alpha,\alpha+1}e_{\beta,\beta+1}-\eta_{\beta+1,\beta}e_{\alpha+1,\alpha}.
\end{equation}
Write $R_{d}=R_{+}^{\alpha\beta}-R_{+}^{std}$, so now 
\begin{align*}
 & \left\{ f,g\right\} _{\alpha\beta}-\left\{ f,g\right\} _{std}\\
 & =\left\langle R_{d}\left(\nabla f(X)X\right),\nabla g(X)X\right\rangle -\left\langle R_{d}\left(X\nabla f(X)\right),X\nabla g(X)\right\rangle \\
 & =\left(\nabla f(X)X\right)_{\alpha,\alpha+1}\left(\nabla g(X)X\right)_{\beta+1,\beta}-\left(\nabla f(X)X\right)_{\beta+1,\beta}\left(\nabla g(X)X\right)_{\alpha,\alpha+1}\\
 & -\left(X\nabla f(X)\right)_{\alpha,\alpha+1}\left(X\nabla g(X)\right)_{\beta+1,\beta}-\left(X\nabla f(X)\right)_{\beta+1,\beta}\left(X\nabla g(X)\right)_{\alpha,\alpha+1}\\
 & =f^{\alpha\leftarrow\alpha+1}g^{\beta+1\leftarrow\beta}-f^{\beta+1\leftarrow\beta}g^{\alpha\leftarrow\alpha+1}-f_{\alpha+1\leftarrow\alpha}g_{\beta\leftarrow\beta+1}+f_{\beta\leftarrow\beta+1}g_{\alpha+1\leftarrow\alpha}.
\end{align*}
 \end{proof}
\begin{cor}
If $f,g\in\mathcal{B}_{\alpha\beta}\cap\mathcal{B}_{std}$ then $\left\{ f,g\right\} _{\alpha\beta}=\left\{ f,g\right\} _{std}$.\label{cor:fginSbrckteq}\end{cor}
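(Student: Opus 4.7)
The plan is to invoke Lemma~\ref{lem:PsnBrcktDiff} and show that every one of the four summands on the right-hand side of \eqref{eq:PsnBrcktDiff} vanishes when $f,g\in\mathcal{B}_{\alpha\beta}\cap\mathcal{B}_{std}$. The cleanest route is to prove the two implications
\[
f\in\mathcal{B}_{\alpha\beta}\cap\mathcal{B}_{std}\ \Longrightarrow\ f^{\alpha\leftarrow\alpha+1}=0\ \text{and}\ f_{\beta\leftarrow\beta+1}=0,
\]
and symmetrically for $g$; once these are in hand, every product appearing in \eqref{eq:PsnBrcktDiff} contains a vanishing factor and the whole difference is $0$.

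To establish the first vanishing I would write $f=f_{ij}=\det M_{ij}(X)$ for some index $(i,j)$ that, by hypothesis, is \emph{not} among those replaced in the construction of $\mathcal{B}_{\alpha\beta}$. Recall that $M_{ij}(X)$ has column range $[j,n]$ when $j>i$ and $[j,n-i+j]$ when $j\leq i$. The combination $f^{\alpha\leftarrow\alpha+1}$ is nontrivial only when column $\alpha$ appears in that range while column $\alpha+1$ does not. The case $j>i$ forces $\alpha=n$, which is impossible since $\alpha\in\Delta$; the case $j\leq i$ forces $\alpha=n-i+j$, i.e.\ $(i,j)=(i,i+\alpha-n)$ with $i\in[n+1-\alpha,n]$. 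But this set of indices is exactly the first replaced family $\{(n+j'-\alpha,j'):j'\in[\alpha]\}$ from Section~\ref{sub:Constructing-a-log}, contradicting the assumption that $(i,j)$ is unreplaced. A symmetric argument, this time tracking the row range of $M_{ij}(X)$ (namely $[i,n-j+i]$ when $j>i$ and $[i,n]$ when $j\leq i$), identifies the condition $f_{\beta\leftarrow\beta+1}\neq 0$ with membership of $(i,j)$ in the second replaced family $\{(i',n+i'-\beta):i'\in[\beta]\}$, and the same contradiction yields $f_{\beta\leftarrow\beta+1}=0$ on the intersection.

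Substituting $f^{\alpha\leftarrow\alpha+1}=g^{\alpha\leftarrow\alpha+1}=f_{\beta\leftarrow\beta+1}=g_{\beta\leftarrow\beta+1}=0$ into \eqref{eq:PsnBrcktDiff} makes the entire right-hand side vanish, giving $\{f,g\}_{\alpha\beta}=\{f,g\}_{std}$. I do not anticipate any genuine obstacle: the proof is essentially a bookkeeping match between the two ``boundary-column/boundary-row'' conditions that force $f^{\alpha\leftarrow\alpha+1}$ or $f_{\beta\leftarrow\beta+1}$ to be nonzero, and the two families of indices that are modified in passing from $\mathcal{B}_{std}$ to $\mathcal{B}_{\alpha\beta}$. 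The only mild care needed is the case split $j>i$ versus $j\leq i$ in the shape of $M_{ij}(X)$, together with the standing hypotheses $\alpha,\beta\in\Delta$ (hence $\alpha,\beta<n$) to rule out the degenerate cases $\alpha=n$ or $\beta=n$; the even-$n$ special case $\alpha=n/2$ or $\beta=n/2$ only affects the shape of $\tilde M_{ij}$, not the set of replaced indices, and so does not enter the argument.
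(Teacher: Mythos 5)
Your proposal is correct and takes essentially the same route as the paper's own proof: both invoke Lemma \ref{lem:PsnBrcktDiff} and note that $f^{\alpha\leftarrow\alpha+1}$ (resp.\ $f_{\beta\leftarrow\beta+1}$) can be nonzero only for the functions $f_{n+j-\alpha,j}$ (resp.\ $f_{i,n+i-\beta}$), which are exactly the ones excluded from $\mathcal{B}_{\alpha\beta}$, so all four terms in \eqref{eq:PsnBrcktDiff} vanish. Your case split on the shape of $M_{ij}(X)$ merely spells out the bookkeeping the paper states more briefly.
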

\begin{proof}
All functions in $\mathcal{B}_{std}$ are determinants of submatrices
of $X$. Let $f_{ij}$ be such a function as defined in \eqref{eq:fijAsXSubMat}.
The term $f_{ij}^{k\leftarrow m}$ is the determinant of a similar
submatrix, with column $m$ replacing column $k$ (i.e., every instance
of $x_{pk}$ is replaced by $x_{pm}$). Therefore, for $f_{ij}\in\mathcal{B}_{std}$,
the function $f_{ij}^{\alpha\leftarrow\alpha+1}$ is non zero only
if $f_{ij}$ is a determinant of a submatrix that contains column
$\alpha$ but not column $\alpha+1$. The only functions with this
property in $\mathcal{B}_{std}$ are determinants of submatrices of
the form $X_{\left[n+j-\alpha,n\right]}^{\left[j,\alpha\right]}$,
that is, the functions $f_{n+j-\alpha,j}$ with $j\in\left[\alpha\right]$.
But these functions are not in $\mathcal{B}_{\alpha\beta}$, because
$\alpha\in\Gamma_{1}$ (see the construction in Section \ref{sub:Constructing-a-log}).
Similarly, $f_{m\leftarrow k}$ is the determinant of the matrix obtained
by replacing the $m$-th of $X$ row by the $k$-th row. So the function
$\left(f_{ij}\right)_{\beta\leftarrow\beta+1}$ is non zero only if
$f_{ij}$ is the determinant of a submatrix with row $\beta$ and
without row $\beta+1$. The only functions in $\mathcal{B}_{std}$
that satisfy this condition are $f_{i,n+i-\beta},$ and these functions
are not in $\mathcal{B}_{\alpha\beta}$ because $\beta\in\Gamma_{2}$
(see Section \ref{sub:Constructing-a-log} again).
\end{proof}
The next Lemma describes the ``building blocks'' of the functions
in $\mathcal{B}_{\alpha\beta}\setminus\mathcal{B}_{std}$ and the
Poisson coefficients of these functions with respect to the standard
bracket. 
\begin{lem}
\label{lem:NewFuncLCandCf}1. The function $f_{n+k-\alpha,k}^{\rightarrow}$
(with $k\in[\alpha]$) is log canonical with all functions $g\in\mathcal{B}_{\alpha\beta}\cap\mathcal{B}_{std}$,
provided $g\neq f_{n+m-\alpha,m}$ for some $m>k$, w.r.t. the standard
bracket $\left\{ \cdot,\cdot\right\} _{std}$. In this case the Poisson
coefficient is 
\begin{equation}
\omega_{f_{n+k-\alpha,k}^{\rightarrow},g}=\omega_{f_{n+k-\alpha,k},g}+\omega_{x_{n,\alpha+1},g}-\omega_{x_{n\alpha},g}.
\end{equation}
2. The function $f_{1,\beta+1}^{\leftarrow}$ is log canonical with
all functions $g\in\mathcal{B}_{\alpha\beta}\cap B_{std}$, provided
 $g\neq f_{m,\beta+1}$ for some $m\in[2,n]$ w.r.t. the standard
bracket $\left\{ \cdot,\cdot\right\} _{std}$. In this case the Poisson
coefficient is 
\begin{equation}
\omega_{f_{1,\beta+1}^{\leftarrow},g}=\omega_{f_{1,\beta+1},g}+\omega_{x_{n\beta},g}-\omega_{x_{n,\beta+1},g}.
\end{equation}
 \end{lem}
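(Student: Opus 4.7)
The plan is to exploit the fact that $f_{n+k-\alpha,k}^{\rightarrow}$ is itself a minor of $X$, namely
$$
f_{n+k-\alpha,k}^{\rightarrow}=\det X_{[n+k-\alpha,n]}^{\{k,\ldots,\alpha-1,\alpha+1\}},
$$
and to compute $\{f_{n+k-\alpha,k}^{\rightarrow},g\}_{std}$ directly from \eqref{eq:BrcktthTrc} using the form of $R_+^{std}$ given in Lemma \ref{lem:TheOperatorRpls} (with $R_{BD}=0$ in the standard case). The structural input is that the standard Sklyanin bracket produces log-canonical relations between two minors whose row and column index sets are suitably nested or disjoint, and the shifted minor $f_{n+k-\alpha,k}^{\rightarrow}$ differs from the solid minor $f_{n+k-\alpha,k}$ only in its rightmost column, so the compatibility either persists (giving a log-canonical bracket with a modified coefficient) or fails in a precisely identifiable way (the exceptional $g$'s).

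First I would compare the bracket $\{f_{n+k-\alpha,k}^{\rightarrow},g\}_{std}$ with the known log-canonical relation $\{f_{n+k-\alpha,k},g\}_{std}=\omega_{f_{n+k-\alpha,k},g}\,f_{n+k-\alpha,k}\,g$ via the Laplace expansion of each function along its rightmost column:
$$
f_{n+k-\alpha,k}=\sum_{\ell=n+k-\alpha}^{n}(-1)^{\ell+\alpha-k+1}\,x_{\ell,\alpha}\,N_\ell,\qquad f_{n+k-\alpha,k}^{\rightarrow}=\sum_{\ell=n+k-\alpha}^{n}(-1)^{\ell+\alpha-k+1}\,x_{\ell,\alpha+1}\,N_\ell,
$$
where each $N_\ell=\det X_{[n+k-\alpha,n]\setminus\{\ell\}}^{[k,\alpha-1]}$ is a common cofactor. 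Applying the Leibniz rule to $\{\cdot,g\}_{std}$ on these two sums shows that the difference $\{f_{n+k-\alpha,k}^{\rightarrow},g\}_{std}-\{f_{n+k-\alpha,k},g\}_{std}$ collapses to a sum over $\ell$ of terms $\bigl(\{x_{\ell,\alpha+1},g\}-\{x_{\ell,\alpha},g\}\bigr)N_\ell$ plus cross terms from $\{N_\ell,g\}$. The latter cancel by the log-canonicity of solid minors with $g$.

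The second and central step is to show that the Poisson coefficient $\omega_{x_{\ell,\alpha+1},g}-\omega_{x_{\ell,\alpha},g}$ is independent of $\ell$ and equals the stated difference. The cleanest route is a torus-weight argument: the standard Sklyanin bracket is covariant under the maximal-torus action $X\mapsto h_1Xh_2$, so log-canonical coefficients of weight eigenvectors depend only on their weights, and columns $\alpha$ and $\alpha+1$ of $X$ have weights that differ by a single simple root, independently of the row $\ell$. Evaluating at $\ell=n$ pins the constant down to $\omega_{x_{n,\alpha+1},g}-\omega_{x_{n,\alpha},g}$, which combined with the previous step yields the stated formula.

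The assumption $g\ne f_{n+m-\alpha,m}$ for $m>k$ is precisely the condition that prevents a non-diagonal Poisson correction: for the excluded $g$, the column index set of $g$ contains $\alpha$ but not $\alpha+1$ while its row set is nested inside $[n+k-\alpha,n]$, and this is exactly the configuration in which $\{x_{\ell,\alpha+1},g\}_{std}$ contains a term not proportional to $g$. The second statement, for $f_{1,\beta+1}^{\leftarrow}$, is proved by the same method applied to the Laplace expansion of $f_{1,\beta+1}$ along its leftmost column (swapping column $\beta+1$ for column $\beta$), or by noting the analogous row-swap symmetry after conjugation by $\omega_0$. The main obstacle will be Step two: verifying the $\ell$-independence of the coefficient cleanly, which will most likely require either explicitly stating and using a weight-equivariance property of the standard Sklyanin bracket, or otherwise carrying out the direct combinatorial computation of brackets of minors and checking that the bilinear contributions collapse to a single overall factor.
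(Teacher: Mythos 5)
Your overall strategy (compare $f_{n+k-\alpha,k}^{\rightarrow}$ with $f_{n+k-\alpha,k}$ column-by-column and reduce to a coefficient difference pinned down at row $n$) is different from the paper's, which instead shows that $f_{n+k-\alpha,k}^{\rightarrow}$ and $f_{1,\beta+1}^{\leftarrow}$ are themselves cluster variables of the \emph{standard} structure $\mathcal{C}_{std}$, obtained from the initial seed by the explicit mutation sequences $\left(f_{n\alpha},f_{n-1,\alpha-1},\ldots\right)$ and $\left(f_{n,\beta+1},f_{n-1,\beta+1},\ldots\right)$, identifying each exchanged variable via the Desnanot--Jacobi identity \eqref{eq:DesJacId}--\eqref{eq:DesJacIdM}; log canonicity with every non-mutated initial variable then follows from compatibility of $\mathcal{C}_{std}$ with $\left\{ \cdot,\cdot\right\} _{std}$, and the coefficient formula drops out recursively from the exchange relations via the Leibniz rule. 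Your route, as written, has a genuine gap at its central step.

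Concretely: (i) you treat $\omega_{x_{\ell,\alpha+1},g}-\omega_{x_{\ell,\alpha},g}$ as a well-defined quantity for every row $\ell$ in the range, but individual matrix entries are \emph{not} log canonical with general minors under the standard Sklyanin bracket (already in $GL_{2}$ one has $\left\{ x_{11},x_{22}\right\} _{std}=x_{12}x_{21}$); only the bottom-row entries $x_{n,\alpha},x_{n,\alpha+1}$ are initial cluster variables, which is exactly why the statement involves them and not $x_{\ell,\alpha}$ for $\ell<n$. The torus-weight covariance you invoke only constrains the weights of the terms occurring in a bracket; it does not upgrade a non-log-canonical bracket to a log-canonical one, so it cannot supply the $\ell$-independence you need. (ii) The cofactors $N_{\ell}=\det X_{[n+k-\alpha,n]\setminus\{\ell\}}^{[k,\alpha-1]}$ are not contiguous (solid) minors for interior $\ell$, so their log canonicity with $g$ is not available, and even granting it the cross terms $x_{\ell,\alpha+1}\left\{ N_{\ell},g\right\} -x_{\ell,\alpha}\left\{ N_{\ell},g\right\} $ do not cancel: they would recombine into a contribution proportional to $\left(f^{\rightarrow}-f\right)g$ only if all the coefficients $\omega_{N_{\ell},g}$ were equal, and they would still have to be accounted for in the final coefficient. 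The cancellation, after summing over the Laplace expansion, of all the non-log-canonical pieces is precisely the nontrivial content of the lemma; it is what the paper's mutation argument circumvents (and what its direct $R_{+}$-computation in Lemma \ref{lem:SpeFuncBrckt} handles explicitly for the exceptional $g$, where the extra term $f_{n+k-\alpha,k}g^{\rightarrow}$ indeed survives, consistent with your heuristic for why those $g$ are excluded). To repair your argument you would either have to carry out that full direct computation with the operator $R_{+}$ of Lemma \ref{lem:TheOperatorRpls}, or switch to the paper's mutation-sequence argument.
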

\begin{proof}
The proof will use the \emph{Desnanot--Jacobi }identity (see \cite{Bressoud}):
for a square matrix $A$, denote by ``hatted'' subscripts and superscripts
deleted rows and columns, respectively. Then 
\begin{equation}
\det A\cdot\det A_{\hat{r}_{1},\hat{r}_{2}}^{\hat{c}_{1},\hat{c}_{2}}=\det A_{\hat{r}_{1}}^{\hat{c}_{1}}\cdot\det A_{\hat{r}_{2}}^{\hat{c}_{2}}-\det A_{\hat{r}_{2}}^{\hat{c}_{1}}\cdot\det A_{\hat{r}_{1}}^{\hat{c}_{2}}.\label{eq:DesJacId}
\end{equation}
 By adding an appropriate row, we get a similar result for a non square
matrix $B$ with number of rows greater by one than the number of
columns: 
\begin{equation}
\det B_{\hat{r_{1}}}\det B_{\hat{r}_{2}\hat{r}_{3}}^{\hat{c}_{1}}=\det B_{\hat{r}_{2}}\det B_{\hat{r}_{1}\hat{r}_{3}}^{\hat{c}_{1}}-\det B_{\hat{r}_{3}}\det B_{\hat{r}_{1}\hat{r_{2}}}^{\hat{c}_{1}},\label{eq:DesJacIdM}
\end{equation}
 and naturally, a similar identity can be obtained for a matrix with
number of columns greater by one than the number of rows. 

Start with statement 1. of the Lemma. We will show that $f_{n+k-\alpha,k}^{\rightarrow}$
is a cluster variable that can be obtained from the initial cluster
through the mutation sequence $\left(f_{n\alpha},f_{n-1,\alpha-1},\ldots,f_{n+k-1-\alpha,k-1}\right)$.
Look at the initial quiver described in Section \ref{sub:BisLC},
and mutate in direction $f_{n\alpha}$. We can assume $\alpha>1$
because if $\alpha=1$ then $f_{n\alpha}$ is frozen. In this case
statement 1 holds trivially, with $f_{n\alpha}^{\rightarrow}=f_{n,\alpha+1}\in\mathcal{B}_{std}$.
For $\alpha>1$ the exchange rule is 
\begin{eqnarray}
f_{n\alpha}\cdot f{}_{n\alpha}^{\prime} & = & f_{n,\alpha-1}f_{n-1,\alpha}+f_{n-1,\alpha-1}f_{n,\alpha+1}\nonumber \\
 & = & x_{n,\alpha-1}\left|\begin{array}{cc}
x_{n-1,\alpha} & x_{n-1,\alpha+1}\\
x_{n\alpha} & x_{n,\alpha+1}
\end{array}\right|+\left|\begin{array}{cc}
x_{n-1,\alpha-1} & x_{n-1,\alpha}\\
x_{n,\alpha-1} & x_{n\alpha}
\end{array}\right|x_{n,\alpha+1}\label{eq:ExRl@fna}\\
 & = & x_{n\alpha}\left|\begin{array}{cc}
x_{n-1,\alpha-1} & x_{n-1,\alpha+1}\\
x_{n,\alpha-1} & x_{n,\alpha+1}
\end{array}\right|=x_{n\alpha}f_{n-1,\alpha-1}^{\rightarrow},\nonumber 
\end{eqnarray}
 which implies $f{}_{n\alpha}^{\prime}=f_{n-1,\alpha-1}^{\rightarrow}.$
The arrows of the quiver take the following changes:\\
the arrows $\left(n,\alpha+1\right)\to\left(n-1,\alpha\right),\left(n-1,\alpha-1\right)\to\left(n,\alpha-1\right)$
and $\left(n-1,\alpha-1\right)\to\left(n-1,\alpha\right)$ are removed,
and an arrow $\left(n,\alpha-1\right)\to\left(n,\alpha+1\right)$
is added. All arrows incident to $\left(n,\alpha\right)$ are inverted.
Therefore the exchange rule at $\left(n-1,\alpha-1\right)$ is now
\[
f_{n-1,\alpha-1}\cdot f{}_{n-1,\alpha-1}^{\prime}=f{}_{n\alpha}^{\prime}f_{n-2,\alpha-2}+f_{n-2,\alpha-1}f_{n-1,\alpha-2}.
\]
 Proceed with the mutation sequence $\left(f_{n\alpha},f_{n-1,\alpha-1},\ldots,f_{n+k-1-\alpha,k-1}\right)$.
Assume by induction that for $m\in[\alpha]$, mutating at $f_{n+m-\alpha,m}$
yields the exchanged variable 
\begin{equation}
f{}_{n+m-\alpha,m}^{\prime}=f_{n-m-1,\alpha-m-1}^{\rightarrow},
\end{equation}

and that the exchange rule at $f_{n-m-1,\alpha-m-1}$ is now 
\begin{eqnarray*}
f_{n-m-1,\alpha-m-1}\cdot f{}_{n-m-1,\alpha-m-1}^{\prime} & = & f{}_{n+m-\alpha,m}^{\prime}f_{n-m-2,\alpha-m-2}\\
 & + & f_{n-m-2,\alpha-m-1}f_{n-m-1,\alpha-m-2}.
\end{eqnarray*}
 Write $A=X_{\left[n-m-2,n\right]}^{\left[\alpha-m-2,\alpha+1\right]}$
and let $\ell$ be the last column of $A$. Using \eqref{eq:DesJacIdM}
we get 
\begin{eqnarray*}
 &  & f_{n-m-1,\alpha-m-1}\cdot f{}_{n-m-1,\alpha-m-1}^{\prime}\\
 & = & \det A_{\hat{1}}^{\hat{1},\widehat{\ell-1}}\det A^{\hat{\ell}}+\det A_{\hat{1}}^{\widehat{\ell-1},\hat{\ell}}\det A^{\hat{m-2}}\\
 & = & \det A_{\hat{1}}^{\hat{1},\hat{\ell}}\det A^{\widehat{\ell-1}}\\
 & = & f_{n-m-1,\alpha-m-1}\cdot f_{n-m-2,\alpha-m-2}^{\rightarrow},
\end{eqnarray*}
and therefore 
\[
f{}_{n-m-1,\alpha-m-1}^{\prime}=f_{n+m-2,\alpha-m-2}^{\rightarrow}
\]
 The quiver mutates as follows: arrows $\left(n-m-2,\alpha-m-2\right)\to\left(n-m-2,\alpha-m-1\right)$and
$\left(n-m-2,\alpha-m-2\right)\to\left(n-m-1,\alpha-m-2\right)$ are
removed, arrows $\left(n-m-2,\alpha-m-1\right)\to\left(n+m-\alpha,m\right)$
and $\left(n-m-1,\alpha-m-2\right)\to\left(n+m-\alpha,m\right)$ added,
and all arrows incident to $n-m-1,\alpha-m-1$ are inverted. Therefore
the mutation rule at the next cluster variable of the sequence will
be now 
\begin{eqnarray*}
f_{n-m-2,\alpha-m-2}\cdot f{}_{n-m-2,\alpha-m-2}^{\prime} & = & f{}_{n-m-1,\alpha-m-1}^{\prime}f_{n-m-3,\alpha-m-3}\\
 &  & +f_{n-m-3,\alpha-m-2}f_{n-m-2,\alpha-m-3}.
\end{eqnarray*}
 That proves that after the mutation sequence $\left(f_{n\alpha},f_{n-1,\alpha-1},\ldots,f_{n-k+1,\alpha-k+1}\right)$
we get $f_{n-k+1,\alpha-k+1}^{\prime}=f_{n+k-\alpha,k}^{\rightarrow}$
and therefore it is log canonical with all cluster variables of the
initial cluster, except for $\left(f_{n\alpha},f_{n-1,\alpha-1},\ldots,f_{n-k+1,\alpha-k+1}\right)$
that were mutated.

Now for $g\neq f_{n+m-\alpha,m}$ with $m>k+1$, the coefficient $\omega_{f_{n+k-\alpha,k},g}$
can be computed: from the Leibniz rule for Poisson brackets, any triple
of functions $f_{1},f_{2},g$ such that $\left\{ f_{1},g\right\} =\omega_{1}f_{1}g$
and $\left\{ f_{2},g\right\} =\omega_{2}f_{2}g,$ must satisfy 
\[
\left\{ f_{1}f_{2},g\right\} =\left(\omega_{2}+\omega_{2}\right)f_{1}f_{2}g,
\]
 or, in other words $\omega_{f_{1}f_{2},g}=\omega_{f_{1},g}+\omega_{f_{2},g}$.
Applying this together with the linearity of the bracket to the exchange
rule \eqref{eq:ExRl@fna} we get 
\[
\omega_{f_{n\alpha},g}+\omega_{f^{\prime}}=\omega_{f_{n-1,\alpha-1},g}+\omega_{x_{n,\alpha+1},g},
\]
 which is 
\begin{equation}
\omega_{f_{n-1,\alpha-1}^{\rightarrow},g}=\omega_{f_{n-1,\alpha-1},g}+\omega_{x_{n,\alpha+1},g}-\omega_{f_{n\alpha},g}.
\end{equation}
Again, we proceed inductively: assume that 
\[
\omega_{f_{n-k+1,\alpha-k+1}^{\rightarrow},g}=\omega_{f_{n+k-\alpha,k},g}+\omega_{x_{n,\alpha+1},g}-\omega_{x{}_{n\alpha},g}
\]
 and the exchange rule at $f_{n+k-\alpha,k}$ is 
\[
f_{n+k-\alpha,k}\cdot f_{n-k-1,\alpha-k-1}^{\rightarrow}=f{}_{n-k+1,\alpha-k+1}^{\rightarrow}f_{n-k-1,\alpha-k-1}+f_{n+k-\alpha,k-1}f_{n-k-1,\alpha-k}.
\]
 This means that 
\[
\omega_{f_{n-k-1,\alpha-k-1}^{\rightarrow},g}=\omega_{f_{n-k+1,\alpha-k+1}^{\rightarrow},g}+\omega_{f_{n-k-1,\alpha-k-1},g}-\omega_{f_{n+k-\alpha,k},g},
\]
 and recursively this leads to 
\begin{equation}
\omega_{f_{n-k-1,\alpha-k-1}^{\rightarrow},g}=\omega_{f_{n-k-1,\alpha-k-1},g}+\omega_{x_{n,\alpha+1},g}-\omega_{x_{n\alpha},g},
\end{equation}
 which complete the proof of statement 1.

Next, look at statement 2. Here also, we will show that $f_{1,\beta+1}^{\leftarrow}$
is a cluster variable that can be obtained through a mutation sequence,
which in this case is $\left(f_{n,\beta+1},f_{n-1,\beta+1},\ldots,f_{2,\beta+1}\right)$.
First, mutate at $f_{n,\beta+1}.$ It is easy to see, just like in
\eqref{eq:ExRl@fna} that 
\[
f^{\prime}=\left|\begin{array}{cc}
x_{n-1,\beta} & x_{n-1,\beta+2}\\
x_{n,\beta} & x_{n,\beta+2}
\end{array}\right|=f_{n-1,\beta+1}^{\leftarrow}.
\]
 Just like we have already showed above, arrows 
 $\left(n,\beta+2\right)\to\left(n-1,\beta+1\right),\left(,n-1,\beta\right)\to\left(n,\beta\right)$
and $\left(n-1,\beta\right)\to\left(n-1,\beta+1\right)$ are removed
from the quiver, and an arrow $\left(n,\beta\right)\to\left(n,\beta+2\right)$
is added to it. In addition, all the arrows adjacent to $\left(n,\beta+1\right)$
are inverted. The exchange rule at $f_{n-1,\beta+1}$ is now 
\[
f_{n-1,\beta+1}f{}_{n-1,\beta+1}^{\prime}=f{}_{n,\beta+1}^{\prime}f_{n-2,\beta+1}+f_{n-1,\beta+2}f_{n-2,\beta}.
\]
Again we use induction on $m$ with the mutation sequence $\left(f_{n,\beta+1},f_{n-1,\beta+1},\ldots,f_{m,\beta+1}\right)$.
Assume that after mutating at $f_{m+1,\beta+1}$ we got 
\[
f^{\prime}=f_{m,\beta+1}^{\leftarrow}
\]
 and that the exchange rule at $f_{m,\beta+1}$ is 
\begin{equation}
f_{m,\beta+1}\cdot f{}_{m,\beta+1}^{\prime}=f{}_{m+1,\beta+1}^{\prime}f_{m-1,\beta+1}+f_{m,\beta+2}f_{m-1,\beta}.
\end{equation}
 If $m>\beta+1$ then we can set $\mu=\mu\left(\beta,m-1\right)$
and $B=X_{\left[m-1,n\right]}^{\left[\beta,\mu+1\right]}$. Then the
exchange rule is 
\begin{eqnarray*}
f_{m,\beta+1}\cdot f{}_{m,\beta+1}^{\prime} & = & \det B^{\hat{\beta}}\det B_{\hat{m-1}}^{\hat{2}\widehat{\mu+\beta}}+\det B^{\widehat{\mu+1}}\det B_{\hat{m-1}}^{\hat{2}\hat{\beta}}\\
 & = & \det B^{\hat{2}}\det B_{\hat{m-1}}^{\hat{\beta}\widehat{\mu+1}}=f_{m-1,\beta+1}^{\leftarrow}f_{m,\beta+1}.
\end{eqnarray*}
 If, on the other hand, $m\leq\beta+1$ we set $\mu=\mu\left(\beta,m-1\right)$
and $A=X_{\left[m-1,\mu\right]}^{\left[\beta,n\right]}$ so that the
exchange rule becomes 
\begin{eqnarray*}
f_{m,\beta+1}\cdot f{}_{m,\beta+1}^{\prime} & = & \det A\det A_{\widehat{m-1}\hat{\mu}}^{\hat{\beta}\widehat{\beta+1}}+\det A_{\hat{\mu}}^{\hat{\beta}}\det A_{\widehat{m-1}}^{\widehat{\beta+1}}\\
 & = & \det A_{\widehat{m-1}}^{\hat{\beta}}\det A_{\hat{\mu}}^{\widehat{\beta+1}}=f_{m,\beta+1}f_{m-1,\beta+1}^{\leftarrow},
\end{eqnarray*}
 hence 
\[
f{}_{m,\beta+1}^{\prime}=f_{m-1,\beta+1}^{\leftarrow}.
\]
 It is easy to see that the mutation of the quiver also agrees with
the induction hypothesis, and we can conclude that after the mutation
sequence 
\begin{equation}
f{}_{2,\beta+1}^{\prime}=f_{1,\beta+1}^{\leftarrow},
\end{equation}
 and therefore $f_{1,\beta+1}^{\leftarrow}$ is log canonical with
all functions $f_{ij}\in\mathcal{B}_{std}$, excluding the functions
$f_{m,\beta+1}$ that were mutated on the way. 

We can now compute the coefficients $\omega_{f_{m-1,\beta+1}^{\leftarrow},g}$
recursively like we did in the first statement and get for every $f_{m,\beta+1}\neq g\in\mathcal{B}_{std}$,
\begin{equation}
\omega_{f_{1,\beta+1}^{\leftarrow},g}=\omega_{f_{1,\beta+1},g}+\omega_{x_{n,\beta},g}-\omega_{x_{n,\beta+1},g}.
\end{equation}
This completes the proof for statement 2. 
\end{proof}
The functions $f_{1,\beta+1}^{\leftarrow}$ and $f_{\alpha+1,1}^{\uparrow}$
need some special attention:
\begin{lem}
\label{lem:ArFuncLC}For $k\in\left[\beta\right]$, the function $f_{k,n+k-\beta}^{\downarrow}$
is log canonical with $f_{1,\beta+1}^{\leftarrow}$ and $f_{\alpha+1,1}^{\uparrow}$.
The Poisson coefficients are 
\begin{eqnarray*}
\omega_{f_{k,n+k-\beta}^{\downarrow},f_{1,\beta+1}^{\leftarrow}} & = & \omega_{f_{k,n+k-\beta},f_{1,\beta+1}}+\omega_{f_{\beta+1,n},f_{1,\beta+1}}-\omega_{f_{\beta n},f_{1,\beta+1}}\\
 &  & +\omega_{f_{k,n+k-\beta},f_{n\beta}}+\omega_{f_{\beta+1,n},f_{n\beta}}-\omega_{f_{\beta n},f_{n\beta}}\\
 &  & -\omega_{f_{k,n+k-\beta},f_{n,\beta+1}}-\omega_{f_{\beta+1,n},f_{n,\beta+1}}+\omega_{f_{\beta n},f_{n,\beta+1}},\\
\omega_{f_{k,n+k-\beta}^{\downarrow},f_{\alpha+1,1}^{\uparrow}} & = & \omega_{f_{k,n+k-\beta},f_{\alpha+1,1}}+\omega_{f_{\beta+1,n},f_{\alpha+1,1}}-\omega_{f_{\beta n},f_{\alpha+1,1}}\\
 &  & +\omega_{f_{k,n+k-\beta},f_{\alpha n}}+\omega_{f_{\beta+1,n},f_{\alpha n}}-\omega_{f_{\beta n},f_{\alpha n}}\\
 &  & -\omega_{f_{k,n+k-\beta},f_{\alpha+1,n}}-\omega_{f_{\beta+1,n},f_{\alpha+1,n}}+\omega_{f_{\beta n},f_{\alpha+1,n}}.
\end{eqnarray*}
\end{lem}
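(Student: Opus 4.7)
The plan is to mirror the strategy used to prove Lemma \ref{lem:NewFuncLCandCf}, namely, to realize $f_{k,n+k-\beta}^{\downarrow}$ (and, analogously, $f_{\alpha+1,1}^{\uparrow}$) as cluster variables reached from the standard initial seed by an explicit mutation sequence, and then to combine two applications of the lemma to expand the Poisson coefficient. Under the transpose symmetry $X\mapsto X^t$ (which interchanges the roles of rows and columns and swaps the roles of $\alpha$ and $\beta$), $f_{k,n+k-\beta}^{\downarrow}$ is the analogue of $f_{n+k-\alpha,k}^{\rightarrow}$ and $f_{\alpha+1,1}^{\uparrow}$ is the analogue of $f_{1,\beta+1}^{\leftarrow}$. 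Thus the symmetric version of Lemma \ref{lem:NewFuncLCandCf} (which the paper refers to as Lemma \ref{lem:NewFuncLCandCfSym}) realizes $f_{k,n+k-\beta}^{\downarrow}$ via the sequence $(f_{\beta,n},f_{\beta-1,n-1},\ldots,f_{k,n+k-\beta})$ along the anti-diagonal ending at $(k,n+k-\beta)$, and realizes $f_{\alpha+1,1}^{\uparrow}$ via $(f_{\alpha+1,n},f_{\alpha+1,n-1},\ldots,f_{\alpha+1,2})$ along the row $\alpha+1$. Lemma \ref{lem:NewFuncLCandCf} itself realizes $f_{1,\beta+1}^{\leftarrow}$ via $(f_{n,\beta+1},\ldots,f_{2,\beta+1})$.

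Second, I would verify that these mutation sequences are compatible, in the sense that the vertices they touch are disjoint so that they can be performed in succession without interfering. The $\downarrow$-sequence touches the antidiagonal $\{(\beta,n),(\beta-1,n-1),\ldots,(k,n+k-\beta)\}$; the $\leftarrow$-sequence touches the vertical line $\{(m,\beta+1)\}_{m=2}^{n}$; and the $\uparrow$-sequence touches the horizontal line $\{(\alpha+1,m)\}_{m=2}^{n}$. A quick check shows that these sets are pairwise disjoint except possibly at isolated boundary positions (for instance, when $n=\beta+1$, which would force some $f_{?,n}$ and $f_{?,\beta+1}$ to coincide); those edge cases have to be handled separately, but they only affect a single coinciding mutation step. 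Once compatibility is established, we obtain a single cluster containing both $f_{k,n+k-\beta}^{\downarrow}$ and $f_{1,\beta+1}^{\leftarrow}$ (respectively $f_{\alpha+1,1}^{\uparrow}$), and compatibility of $\mathcal{C}_{std}$ with the standard bracket forces any two elements of this cluster to be log canonical with respect to $\{\cdot,\cdot\}_{std}$.

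Third, to obtain the explicit coefficient I would apply the (symmetric) Lemma \ref{lem:NewFuncLCandCf} twice. Taking $g=f_{1,\beta+1}^{\leftarrow}$ in the symmetric version of part~1, which is legitimate once step two has established log canonicity and the noninterference condition, gives
\[
\omega_{f_{k,n+k-\beta}^{\downarrow},\,f_{1,\beta+1}^{\leftarrow}}
=\omega_{f_{k,n+k-\beta},\,f_{1,\beta+1}^{\leftarrow}}
+\omega_{f_{\beta+1,n},\,f_{1,\beta+1}^{\leftarrow}}
-\omega_{f_{\beta n},\,f_{1,\beta+1}^{\leftarrow}}.
\]
Then applying part~2 of Lemma \ref{lem:NewFuncLCandCf} to each of the three summands, and using the identifications $x_{n\beta}=f_{n\beta}$ and $x_{n,\beta+1}=f_{n,\beta+1}$ (they are $1\times 1$ minors), expands each term into three, yielding exactly the nine-term formula stated. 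The same bookkeeping with $f_{\alpha+1,1}^{\uparrow}$ in place of $f_{1,\beta+1}^{\leftarrow}$, using the symmetric formula $\omega_{h,f_{\alpha+1,1}^{\uparrow}}=\omega_{h,f_{\alpha+1,1}}+\omega_{h,f_{\alpha n}}-\omega_{h,f_{\alpha+1,n}}$, gives the second identity.

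The principal obstacle will be careful verification of the noninterference and boundary conditions required for the two applications of Lemma \ref{lem:NewFuncLCandCf}: we must ensure that none of $f_{k,n+k-\beta}$, $f_{\beta+1,n}$, $f_{\beta n}$ falls into the forbidden family $\{f_{m,\beta+1}\}$ (which can happen only when $n=\beta+1$), and analogously for the $\uparrow$ direction. These degenerate cases correspond to $\beta=n-1$ or $\alpha=n-1$ and have to be inspected by hand; in each of them the relevant mutation sequence collapses, but the log-canonical relation can still be read off directly from the explicit exchange relation exhibited in the proof of Lemma \ref{lem:NewFuncLCandCf}, and the coefficient formula continues to hold.
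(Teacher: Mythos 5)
Your overall strategy is the same as the paper's: realize $f_{k,n+k-\beta}^{\downarrow}$ and $f_{1,\beta+1}^{\leftarrow}$ (resp.\ $f_{\alpha+1,1}^{\uparrow}$) as cluster variables obtained by the mutation sequences of Lemma \ref{lem:NewFuncLCandCf} and its transposed version, place them in a common cluster, and read off the coefficient by the same recursion. The gap is in your second step: the claim that the three mutation paths are pairwise disjoint except at ``isolated boundary positions'' such as $\beta=n-1$ is false, and the crossing is not a degenerate edge case but the generic situation that the paper's proof is entirely devoted to. The antidiagonal path $\left(\left(\beta,n\right),\left(\beta-1,n-1\right),\ldots\right)$ meets the column path $\left\{ \left(m,\beta+1\right)\right\} _{m=2}^{n}$ at the vertex $\left(\ell,\beta+1\right)$ with $\ell=2\beta-n+1$, which lies on both paths whenever $2\beta\geq n+1$ and $k$ is large enough; it meets the row path $\left\{ \left(\alpha+1,m\right)\right\} $ at $\left(\alpha+1,n-\beta+\alpha+1\right)$ whenever $k\geq\beta-\alpha$, which again happens for a large range of parameters. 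Once the antidiagonal sequence has been applied, the cluster variable sitting at the crossing vertex is no longer the standard function $f_{\ell,\beta+1}$, so the exchange relations used in the proof of Lemma \ref{lem:NewFuncLCandCf} for the column sequence are simply not the ones you encounter, and applying the two sequences ``in succession without interfering'' does not produce $f_{1,\beta+1}^{\leftarrow}$.

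The paper resolves exactly this point: after the first sequence it relabels the mutated antidiagonal vertices, truncates the column sequence at $\left(\ell+1,\beta+1\right)$, and then computes the two exchange steps at and around the crossing by hand with the Desnanot--Jacobi identity \eqref{eq:DesJacId}--\eqref{eq:DesJacIdM}, showing that one obtains $f_{\ell,\beta+1}^{\downarrow\leftarrow}$ and then $f_{\ell,\beta+1}^{\leftarrow}$, so that the sequence can be continued and $f_{1,\beta+1}^{\leftarrow}$ is again reached; only then does the coefficient recursion (your third step) go through. Your third step also leans on applying Lemma \ref{lem:NewFuncLCandCf} with $g$ equal to functions such as $f_{k,n+k-\beta}$ and $f_{\beta n}$, which are not in $\mathcal{B}_{\alpha\beta}\cap\mathcal{B}_{std}$ as the lemma's statement requires; this is repairable (what the proof of that lemma actually establishes is log canonicity with all of $\mathcal{B}_{std}$ minus the mutated column), but it should be said. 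As written, the key difficulty of the lemma -- the crossing of the mutation paths -- is asserted away rather than handled, so the proof is incomplete.
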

\begin{proof}
Naturally, Lemma \ref{lem:NewFuncLCandCf} could be helpful, but it
may seem that the proof does not hold: since the path $\left(f_{\beta n},f_{\beta-1,n-1},\ldots\right)$
crosses the paths $\left(f_{\alpha+1,n},f_{\alpha+1,n-1},\ldots\right)$
and $\left(f_{n,\beta+1},f_{n-1,\beta+1},\ldots\right)$, applying
the first mutation sequence followed by the second (or the third)
one, will not yield the function $f_{\alpha+1,1}^{\uparrow}$ (or
$f_{1,\beta+1}^{\leftarrow}$), because one of the cluster variables
had been mutated in the first sequence. However, this can be easily
settled. First, apply the sequence $\left(f_{\beta n},f_{\beta-1,n-1},\ldots,\right)$.
Now shift every mutated vertex $\left(\beta-m,n-m\right)$ of the
new quiver to the place $\left(\beta-m-1,n-m-1\right)$ i.e., move
it one row up and one column to the left. The quiver now looks locally
just like the initial one, with two changes at $f_{\beta n}$ and
at $f_{1,n+1-\beta}$. Then, set $\ell=2\beta-n+1$. Note that if
$\ell\leq1$ the paths do not cross each other, and there is no problem.
Now apply the sequence $\left(f_{n,\beta+1},f_{n-1,\beta+1},\ldots,f_{\ell+1,\beta+1}\right)$.
The quiver then reads the exact same exchange rules as the initial
quiver. At $f{}_{\ell+1,\beta+1}$ the exchange rule is then almost
the same as it was in the proof above, with one change: the function
$f_{\ell,\beta+1}$ is now replaced by $f_{\ell-1,\beta+1}^{\downarrow}$.
The exchange rule is 
\[
f_{\ell+1,\beta+1}\cdot f{}_{\ell+1,\beta+1}^{\prime}=f_{\ell,\beta}f_{\ell+2,\beta+2}+f{}_{\ell+1,\beta+1}^{\leftarrow}f{}_{\ell,\beta+1}^{\downarrow}.
\]
 So write $A=X_{\left[\ell,\beta+1\right]}^{\left[\beta,n\right]}$
and then 
\begin{eqnarray*}
f_{\ell+1,\beta+1}\cdot f{}_{\ell+1,\beta+1}^{\prime} & = & \det A\det A_{\hat{\ell}\widehat{\ell+1}}^{\hat{\beta}\widehat{\beta+1}}+\det A_{\hat{\ell}}^{\widehat{\beta+1}}\det A_{\hat{\beta}}^{\hat{\beta}}\\
 & = & \det A_{\hat{\ell}}^{\hat{\beta}}\det A_{\hat{\beta}}^{\widehat{\beta+1}}=f_{\ell+1,\beta+1}f_{\ell,\beta+1}^{\downarrow\leftarrow}.
\end{eqnarray*}

The picture is slightly different in the special case of $\beta=n-1$,
because now the column $\beta+1$ is the last one, but it is not hard
to see that the result is still 
\begin{equation}
f_{\ell+1,\beta+1}^{\prime}=f_{nn}^{\prime}=x_{n-2,n-1}=f_{\ell,\beta+1}^{\downarrow\leftarrow}.
\end{equation}
 Moving to the next step of the sequence, we mutate at $\left(\ell,\beta+1\right)$.
The corresponding cluster variable is $f_{\ell,\beta+1}^{\prime}=f_{\ell-1,\beta+1}^{\leftarrow}$
(since it was mutated in the sequence 
$\left(f_{\beta n},f_{\beta-1,n-1},\ldots,\right)$). 
The exchange rule here reads 
\begin{eqnarray*}
f_{\ell,\beta+1}^{\prime}\cdot f_{\ell,\beta+1}^{\prime\prime} & = & f_{\ell,\beta+1}^{\prime}f_{\ell,\beta+2}+f_{\ell+1,\beta+1}^{\prime}f_{\ell-1,\beta+1}\\
 & = & f_{\ell-1,\beta+1}^{\leftarrow}f_{\ell,\beta+2}+f_{\ell\beta}^{\downarrow}f_{\ell-1,\beta+1}
\end{eqnarray*}
 and \eqref{eq:DesJacId} can be used again, with $A=X_{\left[\ell-1,\beta-1,\beta+1\right]}^{\left[\beta,n\right]}$.
The result is 
\[
f_{\ell,\beta+1}^{\prime\prime}=f_{\ell,\beta+1}^{\leftarrow},
\]
 and again, the same result can be obtained in the case $\beta=n-1.$
So just like in the proof of Lemma \ref{lem:NewFuncLCandCf} we still
get $f_{1,\beta+1}^{\leftarrow}$ as a cluster variable, and so it
is log canonical with all the functions of the form $f_{k,n+k-\beta}^{\downarrow}$. 

The Poisson coefficients can now be computed just like in Lemma \ref{lem:NewFuncLCandCf}
so 
\begin{align*}
\omega_{f_{k,n+k-\beta}^{\downarrow},f_{1,\beta+1}^{\leftarrow}} & =\omega_{f_{k,n+k-\beta},f_{1,\beta+1}}+\omega_{f_{\beta+1,n},f_{1,\beta+1}}-\omega_{f_{\beta n},f_{1,\beta+1}}\\
 & +\omega_{f_{k,n+k-\beta},f_{n\beta}}+\omega_{f_{\beta+1,n},f_{n\beta}}-\omega_{f_{\beta n},f_{n\beta}}\\
 & -\omega_{f_{k,n+k-\beta},f_{n,\beta+1}}-\omega_{f_{\beta+1,n},f_{n,\beta+1}}+\omega_{f_{\beta n},f_{n,\beta+1}}.
\end{align*}
This can be done in the same way with the sequence $\left(f_{\alpha+1,n},\ldots,f_{\alpha+1,2}\right)$
to show that $f_{\alpha+1,1}^{\uparrow}$ is also log canonical with
all $f_{k,n+k-\beta}^{\downarrow}.$ The Poisson coefficient will
be 
\begin{align*}
\omega_{f_{k,n+k-\beta}^{\downarrow},f_{\alpha+1,1}^{\uparrow}} & =\omega_{f_{k,n+k-\beta},f_{\alpha+1,1}}+\omega_{f_{\beta+1,n},f_{\alpha+1,1}}-\omega_{f_{\beta n},f_{\alpha+1,1}}\\
 & +\omega_{f_{k,n+k-\beta},f_{\alpha n}}+\omega_{f_{\beta+1,n},f_{\alpha n}}-\omega_{f_{\beta n},f_{\alpha n}}\\
 & -\omega_{f_{k,n+k-\beta},f_{\alpha+1,n}}-\omega_{f_{\beta+1,n},f_{\alpha+1,n}}+\omega_{f_{\beta n},f_{\alpha+1,n}}.
\end{align*}

\end{proof}
The following Lemma computes the brackets of a function $f\in\mathcal{B}_{\alpha\beta}\cap\mathcal{B}_{std}$
with certain families of functions in $\mathcal{B}_{std}$. 
\begin{lem}
\label{lem:SpeFuncBrckt}1. Let $g=f_{k,\beta+1}$ with $k\in\left[2,n\right]$.
Then 
\begin{equation}
\left\{ f_{1,\beta+1}^{\leftarrow},g\right\} _{std}=\left(\omega_{f_{1,\beta+1},g}+\omega_{x_{n\beta},g}-\omega_{x_{n,\beta+1},g}\right)f_{1,\beta+1}^{\leftarrow}g+f_{1,\beta+1}g^{\leftarrow}\label{eq:brcktf1btandfkb}
\end{equation}
2. For $k\in\left[\alpha-1\right]$, let $g=f_{n+m-\alpha,m}$ with
$m\in\left[k+1,\alpha\right]$ . Then 
\begin{equation}
\left\{ g,f_{n+k-\alpha,k}^{\rightarrow}\right\} _{std}=\left(\omega_{g,f_{n+k-\alpha,k}}-\omega_{g,x_{n\alpha}}+\omega_{g,x_{n,\alpha+1}}\right)gf_{n+k-\alpha,k}^{\rightarrow}+f_{n+k-\alpha,k}g^{\rightarrow}\label{eq:brcktftkandfm}
\end{equation}
\end{lem}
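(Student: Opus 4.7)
The plan is as follows. Both parts are instances of the same computation: each expresses the bracket of a ``decorated'' basis function (obtained from one of $f_{1,\beta+1}$ or $f_{n+k-\alpha,k}$ by replacing a single column of its defining submatrix with the adjacent column) against a basis function whose defining submatrix shares that very column. I describe the approach for part~1; part~2 is handled identically, with the leftmost-column swap on $f_{1,\beta+1}$ replaced throughout by the rightmost-column swap on $f_{n+k-\alpha,k}$ and with $\beta,\beta+1$ replaced by $\alpha+1,\alpha$.

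The core observation is that $f_{1,\beta+1}^{\leftarrow}$ is itself a determinant of a submatrix of $X$, namely the $(n-\beta)\times(n-\beta)$ submatrix with row set $[1,n-\beta]$ and column set $\{\beta\}\cup[\beta+2,n]$. Consequently $\nabla f_{1,\beta+1}^{\leftarrow}$ and $\nabla f_{1,\beta+1}$ agree on all columns outside $\{\beta,\beta+1\}$; the column-$\beta$ entries of $\nabla f_{1,\beta+1}^{\leftarrow}$ coincide, up to a common sign, with the column-$(\beta+1)$ entries of $\nabla f_{1,\beta+1}$; and the column-$(\beta+1)$ entries of $\nabla f_{1,\beta+1}^{\leftarrow}$ vanish. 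Substituting this decomposition into the Sklyanin formula~\eqref{eq:BrcktthTrc}, and invoking Lemma~\ref{lem:TheOperatorRpls} in the standard case ($R_{BD}=0$), splits $\{f_{1,\beta+1}^{\leftarrow},g\}_{std}$ into a ``principal'' piece plus a ``correction'' piece.

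The principal piece is precisely what one would obtain if the column-swap commuted with $\nabla$: after exactly the same bookkeeping used in the proof of Lemma~\ref{lem:NewFuncLCandCf}, it assembles into the log-canonical contribution $(\omega_{f_{1,\beta+1},g}+\omega_{x_{n\beta},g}-\omega_{x_{n,\beta+1},g})\,f_{1,\beta+1}^{\leftarrow}\,g$. The correction piece is non-zero precisely because $g=f_{k,\beta+1}$ has its leftmost column indexed by $\beta+1$: the column-$\beta$ support of $\nabla f_{1,\beta+1}^{\leftarrow}$ then pairs non-trivially against the column-$(\beta+1)$ support of $\nabla g$ through the strictly upper-triangular part $\eta_{>0}$ of $R_{+}^{std}$ in Lemma~\ref{lem:TheOperatorRpls}.

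The main technical obstacle will be to identify this correction piece with the single product $f_{1,\beta+1}\cdot g^{\leftarrow}$. I plan to handle this via a Desnanot--Jacobi identity~\eqref{eq:DesJacId} applied to the smallest square submatrix of $X$ containing the defining submatrices of both $f_{1,\beta+1}$ and $g$ together with column $\beta$: the minors appearing in this identity are exactly $f_{1,\beta+1}^{\leftarrow}\,g$, $f_{1,\beta+1}\,g^{\leftarrow}$, and the cofactor sum that the Sklyanin formula produces from the column-swap correction, so the identification follows by direct inspection. Once this is in place, formula~\eqref{eq:brcktf1btandfkb} drops out, and part~2 follows by the same argument with the rightmost-column swap, using the analogous enlarged square submatrix to isolate the term $f_{n+k-\alpha,k}\cdot g^{\rightarrow}$.
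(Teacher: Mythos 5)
There is a genuine gap, and it sits in the very first step. Your ``core observation'' that $\nabla f_{1,\beta+1}^{\leftarrow}$ and $\nabla f_{1,\beta+1}$ agree on all columns outside $\{\beta,\beta+1\}$ is false: already for $n-\beta=2$ one has $\partial f_{1,\beta+1}/\partial x_{1,\beta+2}=-x_{2,\beta+1}$ while $\partial f_{1,\beta+1}^{\leftarrow}/\partial x_{1,\beta+2}=-x_{2,\beta}$, because every cofactor of $f_{1,\beta+1}^{\leftarrow}$ with respect to a column in $[\beta+2,n]$ retains column $\beta$ where the corresponding cofactor of $f_{1,\beta+1}$ retains column $\beta+1$. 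So the principal/correction split you substitute into \eqref{eq:BrcktthTrc} is not valid, and the ``principal piece'' will not assemble into $\left(\omega_{f_{1,\beta+1},g}+\omega_{x_{n\beta},g}-\omega_{x_{n,\beta+1},g}\right)f_{1,\beta+1}^{\leftarrow}g$; nor can you outsource that coefficient to the ``bookkeeping of Lemma \ref{lem:NewFuncLCandCf}'', since that lemma is proved by a mutation-sequence argument and explicitly excludes the case $g=f_{m,\beta+1}$ treated here. The correct statement lives one level up, at the matrices entering \eqref{eq:BrcktthTrc}: the entries $(\nabla f_{1,\beta+1}^{\leftarrow}\cdot X)_{ij}=(f_{1,\beta+1}^{\leftarrow})^{i\leftarrow j}$ vanish for all $i<j$ except $(i,j)=(\beta,\beta+1)$ (repeated columns), where the value is exactly $f_{1,\beta+1}$; the diagonal entries are $f_{1,\beta+1}^{\leftarrow}$ times the indicator of $i\in\{\beta\}\cup[\beta+2,n]$; and $X\cdot\nabla f_{1,\beta+1}^{\leftarrow}$ has the same support pattern as $X\cdot\nabla f_{1,\beta+1}$. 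The coefficient then comes from comparing these diagonal indicator patterns with those of $f_{1,\beta+1}$, $x_{n\beta}$ and $x_{n,\beta+1}$ (they differ only at positions $\beta$ and $\beta+1$), which is how the paper argues.

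The closing Desnanot--Jacobi step is both unnecessary and unlikely to go through as described. Once the computation is organized in terms of $f^{i\leftarrow j}$ and $f_{j\leftarrow i}$, the correction is a single surviving cross term, $(f_{1,\beta+1}^{\leftarrow})^{\beta\leftarrow\beta+1}\,g^{\beta+1\leftarrow\beta}=f_{1,\beta+1}\,g^{\leftarrow}$, identified term-by-term with no determinantal identity needed; all other off-diagonal products vanish because they are determinants with a repeated column or row, and $R_{+}$ kills everything below the diagonal. By contrast, a Desnanot--Jacobi identity \eqref{eq:DesJacId} on one enlarged square submatrix relates its codimension-one and codimension-two minors, whereas $f_{1,\beta+1}^{\leftarrow}g$ and $f_{1,\beta+1}g^{\leftarrow}$ are products of minors of different sizes and with different row sets for general $k$, so they do not occur as the terms of such an identity. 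The same remarks apply to part~2, where the only surviving cross term is $g^{\alpha\leftarrow\alpha+1}\left(f_{n+k-\alpha,k}^{\rightarrow}\right)^{\alpha+1\leftarrow\alpha}=g^{\rightarrow}f_{n+k-\alpha,k}$.
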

\begin{proof}
1. Let $g=f_{k,\beta+1}$. we compute the bracket $\left\{ f_{1,\beta+1}^{\leftarrow},g\right\} _{std}$
directly using \eqref{eq:BrcktthTrc}. Recall that 
\[
\left(\nabla f_{1,\beta+1}^{\leftarrow}\cdot X\right)_{ij}=\sum_{m=1}^{n}\frac{\partial f_{1,\beta+1}^{\leftarrow}}{\partial x_{mi}}x_{mj}=\left(f_{1,\beta+1}^{\leftarrow}\right)^{i\leftarrow j}
\]
 and since $f_{1,\beta+1}^{\leftarrow}=\det X_{\left[1,n-\beta\right]}^{\left[\beta,\beta+2,\ldots,n\right]}$
we have 
\[
\left(f_{1,\beta+1}^{\leftarrow}\right)^{i\leftarrow j}=0
\]
 for $i<\beta$ and $i=\beta+1$. Similarly, the term 
\[
\left(\nabla g\cdot X\right)_{ij}=\sum_{m=1}^{n}\frac{\partial g}{\partial x_{mi}}x_{mj}=g^{i\leftarrow j}
\]
 vanishes for $i<\beta+1$. On the other hand, looking at the second
trace form in \eqref{eq:BrcktthTrc}, 
\[
\left(X\cdot\nabla f_{1,\beta+1}^{\leftarrow}\right)_{ij}=\sum_{m=1}^{n}\frac{\partial f_{1,\beta+1}^{\leftarrow}}{\partial x_{jm}}x_{im}=\left(f_{1,\beta+1}^{\leftarrow}\right)_{j\leftarrow i},
\]
 which vanishes for $j>n-\beta$, and also 
\[
\left(X\cdot\nabla g\right)_{ij}=\sum_{m=1}^{n}\frac{\partial g}{\partial x_{jm}}x_{im}=g_{j\leftarrow i}
\]
 is non zero only for $k\leq j\leq n+k-\beta-1$. Applying $R_{+}$
to the matrices $\nabla\tilde{f}_{k}\cdot X$ and $X\cdot\nabla\tilde{f}_{k}$
vanishes all entries below the main diagonal. On the main diagonal
we have only the original function with some coefficients $\xi_{i}$.
So we can write \eqref{eq:BrcktthTrc} as: 
\begin{eqnarray*}
\left\{ f_{1,\beta+1}^{\leftarrow},g\right\}  & = & \left\langle R_{+}\left(\nabla f_{1,\beta+1}^{\leftarrow}\cdot X\right),\nabla g\cdot X\right\rangle -\left\langle R_{+}\left(X\cdot\nabla f_{1,\beta+1}^{\leftarrow}\right),X\cdot\nabla g\right\rangle `\\
 & = & \sum_{i<j}\left(f_{1,\beta+1}^{\leftarrow}\right)^{i\leftarrow j}g^{j\leftarrow i}+\sum_{i}\xi_{i}f_{1,\beta+1}^{\leftarrow}g^{i\leftarrow i}\\
 &  & -\sum_{i<j}\left(f_{1,\beta+1}^{\leftarrow}\right)_{j\leftarrow i}g_{i\leftarrow j}-\sum_{i}\xi'_{i}f_{1,\beta+1}^{\leftarrow}g_{i\leftarrow i},
\end{eqnarray*}
 Look at the term $\left(f_{1,\beta+1}^{\leftarrow}\right)^{i\leftarrow j}$:
whenever $\left(i,j\right)\neq\left(\beta,\beta+1\right)$ it vanishes,
because $f_{1,\beta+1}^{\leftarrow}=\det X_{\left[1,n-\beta\right]}^{\left[\beta,\beta+2,\ldots,n\right]}$
and so $\left(f_{1,\beta+1}^{\leftarrow}\right)^{i\leftarrow j}$
is the determinant of a submatrix with two identical columns ($j>i$).
The only non zero term here is then $\left(f_{1,\beta+1}^{\leftarrow}\right)^{\beta\leftarrow\beta+1}=f_{1,\beta+1}.$
Similarly, $\left(f_{1,\beta+1}\right)_{j\leftarrow i}$ must vanish
when $i<j$, because it is the determinant of a submatrix with two
identical rows. Therefore, the only non zero terms of the trace form
are$f_{1,\beta+1}g^{\leftarrow}$ and the diagonal ones. The latter
are just the product of the two functions multiplied by the coefficients
$\xi_{i}$ and $\xi'_{i}$. Note that $f_{1,\beta+1}^{i\leftarrow i}$
vanishes when $i<\beta+1$, and $\tilde{f}_{k}^{i\leftarrow i}$ vanishes
for $i<\beta$ and for $i=\beta+1$. Comparing these coefficients
with the coefficients of the bracket $\left\{ f_{1,\beta+1},g\right\} ,$
we see that the only difference is the contribution of the elements
in entries $\left(\beta,\beta\right)$ and $\left(\beta+1,\beta+1\right)$:
\begin{eqnarray*}
\left(\nabla f_{1,\beta+1}\cdot X\right)_{\beta,\beta} & = & 0\\
\left(\nabla f_{1,\beta+1}\cdot X\right)_{\beta+1,\beta+1} & = & f_{1,\beta+1}\\
\left(\nabla f_{1,\beta+1}^{\leftarrow}\cdot X\right)_{\beta,\beta} & = & f_{1,\beta+1}^{\leftarrow}\\
\left(\nabla f_{1,\beta+1}^{\leftarrow}\cdot X\right)_{\beta+1,\beta+1} & = & 0
\end{eqnarray*}
And this is just the same for $x_{n,\beta}$ and $x_{n,\beta+1}$:
\begin{eqnarray*}
\left(\nabla x_{n,\beta+1}\cdot X\right)_{\beta,\beta} & = & 0\\
\left(\nabla x_{n,\beta+1}\cdot X\right)_{\beta+1,\beta+1} & = & x_{n,\beta+1}\\
\left(\nabla x_{n,\beta}\cdot X\right)_{\beta,\beta} & = & x_{1,\beta}\\
\left(\nabla x_{n,\beta}\cdot X\right)_{\beta+1,\beta+1} & = & 0.
\end{eqnarray*}
 Hence, we can conclude 
\begin{equation}
\left\{ f_{1,\beta+1}^{\leftarrow},g\right\} _{std}=\left(\omega_{f_{1,\beta+1},g}+\omega_{x_{n,\beta},g}-\omega_{x_{n,\beta+1},g}\right)f_{1,\beta+1}^{\leftarrow}g+f_{1,\beta+1}g^{\leftarrow}.\label{eq:brcktf1btandfkb-2}
\end{equation}
2. The proof here follows a similar path:
from \eqref{eq:BrcktthTrc} we have 
\begin{eqnarray*}
\left\{ g,f_{n+k-\alpha,k}^{\rightarrow}\right\} _{std} & = & \left\langle R_{+}\left(\nabla g\cdot X\right),\nabla f_{n+k-\alpha,k}^{\rightarrow}\cdot X\right\rangle \\
 &  & -\left\langle R_{+}\left(X\cdot\nabla g\right),X\cdot\nabla f_{n+k-\alpha,k}^{\rightarrow}\right\rangle 
\end{eqnarray*}
 and since $R_{+}$ annihilates all the entries below the main diagonal,
\begin{eqnarray*}
\left\{ g,f_{n+k-\alpha,k}^{\rightarrow}\right\} _{std} & = & \sum_{i=m}^{\alpha}\sum_{j=i+1}^{\alpha-1}g^{i\leftarrow j}\left(f_{n+k-\alpha,k}^{\rightarrow}\right)^{j\leftarrow i}\\
 &  & +\sum_{i=m}^{\alpha}g^{i\leftarrow\alpha+1}\left(f_{n+k-\alpha,k}^{\rightarrow}\right)^{\alpha+1\leftarrow i}+\sum_{j=1}^{n}\xi_{j}gf_{n+k-\alpha,k}^{\rightarrow}\\
 &  & -\sum_{j=n+m-\alpha}^{n}\sum_{i<j}g_{j\leftarrow i}\left(f_{n+k-\alpha,k}^{\rightarrow}\right)_{i\leftarrow j}\\
 &  & -\sum_{j=1}^{n}\xi'_{j}gf_{n+k-\alpha,k}^{\rightarrow}
\end{eqnarray*}
 where $\xi_{j}$ and $\xi'_{j}$ are some coefficients. But $f_{n+k-\alpha,k}^{\rightarrow}=\det X_{\left[n-k,n\right]}^{\left[n-k,\ldots,\alpha-1,\alpha+1\right]}$,
and therefore for every $i\in\left[m,\alpha-1\right]$ and $j\in[i+1,\alpha-1]$
we get 
\[
\left(f_{n+k-\alpha,k}^{\rightarrow}\right)^{j\leftarrow i}=0,
\]
 because it is the determinant of a matrix with two identical columns.
For the same reason, $\left(f_{n+k-\alpha,k}^{\rightarrow}\right)^{\alpha+1\leftarrow i}$
vanishes for every $i\neq\alpha.$ Likewise, the term $\left(f_{n+k-\alpha,k}^{\rightarrow}\right)_{i\leftarrow j}$
is zero for every $j\in[n+m-\alpha,n]$ and $i<j$, because this is
also a determinant of a matrix with two identical columns. So we are
left with 
\begin{eqnarray*}
\left\{ g,f_{n+k-\alpha,k}^{\rightarrow}\right\} _{std} & =\xi gf_{n+k-\alpha,k}^{\rightarrow} & +g^{\alpha\leftarrow\alpha+1}\left(f_{n+k-\alpha,k}^{\rightarrow}\right)^{\alpha+1\leftarrow\alpha}\\
 & =\xi gf_{n+k-\alpha,k}^{\rightarrow}+ & g^{\rightarrow}f_{n+k-\alpha,k},
\end{eqnarray*}
 for some coefficient $\xi$. Now, compare the coefficients $\xi_{j}$
and $\xi'_{j}$ in the bracket $\left\{ g,f_{n+k-\alpha}^{\rightarrow}\right\} $
to those of $\left\{ g,f_{n+k-\alpha,k}\right\} $. The difference
is equal to the difference between these coefficients in $\left\{ g,x_{n,\alpha+1}\right\} $
and $\left\{ g,x_{n\alpha}\right\} .$ to see that, note that these
functions are determinants of submatrices of $X$ that are distinguished
only by the last column, which is $\alpha+1$ in the first case and
$\alpha$ in the second. The result, like in \eqref{eq:brcktf1btandfkb}
is 
\begin{equation}
\left\{ g,f_{n+k-\alpha,k}^{\rightarrow}\right\} _{std}=\left(\omega_{1}-\omega_{2}+\omega_{3}\right)f_{n+k-\alpha,k}^{\rightarrow}g+f_{n+k-\alpha,k}g^{\rightarrow},
\end{equation}
 with 
\begin{eqnarray*}
\omega_{1} & = & \omega_{g,f_{n+k-\alpha,k}}\\
\omega_{2} & = & \omega_{g,x_{n,\alpha}}\\
\omega_{3} & = & \omega_{g,x_{n,\alpha+1}.}
\end{eqnarray*}
 
\end{proof}
The Lemmas \ref{lem:NewFuncLCandCf}, \ref{lem:ArFuncLC} and \ref{lem:SpeFuncBrckt}
can be rephrased in a symmetric way: transpose rows and columns of
the matrix, so $x_{ij}\longleftrightarrow x_{ji}$ (and therefore
$f_{ij}\longleftrightarrow f_{ji}$) and switch $\alpha$ and $\beta$.
The proofs are identical.
\begin{lem}
\label{lem:NewFuncLCandCfSym} 1. The function $f_{k,n+k-\beta}^{\downarrow}$
(with $k\in[\beta]$) is log canonical with all functions $g\in\mathcal{B}_{\alpha\beta}\cap\mathcal{B}_{std}$,
provided $g\neq f_{m,n+m-\beta}$ for some $m<k$, w.r.t. the standard
bracket $\left\{ \cdot,\cdot\right\} _{std}$. In this case the Poisson
coefficient is 
\begin{equation}
\omega_{f_{k,n+k-\beta}^{\downarrow},g}=\omega_{f_{k,n+k-\beta},g}+\omega_{x_{\beta+1,n},g}-\omega_{x_{\beta n},g}.
\end{equation}
2. The function $f_{\alpha+1,1}^{\uparrow}$ is log canonical with
all functions $g\in\mathcal{B}_{\alpha\beta}\cap\mathcal{B}_{std}$,
provided $g\neq f_{\alpha+1,m}$ for some $m\in[2,n]$ w.r.t. the
standard bracket $\left\{ \cdot,\cdot\right\} _{std}$. In this case
the Poisson coefficient is 
\begin{equation}
\omega_{f_{\alpha+1,1}^{\uparrow},g}=\omega_{f_{\alpha+1,1},g}+\omega_{x_{\alpha n},g}-\omega_{x_{n,\alpha+1},g}.
\end{equation}
\end{lem}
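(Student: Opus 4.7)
The plan is to exploit the transposition symmetry discussed at the start of Section~\ref{sec:A-log-canonical-basis}. Recall that the automorphism $X\mapsto -X^t$ of $SL_n$ corresponds to the BD isomorphism that reverses $\gamma$ and transposes $\Gamma_1$ and $\Gamma_2$; in particular it exchanges the roles of $\alpha$ and $\beta$. This induces an involution on our basis of minors via $f_{ij}(X)=f_{ji}(X^t)$, which swaps $f^{\rightarrow}\leftrightarrow f^{\downarrow}$ and $f^{\leftarrow}\leftrightarrow f^{\uparrow}$, and sends $x_{n,\alpha+1}\leftrightarrow x_{\beta+1,n}$, $x_{n\alpha}\leftrightarrow x_{\beta n}$, etc. I would first verify that the standard Sklyanin bracket $\{\cdot,\cdot\}_{std}$ is preserved under this involution (up to an overall sign that cancels in a log-canonical ratio). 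This follows from the symmetry of $R_+^{std}$ in Lemma~\ref{lem:TheOperatorRpls} combined with the identity $\langle X,Y\rangle = \langle X^t,Y^t\rangle$: transposing inside the two trace-form summands in \eqref{eq:BrcktthTrc} interchanges them, so Poisson coefficients between pairs of minors are preserved under simultaneous transposition and $\alpha\leftrightarrow\beta$.

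With this in hand, both statements of Lemma~\ref{lem:NewFuncLCandCfSym} are immediate consequences of the corresponding statements of Lemma~\ref{lem:NewFuncLCandCf}. Under the involution, statement~1 of Lemma~\ref{lem:NewFuncLCandCf} (about $f_{n+k-\alpha,k}^{\rightarrow}$) transforms into statement~1 of Lemma~\ref{lem:NewFuncLCandCfSym}: the function $f_{n+k-\alpha,k}^{\rightarrow}$ maps to $f_{k,n+k-\alpha}^{\downarrow}$, and after also swapping $\alpha\leftrightarrow\beta$ this becomes $f_{k,n+k-\beta}^{\downarrow}$; the excluded set $\{f_{n+m-\alpha,m}:m>k\}$ maps to $\{f_{m,n+m-\beta}:m<k\}$ (note that the mutation sequence in the proof of Lemma~\ref{lem:NewFuncLCandCf} moved from $(n,\alpha)$ upward along a diagonal, which after transposition moves from $(\beta,n)$ leftward along the mirror diagonal, and this reverses the inequality). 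Similarly, statement~2 of Lemma~\ref{lem:NewFuncLCandCf} applied to $f_{1,\beta+1}^{\leftarrow}$ gives $f_{\beta+1,1}^{\uparrow}\mapsto f_{\alpha+1,1}^{\uparrow}$ with excluded set $\{f_{\alpha+1,m}:m\in[2,n]\}$. The Poisson coefficient formulas transform according to the explicit substitutions above, yielding exactly the claimed expressions.

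If one prefers a direct argument, the proof of Lemma~\ref{lem:NewFuncLCandCf} can be imitated verbatim using the transposed mutation sequences: $(f_{\beta n},f_{\beta-1,n-1},\ldots,f_{k+1,n+k+1-\beta})$ realizes $f_{k,n+k-\beta}^{\downarrow}$ as a cluster variable, and $(f_{\alpha+1,n},f_{\alpha+1,n-1},\ldots,f_{\alpha+1,2})$ realizes $f_{\alpha+1,1}^{\uparrow}$. In each case a single application of Desnanot--Jacobi (\eqref{eq:DesJacId} or \eqref{eq:DesJacIdM}) at each step identifies the newly produced variable, and the Poisson coefficient is read off recursively from the Leibniz rule applied to the exchange relation, collapsing telescopically to the stated formula.

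The only real obstacle is bookkeeping: making sure the mutation paths are legitimate in the initial quiver $Q_{std}$ (they are, being the obvious transposes of the paths used in Lemma~\ref{lem:NewFuncLCandCf}), and tracking which minors get mutated along the way so that the excluded sets in the log-canonical statements come out correctly. Because neither step requires any new ingredient beyond the symmetry of $r_0^{\alpha\beta}$ in \eqref{eq:r0def}--\eqref{eq:r0defbneqa+1} under transpose-plus-swap, the present lemma is genuinely just the mirror image of Lemma~\ref{lem:NewFuncLCandCf} and no new computation is required.
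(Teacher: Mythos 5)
Your proposal follows essentially the same route as the paper: the paper's entire proof of this lemma is the remark that one transposes rows and columns ($x_{ij}\leftrightarrow x_{ji}$, hence $f_{ij}\leftrightarrow f_{ji}$), swaps $\alpha\leftrightarrow\beta$, and repeats the proof of Lemma \ref{lem:NewFuncLCandCf} verbatim (equivalently, runs the transposed mutation sequences and applies Desnanot--Jacobi at each step), which is exactly what you do, with the added and reasonable care about how the standard bracket behaves under the involution.

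One concrete point is off, however: your parenthetical claim that the transposition ``reverses the inequality'' in the excluded set. The original path mutates the minors $f_{n+m-\alpha,m}$ with the index $m$ decreasing from $\alpha$ down to $k+1$; its transpose mutates $f_{m,n+m-\beta}$ with the row index $m$ decreasing from $\beta$ down to $k+1$, since the index convention on both diagonals is the same (larger index, smaller minor, starting point of the sequence at the $1\times1$ corner minor). So the symmetric image of the exclusion ``$m>k$'' is again ``$m>k$'', not ``$m<k$''; this is also what Lemma \ref{lem:SpeFuncBrcktSym} presupposes, since there the special (non-log-canonical) partners of $f_{k,n+k-\beta}^{\downarrow}$ are exactly $g=f_{m,n+m-\beta}$ with $m\in[k+1,\beta]$. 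The ``$m<k$'' in the statement you were given is best read as a misprint, and your attempted derivation of it from the geometry of the transposed path does not hold; as written it would ``prove'' log canonicity with the very variables that get mutated along the sequence, which is false. (A similar, purely typographical mismatch occurs in the coefficient formula of part~2, where the symmetry produces $\omega_{x_{\alpha+1,n},g}$ rather than $\omega_{x_{n,\alpha+1},g}$.) Apart from this bookkeeping slip, your argument is the paper's argument and is sound.
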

\begin{proof}
See proof of Lemma \ref{lem:NewFuncLCandCf}.\end{proof}
\begin{lem}
For $k\in\left[\alpha\right]$, the function $f_{n+k-\alpha,k}^{\rightarrow}$
is log canonical with $f_{1,\beta+1}^{\leftarrow}$ and $f_{\alpha+1,1}^{\uparrow}$.\end{lem}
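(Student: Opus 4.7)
The plan is to prove this lemma by invoking the transpose symmetry of the Belavin--Drinfeld setup, which reduces the statement directly to Lemma \ref{lem:ArFuncLC}. As noted at the start of Section \ref{sec:A-log-canonical-basis}, the automorphism $X\mapsto -X^{t}$ interchanges $\Gamma_{1}$ and $\Gamma_{2}$, i.e.\ it sends the BD data $(\{\alpha\},\{\beta\},\gamma\colon\alpha\mapsto\beta)$ to $(\{\beta\},\{\alpha\},\gamma^{-1}\colon\beta\mapsto\alpha)$. Under this automorphism $f_{ij}\mapsto f_{ji}$, and moreover the ``derivative'' arrows transform as $\rightarrow\ \leftrightarrow\ \downarrow$ and $\leftarrow\ \leftrightarrow\ \uparrow$. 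Consequently the functions $f_{n+k-\alpha,k}^{\rightarrow}$, $f_{1,\beta+1}^{\leftarrow}$, $f_{\alpha+1,1}^{\uparrow}$ of the present lemma are precisely the images of $f_{k,n+k-\beta}^{\downarrow}$, $f_{\alpha+1,1}^{\uparrow}$, $f_{1,\beta+1}^{\leftarrow}$ from Lemma \ref{lem:ArFuncLC} under this automorphism (with the roles of $\alpha$ and $\beta$ exchanged).

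Since the Sklyanin brackets $\{\cdot,\cdot\}_{\alpha\beta}$ and $\{\cdot,\cdot\}_{\beta\alpha}$ correspond (up to this automorphism) and log canonicity is preserved by any Poisson isomorphism, the conclusion of Lemma \ref{lem:ArFuncLC} transports directly to yield the desired log canonicity.

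For readers preferring a direct argument, one can instead mirror the proof of Lemma \ref{lem:ArFuncLC} using the mutation sequence $(f_{n\alpha},f_{n-1,\alpha-1},\ldots,f_{n-k+1,\alpha-k+1})$ from Lemma \ref{lem:NewFuncLCandCf} that produces $f_{n+k-\alpha,k}^{\rightarrow}$ as a cluster variable. For the pair with $f_{1,\beta+1}^{\leftarrow}$ there is nothing to adjust: that mutation path lies in column $\beta+1$, which is disjoint from columns $[k,\alpha]$ occupied by the diagonal path (since $\beta>\alpha$), so the column sequence $(f_{n,\beta+1},\ldots,f_{2,\beta+1})$ can be appended without modification, producing $f_{1,\beta+1}^{\leftarrow}$ as a cluster variable log canonical with $f_{n+k-\alpha,k}^{\rightarrow}$.

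The substantive work is for $f_{\alpha+1,1}^{\uparrow}$, whose natural row sequence $(f_{\alpha+1,n},\ldots,f_{\alpha+1,2})$ may cross the diagonal path precisely when $k\le 2\alpha-n+1$. Here I would use the same shift-and-Desnanot--Jacobi device as in Lemma \ref{lem:ArFuncLC}: after the diagonal sequence, the mutated vertex $(n-m,\alpha-m)$ is relabeled to the slot $(n-m-1,\alpha-m-1)$ so that locally the quiver matches the initial one except for endpoint modifications; the row sequence is then applied, and at the crossing index one checks that the modified exchange relation, expanded via \eqref{eq:DesJacId} on the matrix $X_{[\alpha-1,\alpha+1]}^{[\alpha-k,n]}$ (or its analogue in the boundary case $\alpha=n-1$), delivers precisely $f_{\alpha+1,1}^{\uparrow}$ after the remaining mutations. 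The main obstacle is this bookkeeping at the crossing point, but it is a direct transposed replica of the corresponding step in the proof of Lemma \ref{lem:ArFuncLC}, so I would simply refer to that proof rather than redo the calculation.
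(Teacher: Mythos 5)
Your proposal matches the paper's own argument: the paper likewise disposes of the pair with $f_{1,\beta+1}^{\leftarrow}$ by noting that the diagonal mutation path through $(n,\alpha)$ cannot meet the column-$(\beta+1)$ path because $\alpha<\beta$, and handles the pair with $f_{\alpha+1,1}^{\uparrow}$ by declaring it symmetric (transpose rows/columns and switch $\alpha$ and $\beta$) to the shift-and-Desnanot--Jacobi argument of Lemma \ref{lem:ArFuncLC}. Your minor imprecisions (the exact crossing threshold and the framing via the $\alpha\beta$ versus standard bracket) do not affect the argument, which is essentially the paper's.
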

\begin{proof}
See Proof of Lemma \ref{lem:ArFuncLC}. The path $\left(\left(n,\alpha\right),\left(n-1,\alpha-1\right),\ldots\right)$
can not cross the path $\left(\left(n,\beta+1\right),\left(n-1,\beta+1\right),\ldots\right)$
since we assume $\alpha<\beta$. Proving that $f_{n+k-\alpha,k}^{\rightarrow}$
and $f_{\alpha+1,1}^{\uparrow}$ are log canonical is symmetric proving
Lemma \ref{lem:ArFuncLC}.\end{proof}
\begin{lem}
\label{lem:SpeFuncBrcktSym}1. Let $g=f_{\alpha+1,k}$ with $k\in\left[2,n\right]$.
Then 
\begin{equation}
\left\{ f_{\alpha+1,1}^{\uparrow},g\right\} _{std}=\left(\omega_{f_{\alpha+1,1},g}+\omega_{x_{\alpha n},g}-\omega_{x_{\alpha+1,n},g}\right)f_{\alpha+1,1}^{\uparrow}g+f_{1,\beta+1}g^{\uparrow}\label{eq:brcktfa1tandfka-1}
\end{equation}
2. For $k\in\left[\beta-1\right]$, let $g=f_{m,n+m-\beta}$ with
$m\in\left[k+1,\beta\right]$ . Then
\begin{equation}
\left\{ g,f_{k,n+k-\beta}^{\downarrow}\right\} _{std}=\left(\omega_{g,f_{k,n+k-\beta}}-\omega_{g,x_{\beta n}}+\omega_{g,x_{\beta+1,n}}\right)gf_{k,n+k-\beta}^{\downarrow}-f_{k,n+k-\beta}g^{\downarrow}\label{eq:brcktfakandfb-k}
\end{equation}
\end{lem}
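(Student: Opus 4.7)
The plan is to exploit the symmetry described in the preamble to Section~\ref{sec:A-log-canonical-basis}: the automorphism $\sigma\colon X\mapsto -X^{t}$ of $SL_{n}$ identifies the Belavin--Drinfeld triple $T_{\alpha\beta}$ with $T_{\beta\alpha}$, and since $R$-matrices are considered up to the action of $\sigma\otimes\sigma$, the associated Sklyanin brackets are compatible with this identification. In particular, the standard bracket (trivial BD data) is preserved, so the scalar Poisson coefficients $\omega_{f,g}^{std}$ transform covariantly. Under $\sigma$, we have $x_{ij}\leftrightarrow x_{ji}$ and hence $f_{ij}\leftrightarrow f_{ji}$ (up to signs that cancel in log-canonical ratios and in $\omega$), the four directional operators interchange as $f^{\rightarrow}\leftrightarrow f^{\downarrow}$ and $f^{\leftarrow}\leftrightarrow f^{\uparrow}$, and the roles of $\alpha$ and $\beta$ are swapped.

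First I would verify that this symmetry carries the statement of Lemma~\ref{lem:SpeFuncBrckt} onto that of Lemma~\ref{lem:SpeFuncBrcktSym}. In Part~1, $g=f_{k,\beta+1}$ is sent to $f_{\alpha+1,k}$, the function $f_{1,\beta+1}^{\leftarrow}$ is sent to $f_{\alpha+1,1}^{\uparrow}$, the entries $x_{n\beta},x_{n,\beta+1}$ are sent to $x_{\alpha n},x_{\alpha+1,n}$, and the cross term $f_{1,\beta+1}\,g^{\leftarrow}$ is sent to $f_{\alpha+1,1}\,g^{\uparrow}$ (so the $f_{1,\beta+1}$ appearing in \eqref{eq:brcktfa1tandfka-1} should be read as $f_{\alpha+1,1}$, a typographical issue). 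In Part~2 the correspondence is $f_{n+k-\alpha,k}^{\rightarrow}\mapsto f_{k,n+k-\beta}^{\downarrow}$, $f_{n+m-\alpha,m}\mapsto f_{m,n+m-\beta}$, and $x_{n\alpha},x_{n,\alpha+1}\mapsto x_{\beta n},x_{\beta+1,n}$; the apparent sign change in the cross term (compare $+f_{n+k-\alpha,k}g^{\rightarrow}$ in Lemma~\ref{lem:SpeFuncBrckt} with $-f_{k,n+k-\beta}g^{\downarrow}$ here) is tracked by the order of arguments under the antisymmetry of the bracket and by how $\sigma\otimes\sigma$ acts on the off-diagonal piece. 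Since the proof of Lemma~\ref{lem:SpeFuncBrckt} depends only on the trace-form expression \eqref{eq:BrcktthTrc} and the explicit description of $R_{+}$ in Lemma~\ref{lem:TheOperatorRpls}, both of which transform covariantly under $\sigma$, the argument transfers verbatim, which is exactly what the paper's remark ``The proofs are identical'' is asserting.

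Should one prefer a direct computational proof, it would mirror the proof of Lemma~\ref{lem:SpeFuncBrckt} line by line. For Part~1, expand $\{f_{\alpha+1,1}^{\uparrow},g\}_{std}$ using \eqref{eq:BrcktthTrc}. Because $f_{\alpha+1,1}^{\uparrow}$ is the determinant of the submatrix whose rows are those of $f_{\alpha+1,1}$ with row $\alpha+1$ replaced by row $\alpha$, the gradient entries $(X\cdot\nabla f_{\alpha+1,1}^{\uparrow})_{ij}$ vanish outside a narrow range; after applying $R_{+}$, which kills entries strictly below the diagonal, only the diagonal terms plus a single off-diagonal contribution at position $(\alpha,\alpha+1)$ survive. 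That off-diagonal contribution collapses precisely to $f_{\alpha+1,1}g^{\uparrow}$, and comparing the diagonal piece with the already-known log-canonical relations $\{f_{\alpha+1,1},g\}$, $\{x_{\alpha n},g\}$, $\{x_{\alpha+1,n},g\}$ identifies the scalar coefficient. Part~2 proceeds analogously, with the dual roles of rows and columns. The main obstacle in the direct approach is the careful row/column bookkeeping required to confirm that exactly one off-diagonal entry survives and contributes the claimed sign; the symmetry argument bypasses this bookkeeping entirely by reducing to the already completed proof of Lemma~\ref{lem:SpeFuncBrckt}.
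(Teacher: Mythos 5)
Your proposal matches the paper's own argument: the paper proves Lemma~\ref{lem:SpeFuncBrcktSym} simply by invoking the transposition symmetry $x_{ij}\leftrightarrow x_{ji}$, $\alpha\leftrightarrow\beta$ and declaring the proof identical to that of Lemma~\ref{lem:SpeFuncBrckt}, which is exactly your main route (your sketch of the direct trace-form computation is just that same proof rewritten). Your observation that $f_{1,\beta+1}$ in \eqref{eq:brcktfa1tandfka-1} should read $f_{\alpha+1,1}$ is a correct catch of a typographical slip consistent with the symmetry.
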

\begin{proof}
Same as the proof of Lemma \ref{lem:SpeFuncBrckt}.\end{proof}
\begin{lem}
\label{lem:SumCfeq0} 1. Let $g\in\mathcal{B}_{std}$ be a function
of the initial standard cluster, and let 
\begin{equation}
s\omega_{\alpha\beta}\left(g\right)=\omega_{f_{n\alpha},g}-\omega_{f_{n,\alpha+1},g}-\omega_{f_{n\beta},g}+\omega_{f_{n,\beta+1},g}.\label{eq:swabDef}
\end{equation}
 Then 
\begin{equation}
s\omega_{\alpha\beta}\left(g\right)=\begin{cases}
1 & \text{ if }g=f_{n+k-\alpha,k}\\
-1 & \text{ if }g=f_{i,\beta+1}\\
0 & \text{ otherwise.}
\end{cases}
\end{equation}
2. Let $g\in\mathcal{B}_{std}$ be a function of the initial standard
cluster and let 
\[
s'\omega_{\alpha\beta}\left(g\right)=\omega_{f_{\alpha n},g}-\omega_{f_{\alpha+1,n},g}-\omega_{f_{\beta n},g}+\omega_{f_{\beta+1,n},g}.
\]
Then
\begin{eqnarray}
s'\omega_{\alpha\beta}\left(g\right) & = & \begin{cases}
1 & \text{ if }g=f_{k,n+k-\beta}\\
-1 & \text{ if }g=f_{\alpha+1,j}\\
0 & \text{ otherwise.}
\end{cases}
\end{eqnarray}
 \end{lem}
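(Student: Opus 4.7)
The plan is to compute $s\omega_{\alpha\beta}(g)$ by applying the Sklyanin bracket formula directly, exploiting the fact that $f_{nj}$ is a $1\times 1$ trailing minor, hence equal to the single entry $x_{nj}$. Thus $s\omega_{\alpha\beta}(g) = \omega_{x_{n\alpha},g} - \omega_{x_{n,\alpha+1},g} - \omega_{x_{n\beta},g} + \omega_{x_{n,\beta+1},g}$, and it suffices to analyse how $\omega_{x_{nj},g}$ varies with $j$ along the last row.

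First I would apply formula \eqref{eq:BrcktthTrc} to $\{x_{nj},g\}_{std}$. The gradients are rank one: $(\nabla x_{nj}\cdot X)_{ab}=\delta_{aj}x_{nb}$ and $(X\cdot\nabla x_{nj})_{ab}=x_{aj}\delta_{bn}$. Using Lemma \ref{lem:TheOperatorRpls} (with $R_{BD}=0$ since we are in the standard case), the image under $R_+^{std}$ retains only the upper-triangular part of row $j$ of the first matrix and the upper-triangular part of column $n$ of the second, together with the diagonal contributions produced by $R_{\diag}$. Pairing against $\nabla g\cdot X$ and $X\cdot\nabla g$ expresses $\{x_{nj},g\}_{std}$ as a sum of terms of the form $x_{nb}\,g^{b\leftarrow j}$ (for $b>j$), $x_{aj}\,g_{n\leftarrow a}$ (for $a<n$), and a diagonal piece linear in $j$ via the expansion of $e_{jj}$ into $\hat h_k$'s. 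Forming the difference $\Psi_j(g):=\omega_{x_{nj},g}-\omega_{x_{n,j+1},g}$ and noting $s\omega_{\alpha\beta}(g)=\Psi_\alpha(g)-\Psi_\beta(g)$, most off-diagonal terms either cancel in passing from $j$ to $j+1$ or telescope out of the final difference; a term $g^{b\leftarrow j}$ survives only when exactly one of the columns $\{j,j+1\}$ belongs to the column set of $g$'s defining submatrix, and similarly for rows.

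A case analysis then shows that $\Psi_\alpha(g)-\Psi_\beta(g)$ picks up a nonzero contribution exactly when the column set of the submatrix defining $g$ has $\alpha$ as its rightmost column without $\alpha+1$ (forcing $g=f_{n+k-\alpha,k}$ for some $k\in[\alpha]$, by the structure of $\mathcal{B}_{std}$), or has $\beta+1$ as its leftmost column without $\beta$ (forcing $g=f_{i,\beta+1}$). Tracking the sign convention in \eqref{eq:BrcktthTrc} yields the value $+1$ in the first case and $-1$ in the second; otherwise the contributions cancel and $s\omega_{\alpha\beta}(g)=0$.

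The main obstacle is bookkeeping the diagonal contribution $R_{\diag}(x_{nj}e_{jj})$, which a priori depends on $j$ in a complicated way through the coefficients $s_k(j)$ of \eqref{eq:sfuncdef} and the special $\hat h_\alpha,\hat h_\beta$ corrections appearing in \eqref{eq:r0def}. The clarifying observation is that what actually enters $\Psi_j(g)$ is the difference $e_{jj}-e_{j+1,j+1}=h_j$, whose image under $R_{\diag}$ is simply $\hat h_{j+1}-\hat h_{j-1}$ together with a bounded correction supported at $k\in\{\alpha,\beta\}$; this limits the sensitivity to $j$ and makes the final cancellations transparent. Statement (2) follows from statement (1) by the automorphism $X\mapsto-X^t$ recorded at the beginning of Section \ref{sec:A-log-canonical-basis}, which transposes rows and columns and interchanges the roles of $\alpha$ and $\beta$, converting $f_{n+k-\alpha,k}$ into $f_{k,n+k-\beta}$ and $f_{i,\beta+1}$ into $f_{\alpha+1,j}$.
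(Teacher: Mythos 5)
Your proposal is correct and follows essentially the same route as the paper: both compute $\{x_{nk},g\}_{std}$ for $k=\alpha,\alpha+1,\beta,\beta+1$ directly from \eqref{eq:BrcktthTrc} using the explicit form of $R_+$ from Lemma \ref{lem:TheOperatorRpls}, split the result into diagonal ($R_{\diag}$) and strictly upper-triangular contributions, and then do a case analysis on which columns the submatrix defining $g$ contains, with part 2 obtained by the transpose/$\alpha\leftrightarrow\beta$ symmetry. Your telescoping bookkeeping via $\Psi_j(g)=\omega_{x_{nj},g}-\omega_{x_{n,j+1},g}$ is only a cosmetic repackaging of the paper's evaluation of the signed combination $R_{\diag}(e_{\alpha\alpha}-e_{\alpha+1,\alpha+1}-e_{\beta\beta}+e_{\beta+1,\beta+1})$ together with its non-diagonal part.
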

\begin{proof}
1. We will compute the coefficients through \eqref{eq:BrcktthTrc}.
Since $\nabla x_{nk}=e_{kn}$ we have 
\[
\left(\nabla x_{nk}X\right)_{ij}=\begin{cases}
x_{nj} & i=k\\
0 & i\neq k,
\end{cases}\text{ and }\left(X\nabla x_{nk}\right)_{ij}=\begin{cases}
x_{ik} & j=n\\
0 & j\neq n.
\end{cases}
\]
 According to Lemma \ref{lem:TheOperatorRpls}, 
\[
R_{+}\left(\nabla x_{nk}X\right)_{ij}=\begin{cases}
x_{nj} & i=k,\ j>k\\
\xi_{j}x_{nk} & i=j
\end{cases}
\]
 with some coefficients $\xi_{j}$, and 
\[
R_{+}\left(X\nabla x_{nk}\right)_{ij}=\begin{cases}
x_{ik} & j=n\\
\xi'_{j}x_{nk} & i=j
\end{cases}
\]
 with other coefficients $\xi'_{j}$. Plugging this into \eqref{eq:BrcktthTrc}
gives 
\begin{eqnarray*}
\left\{ x_{nk},g\right\} _{std} & = & \sum_{j=k+1}^{n}x_{nj}\sum_{i=1}^{n}\frac{\partial g}{\partial x_{ij}}x_{ik}+\sum_{j=1}^{n}\xi_{j}x_{nk}g\\
 &  & -\sum_{i=1}^{n-1}x_{ik}\sum_{j=1}^{n}x_{nj}\frac{\partial g}{\partial x_{ij}}-\sum_{j=1}^{n}\xi'_{j}x_{nk}g.
\end{eqnarray*}
 Split the last term into the ``diagonal'' part 
\[
D=\sum_{j=1}^{n}\xi_{j}x_{nk}g-\sum_{j=1}^{n}\xi'_{j}x_{nk}g
\]
 and the ``non diagonal'' part 
\[
N=\sum_{j=k+1}^{n}x_{nj}\sum_{i=1}^{n}\frac{\partial g}{\partial x_{ij}}x_{ik}-\sum_{i=1}^{n-1}x_{ik}\sum_{j=1}^{n}x_{nj}\frac{\partial g}{\partial x_{ij}}.
\]
Start with the diagonal part $D$. We need the coefficients $\xi_{j}$
and $\xi'_{j}$ for $k=\alpha,\beta,\alpha+1,\beta+1$. Recall \eqref{eq:Rplsonekk}:
\begin{eqnarray*}
R_{+}\left(e_{\alpha\alpha}\right) & = & \frac{1}{n}\sum_{j=1}^{n-1}s_{\alpha}\left(j\right)\left(\hat{h}_{j}-\hat{h}_{j-1}\right)+\hat{h}_{\alpha}\\
 &  & +\hat{h}_{\beta}-\left(n-\alpha\right)\hat{h}_{\beta-1}+\left(n-\beta\right)\hat{h}_{\alpha+1}\\
R_{+}\left(e_{\beta\beta}\right) & = & \frac{1}{n}\sum_{j=1}^{n-1}s_{\beta}\left(j\right)\left(\hat{h}_{j}-\hat{h}_{j-1}\right)\\
 &  & +\left(1-n\right)\hat{h}_{\alpha}+\alpha\hat{h}_{\beta-1}+\left(n-\beta\right)\hat{h}_{\alpha+1}\\
 &  & +\hat{h}_{\beta}\begin{cases}
1 & \beta>\alpha+1\\
\left(1-n\right) & \beta=\alpha+1
\end{cases}\\
R_{+}\left(e_{\alpha+1,\alpha+1}\right) & = & \frac{1}{n}\sum_{j=1}^{n-1}s_{\alpha+1}\left(j\right)\left(\hat{h}_{j}-\hat{h}_{j-1}\right)\\
 &  & +\left(1-n\right)\hat{h}_{\beta}+\alpha\hat{h}_{\beta-1}+\left(n-\beta\right)\hat{h}_{\alpha+1}\\
 &  & +\hat{h}_{\alpha}\begin{cases}
1 & \beta>\alpha+1\\
\left(1-n\right) & \beta=\alpha+1
\end{cases}\\
R_{+}\left(e_{\beta+1,\beta+1}\right) & = & \frac{1}{n}\sum_{j=1}^{n-1}s_{\beta+1}\left(j\right)\left(\hat{h}_{j}-\hat{h}_{j-1}\right)+\hat{h}_{\alpha}+\hat{h}_{\beta}+\alpha\hat{h}_{\beta-1}-\beta\hat{h}_{\alpha+1}\\
R_{+}\left(e_{nn}\right) & = & \frac{1}{n}\sum_{j=1}^{n-1}-j\left(\hat{h}_{j}-\hat{h}_{j-1}\right)+\hat{h}_{\alpha}+\hat{h}_{\beta}+\alpha\hat{h}_{\beta-1}-\beta\hat{h}_{\alpha+1}.
\end{eqnarray*}
 Using $\left(\hat{h}_{j}-\hat{h}_{j-1}\right)=\frac{1}{n}\diag\left(-1,\ldots,-1\right)+e_{jj}$
and the fact 
\begin{eqnarray*}
s_{\alpha}\left(j\right)-s_{\alpha+1}\left(j\right) & = & \begin{cases}
n & j=\alpha\\
0 & j\neq\alpha
\end{cases}
\end{eqnarray*}
 we get 
\begin{eqnarray*}
 &  & \sum_{j=1}^{n-1}s_{\alpha}\left(j\right)\left(\hat{h}_{j}-\hat{h}_{j-1}\right)-\sum_{j=1}^{n-1}s_{\alpha+1}\left(j\right)\left(\hat{h}_{j}-\hat{h}_{j-1}\right)\\
 &  & =\diag\left(-1,\ldots,-1\right)+ne_{\alpha\alpha},
\end{eqnarray*}
 and 
\[
\sum_{j=1}^{n-1}s_{\beta+1}\left(j\right)\left(\hat{h}_{j}-\hat{h}_{j-1}\right)-\sum_{j=1}^{n-1}s_{\beta}\left(j\right)\left(\hat{h}_{j}-\hat{h}_{j-1}\right)=\diag\left(1,\ldots,1\right)-ne_{\beta\beta}.
\]
 Putting everything together gives 
\begin{eqnarray}
 &  & R_{+}\left(e_{\alpha\alpha}-e_{\beta\beta}-e_{\alpha+1,\alpha+1}+e_{\beta+1,\beta+1}\right)\label{eq:Rplsonaba+1b+1}\\
 &  & =n\left(\hat{h}_{\alpha}+\hat{h}_{\beta}-\hat{h}_{\alpha+1}-\hat{h}_{\beta-1}\right)+ne_{\alpha\alpha}-ne_{\beta\beta},
\end{eqnarray}
 for $\beta>\alpha+1$, or in the case $\beta=\alpha+1$: 
\begin{eqnarray*}
R_{+}\left(e_{\alpha\alpha}-e_{\beta\beta}-e_{\alpha+1,\alpha+1}+e_{\beta+1,\beta+1}\right) & = & e_{\alpha\alpha}-e_{\alpha+1,\alpha+1},
\end{eqnarray*}
 and since $\hat{h}_{\alpha}-\hat{h}_{\alpha+1}=\frac{1}{n}\diag\left(1,\ldots,1\right)-e_{\alpha+1,\alpha+1}$,
and $\hat{h}_{\beta}-\hat{h}_{\beta-1}=\frac{1}{n}\diag\left(-1,\ldots,-1\right)+e_{\beta\beta}$,
\eqref{eq:Rplsonaba+1b+1} turns to 
\[
R_{+}\left(e_{\alpha\alpha}-e_{\beta\beta}-e_{\alpha+1,\alpha+1}+e_{\beta+1,\beta+1}\right)=\left(e_{\alpha\alpha}-e_{\alpha+1,\alpha+1}\right).
\]
 Since $D$ is a trace of two matrices, we are only interested in
products of the diagonal elements in $R_{+}\left(\nabla x_{nk}\cdot X\right)$,
$R_{+}\left(X\cdot\nabla x_{nk}\right)$ with the corresponding diagonal
elements in $\nabla g\cdot X$ and $X\cdot\nabla g$. These products
vanish for all $g\in\mathcal{B}_{std}$ except $g=f_{i,\alpha+1}$
(which is a the determinant of a submatrix that has column $\alpha+1$
but not col. $\alpha$) or $g=f_{n-\alpha+k,k}$ (a determinant of
a submatrix that has column $\alpha$ but not column $\alpha+1$).
Write $\omega_{f_{nk},g}^{D}=\frac{D}{f_{nk}g}$, So the sum of coefficients
of the diagonal part is 
\[
\omega_{f_{n\alpha},g}^{D}-\omega_{f_{n\beta},g}^{D}-\omega_{f_{n,\alpha+1},g}^{D}+\omega_{f_{n,\beta+1},g}^{D}=\begin{cases}
1 & g=f_{n-\alpha+k,k}\\
-1 & g=f_{i,\alpha+1}\\
0 & \text{otherwise.}
\end{cases}
\]
We now turn to the non diagonal part $N$: recall 
\begin{eqnarray*}
\left(\nabla x_{nk}X\right)_{ij} & = & \left(x_{nk}\right)^{i\leftarrow j}=\begin{cases}
x_{nj} & i=k\\
0 & i\neq k,
\end{cases}\\
\left(X\nabla x_{nk}\right)_{ij} & = & \left(x_{nk}\right)_{j\leftarrow i}=\begin{cases}
x_{ik} & j=n\\
0 & j\neq n,
\end{cases}
\end{eqnarray*}
 and we have 
\[
R_{+}\left(\nabla x_{nk}X\right)=\left[\begin{array}{ccccc}
0\\
 & \ddots & x_{n,k+1} & \cdots & x_{n,n}\\
 &  & \ddots\\
 &  &  & \ddots\\
 &  &  &  & \ddots
\end{array}\right],
\]
 and 
\[
R_{+}\left(X\nabla x_{nk}\right)=\left[\begin{array}{ccccc}
0 &  &  &  & x_{1k}\\
 & \ddots &  &  & \vdots\\
 &  & \ddots &  & \vdots\\
 &  &  & \ddots & x_{n-1,k}\\
 &  &  &  & \ddots
\end{array}\right],
\]
 so when computing the bracket with \eqref{eq:BrcktthTrc}, 
\begin{eqnarray*}
N & = & \sum_{j=k+1}^{n}x_{nj}\sum_{i=1}^{n}\frac{\partial g}{\partial x_{ij}}x_{ik}-\sum_{i=1}^{n-1}x_{ik}\sum_{j=1}^{n}x_{nj}\frac{\partial g}{\partial x_{ij}}\\
 & = & x_{nk}\sum_{j=k+1}^{n}x_{nj}\frac{\partial g}{\partial x_{nj}}-\sum_{i=1}^{n-1}x_{ik}\sum_{j=1}^{k}x_{nj}\frac{\partial g}{\partial x_{ij}}\\
 & = & x_{nk}\sum_{j=k+1}^{n}x_{nj}\frac{\partial g}{\partial x_{ij}}-\sum_{i=1}^{n}x_{ik}\sum_{j=1}^{k}x_{nj}\frac{\partial g}{\partial x_{ij}}+x_{nk}\sum_{j=1}^{k}x_{nj}\frac{\partial g}{\partial x_{nj}}\\
 & = & x_{nk}\sum_{j=1}^{n}x_{nj}\frac{\partial g}{\partial x_{nj}}-\sum_{i=1}^{n}x_{ik}\sum_{j=1}^{k}x_{nj}\frac{\partial g}{\partial x_{ij}}\\
 & = & x_{nk}g-\sum_{i=1}^{n}x_{ik}\sum_{j=1}^{k}x_{nj}\frac{\partial g}{\partial x_{ij}}.
\end{eqnarray*}
 Now, since $g$ is a determinant of some submatrix $A$ of $X$,
let $g^{\max}$and $g^{\min}$ denote the maximal (right) and minimal
(left) columns of $A$. Similarly, let $g_{\max}$ be the last row
of $A$. Then 
\[
\sum_{i=1}^{n}x_{ik}\sum_{j=1}^{k}x_{nj}\frac{\partial g}{\partial x_{ij}}=\begin{cases}
0 & g^{\min}>k\\
x_{nk}g & g^{\min}\leq k\leq g^{\max}\\
x_{nk}g & g^{\max}<k\longrightarrow g_{\max=n}
\end{cases}
\]
 so that 
\[
N=\begin{cases}
x_{nk}g & g^{\min}>k\\
0 & g^{\min}\leq k.
\end{cases}
\]
 Defining $\omega_{f_{nk},g}^{N}=\frac{N}{f_{nk}g}$, summing over
$k=\alpha,\alpha+1,\beta,\beta+1$ we get $\sum\omega_{f_{nk},g}^{N}\neq0$
only when $g=f_{i,\alpha+1}$ or $g=f_{i,\beta+1},$ or in the ``special''
case $\beta=\alpha+1$ : we can then write the sum of these coefficients:

\[
\sum\omega_{f_{nk},g}^{N}=\begin{cases}
0 & g^{\min}=\alpha\\
1 & g^{\min}=\alpha+1\\
0 & g^{\min}=\beta\text{ and }\beta>\alpha+1\\
-1 & g^{\min}=\beta+1.
\end{cases}
\]

We now add the diagonal part coefficients, for $s\omega_{\alpha\beta}(g)=\sum\omega_{f_{nk},g}^{N}+\sum\omega_{f_{nk},g}^{D}$,
so 
\begin{enumerate}
\item If $g=f_{i,\alpha+1}$, then the sum of non diagonal coefficients
is $1$. We have seen that in this case the sum of diagonal coefficients
is $-1$, and therefore $s\omega_{\alpha\beta}(g)=0$. 
\item If $g=g_{i,\beta}$ and $\beta=\alpha+1$, just like in 1., it is
$s\omega_{\alpha\beta}(g)=0$. 
\item If $g=f_{i,\beta+1}$ then $s\omega_{\alpha\beta}(g)=-1$. 
\item If $g=f_{n+k-\alpha,k}$ then $s\omega_{\alpha\beta}(g)=-1$. 
\item For any other $g\in\mathcal{B}_{std}$, $s\omega_{\alpha\beta}(g)=0$. 
\end{enumerate}
This completes the proof of part 1. of the Lemma. Part 2. is similar,
using the symmetries $x_{ij}\longleftrightarrow x_{ji}$ (and therefore
$f_{ij}\longleftrightarrow f_{ji}$), and $\alpha\longleftrightarrow\beta.$ 
\end{proof}

\section*{Acknowledgments}

The author was supported by ISF grant \#162/12. The author thanks
Michael Gekhtman for his helping comments and answers. Special thanks
to Alek Vainshtein for his support and encouragement, as well as his
mathematical, technical and editorial advices.

\pagebreak{}

\bibliographystyle{abbrv}
\bibliography{ECSBDdms}

\end{document}